\documentclass[10pt]{article}
\usepackage{amsbsy}
\usepackage{amsmath,amsfonts,amssymb,amsthm,mathrsfs,mathtools,stmaryrd}
\usepackage[a4paper,vmargin={3.5cm,3.5cm},hmargin={2.5cm,2.5cm}]{geometry}
\usepackage[font=sf, labelfont={sf,bf}, margin=1cm]{caption}
\usepackage{graphicx,graphics,xcolor}
\usepackage{epsfig}
\usepackage{latexsym}
\usepackage[applemac]{inputenc}
\linespread{1.2}
\usepackage{ae,aecompl}
\usepackage{soul}
\usepackage[english]{babel}
\usepackage[pdfpagemode=UseNone,bookmarksopen=false,colorlinks=true,urlcolor=blue,citecolor=blue,citebordercolor=blue,linkcolor=blue]{hyperref}
\usepackage{tikz,mathtools}
\usepackage{enumerate}

\usepackage{bbm,tikz-cd,ulem}
\makeatletter
\newcommand{\fixed@sra}{$\vrule height 2\fontdimen22\textfont2 width 0pt\shortuparrow$}
\newcommand{\newsearrow}{%
  {\!\mathrel{\text{\rotatebox[origin=c]{\numexpr 5*45}{\fixed@sra}}\!}}
}
\makeatother



\newcommand{\map}{\mathfrak{m}}
\newcommand{\emap}{\mathfrak{e}}

\newcommand{\rootface}{f_{\mathrm{r}}}

\def\bc{\widehat{\beta}}
\def\gc{\widehat{\gamma}}

\newcommand{\faces}{\mathsf{Faces}}

\newcommand{\Z}{\mathbb{Z}}
\renewcommand{\P}{\mathbb{P}}
\newcommand{\E}{\mathbb{E}}

\def\U{\mathbb{U}}
\def\N{\mathbb{N}}
\def\e{{\rm e}}
\def\d{{\rm d}}

\newcommand{\R}{\mathbb{R}}

\renewcommand\Pr[1]{\mathbb{P}\left(#1\right)}

\newtheorem{theorem}{Theorem}[section]

\newtheorem{proposition}[theorem]{Proposition}
\newtheorem{lemma}[theorem]{Lemma}
\newtheorem{corollary}[theorem]{Corollary}

\theoremstyle{definition}
\newtheorem{remark}[theorem]{Remark}

\def\build#1_#2^#3{\mathrel{
\mathop{\kern 0pt#1}\limits_{#2}^{#3}}}

\title{\vspace{-2cm}\textsc{Martingales in self-similar growth-fragmentations and their\\
 connections with random planar maps } \vspace{0.5cm}}

\date{}

\DeclareSymbolFont{extraup}{U}{zavm}{m}{n}
\DeclareMathSymbol{\varheart}{\mathalpha}{extraup}{86}
\DeclareMathSymbol{\vardiamond}{\mathalpha}{extraup}{87}

\makeatletter
\renewcommand*{\@fnsymbol}[1]{\ensuremath{\ifcase#1\or \spadesuit \or \varheart \or \vardiamond \or \clubsuit\or \or
   \mathsection\or \mathparagraph\or \|\or **\or \dagger\dagger
   \or \ddagger\ddagger \else\@ctrerr\fi}}
\makeatother

\author{Jean Bertoin \thanks{Universit\"at Z\"urich.\hfill  \texttt{jean.bertoin@math.uzh.ch}} \qquad  Timothy Budd \thanks{Niels Bohr Institute, University of Copenhagen \& IPhT, CEA, Universit\'e Paris-Saclay \hfill \texttt{timothy.budd@ipht.fr}} \qquad  Nicolas Curien \thanks{Universit\'e Paris-Sud.\hfill  \texttt{nicolas.curien@gmail.com}}  \qquad Igor Kortchemski\thanks{CNRS \& CMAP, \'Ecole polytechnique. \hfill \texttt{igor.kortchemski@normalesup.org}}}

\begin{document}
\normalem
\maketitle

\begin{abstract}
The purpose of the present work is twofold. 
First, we develop the theory of general self-similar growth-fragmentation processes by focusing on martingales which appear naturally in this setting and  by recasting classical results for branching random walks in this framework. In particular, we establish  many-to-one formulas  for growth-fragmentations and define the notion of intrinsic area of a growth-fragmentation. Second,  we identify a distinguished family of growth-fragmentations closely related to stable L\'evy processes, which are then shown to arise as the scaling limit of the perimeter process in Markovian explorations of certain random planar maps with large degrees (which are, roughly speaking, the dual maps of the stable maps of Le Gall \& Miermont \cite{LGM09}). As a consequence of this result, we are able to identify the law of the intrinsic area of these distinguished growth-fragmentations. This generalizes a geometric connection between large Boltzmann triangulations and a certain growth-fragmentation process, which was established in \cite{BCK}. 
\end{abstract}

\begin{figure}[!h]
 \begin{center}
 \includegraphics[height=7.5cm]{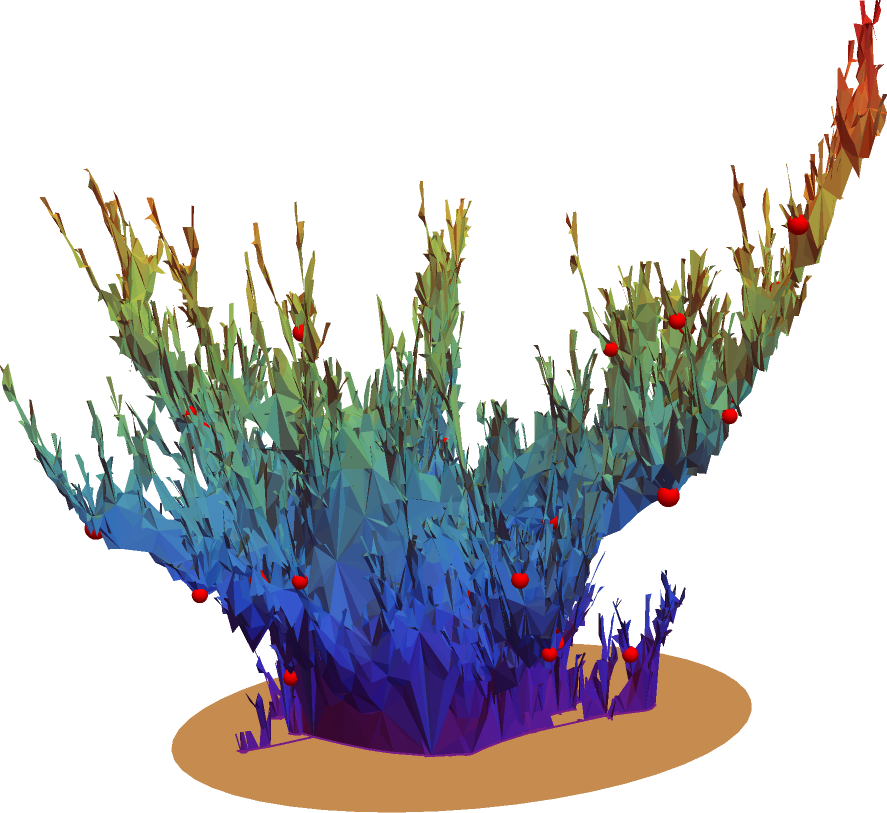}
  \includegraphics[height=7.5cm]{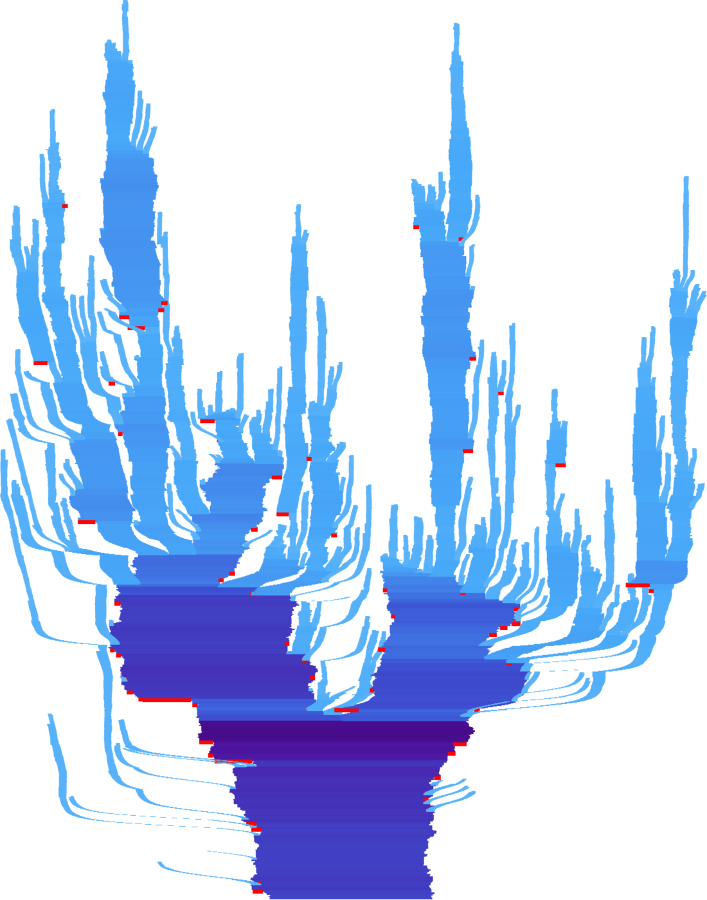}
 \caption{Left: A cactus representation of a random planar map with certain vertices of high degrees (which are the red dots), where the height of a vertex is its distance to the orange boundary. Right: A simulation of the growth-fragmentation process describing the scaling limit of its perimeters at heights (the red part corresponds to positive jumps of the process).}
 \end{center}
 \end{figure}

\section{Introduction}

\subsection{Motivation and overview}

The realm of scaling limit theorems for discrete models of  random planar geometry has attracted a considerable interest in the last two decades.  The initial purpose for this work is to deepen a  remarkable connection that has been pointed at recently in \cite{BCK} between certain random planar maps and a specific self-similar growth-fragmentation  (roughly speaking, a growth-fragmentation describes the evolution of masses of particles which can grow, melt and split as time passes; see below for details). We also refer to \cite{LG17,MS15} for related results

The main theorem in \cite{BCK} states an invariance principle for the rescaled process of the collection of  lengths of the cycles obtained by slicing at heights a random Boltzmann triangulation with a simple boundary whose size tends to infinity. The limit is given by a self-similar growth-fragmentation related to the completely asymmetric stable L\'evy process with index $3/2$.  
 In this work, we identify a one-parameter family of stable L\'evy processes with index $\theta\in(1,3/2)$,  
 which are related to a remarkable family of self-similar
 growth-fragmentations  $\left (\mathbf{{X}}^{(1-\theta)}_{\theta}\right )_{1 < \theta < 3/2}$. In turn the latter are shown to be the scaling limits  of lengths of the cycles obtained by slicing at heights random planar maps with large degrees.

 This can be seen as a first step to the construction of a family of random surfaces that generalize the Brownian map. Indeed,  Bettinelli and Miermont \cite{BetMie} have established that the free Brownian disk arises as the scaling limit of Boltzmann
quadrangulations with a boundary, which strongly suggests that the free Brownian disk  could also be constructed from the self-similar growth-fragmentation appearing in \cite{BCK}. 
We believe that the growth-fragmentation $\mathbf{{X}}^{(1-\theta)}_{\theta}$ should code similarly a so-called \emph{free stable disk}.  Further, just as the  Brownian map can be obtained as the limit of  Brownian disks when the size of the boundary tends to $0$, we believe that stable disks may be described by the limit of the growth-fragmentation $\mathbf{{X}}^{(1-\theta)}_{\theta}$ started from a single particle with size tending to $0$. 
 In particular, this would lead to examples of natural continuum random surface models with other scaling exponents than those of the Brownian map. 
Roughly speaking, when scaling distances by a factor $C$ in the Brownian map, the area measure is multiplied by $C^{4}$ while lengths of outer boundaries of metric balls are multiplied by $C^{2}$. In the case of a stable sphere with $1<\theta<3/2$, we expect that $C^{4}$ will be replaced with $C^{ (\theta+1/2)/(\theta-1)}$
 and $C^{2}$ with   $C^{ 1/(\theta-1)}$.

 The Brownian map, disk, or plane, are random metric spaces which come equipped with an intrinsic volume measure (here we shall rather call this an area measure, as it is related to a planar object). One can therefore naturally ask:  how can this area measure be recovered from this growth-fragmentation?  The naive answer that, just as for smooth surfaces, it should be simply derived from the product of the length of the cycles and the height, turns out to be wrong: the latter yields a measure that is not even locally finite.
One of the contributions of this work is to point out that  the area measure can be constructed for fairly general self-similar growth-fragmentations by relying on their branching structure, and more specifically on a so-called intrinsic martingale. This is an object of fundamental importance in the theory of branching random walks, see e.g. the lecture notes by Z. Shi \cite{ZShi} and references therein. We will actually see that the branching structure naturally yields a pair of martingales. In the case $\theta=3/2$, the first one is related to the area measure of the free Brownian disk, while the second is related via a change of probability to the Brownian plane, another fundamental random metric space. 
 In general, it seems to be an interesting and difficult question to identify the law of the terminal value of an intrinsic martingale. In this work, we establish  the  remarkable fact that total area of the distinguished growth-fragmentation $\mathbf{{X}}^{(1-\theta)}_{\theta}$ mentioned above is an inverse-size-biased stable distribution, by using the fact that $\mathbf{{X}}^{(1-\theta)}_{\theta}$ arises as a scaling limit of discrete objects (we have so far not been able to establish directly this fact without using random planar maps).

Let us now describe more precisely the content of this work.

\subsection{Description of the main results}

 Our approach to self-similar growth-fragmentations applies quite generally, without any {\it a priori} connection to  planar random geometry, and will thus first be developed in a general setting.  This then enables us to  identify a remarkable  one-parameter family of self-similar growth-fragmentations related to stable and hypergeometric L\'evy processes. We shall finally establish a geometric connection between this family and the stable maps of Le Gall and Miermont \cite{LGM09}, that generalizes the one in \cite{BCK} alluded to above.

\paragraph{Self-similar growth-fragmentations.} Markovian growth-fragmentation processes and cell systems have been introduced in \cite{BeGF}
to model  branching systems of cells where, roughly speaking, sizes of cells may vary as time passes and then suddenly divide into a mother cell and a daughter cell.  The  sum of the sizes of the mother and of its daughter immediately after a division event always  equals the size of the mother cell immediately before division.
Further, daughter cells evolve independently one of the others, and follow stochastically the same dynamics as the mother; in particular, they give birth in turn to granddaughters, and so on. Cell systems focus on the genealogical structure of cells, and a growth-fragmentation is then simply obtained as the  process of the family of the sizes of cells observed at a given time.
  We stress that  division events may occur instantaneously, in the sense that on every arbitrarily small time interval, a cell may generate infinitely many daughter cells who can then have arbitrarily small sizes. Division events thus correspond to negative jumps of the mother cell, and even though it was natural in the setting of  \cite{BeGF} to assume that the process describing the size of a typical cell  had no positive jumps, the applications to random
maps that we have in mind incite us to consider here more generally processes which may have jumps of both signs. Only the negative jumps correspond to division events, whereas the possible positive jumps play no role in the genealogy. The latter are only part of the evolution of processes and may be interpreted as a sudden macroscopic growth.

In this work, we  only consider self-similar growth-fragmentations, and to simplify we will write \emph{growth-fragmentation} instead of \emph{self-similar growth-fragmentation}. To start with, we shall recall the construction of a cell system from a positive self-similar Markov process (Sec.~\ref{sec:cellsystem}). 

\paragraph{Intrinsic martingales.}  An important feature is that
the point process of the logarithm of the sizes of cells at birth at a given generation forms a branching random walk. This yields a pair of genealogical martingales,  $({\mathcal M}^+(n), n \geq 0)$ and $({\mathcal M}^-(n), n \geq 0)$, which arise naturally as  intrinsic  martingales associated with that branching random walk (Sec.~\ref{sec:genealogical}). Using classical results of Biggins \cite{Biggins}, we observe that 
$ {\mathcal M}^+$ converges to $0$ a.s. whereas  ${\mathcal M}^-$ is uniformly integrable; the terminal value  of the latter is  interpreted as an intrinsic area.

We then introduce the growth-fragmentation as the process of the sizes of the cells at a given time (Sec.~\ref{sec:ssgf}). We show that its intensity measure can be expressed in terms of the distribution of an associated positive self-similar Markov process via a many-to-one formula (Theorem \ref{T1} in Sec.~\ref{sec:intensity}). Using properties of positive self-similar Markov processes, and in particular the fundamental connection with L\'evy processes due to Lamperti, we then arrive at a pair of (super-)martingales $M^+(t)$ and $M^-(t)$ indexed by continuous time (Sec.~\ref{sec:temporal}) and which are naturally related  to the two discrete parameter martingales, ${\mathcal M}^+(n)$ and ${\mathcal M}^-(n)$.

Our main purpose in Sect.~\ref{sec:spine1} is to describe explicitly the dynamics of growth-fragmentations under the probability measures which are obtained by tilting the initial one with these intrinsic martingales. Using the well-known spinal decomposition for branching random walks, we show (Theorems \ref{T2} and \ref{T9}) that the latter can be depicted by a modified cell system, in which, roughly speaking, the evolution of all the cells is governed by the same  positive self-similar Markov process, except for the Eve cell that follows a different self-similar Markov process (which, for a negative self-similarity parameter, survives forever in the case of ${\mathcal M}^+$ and is continuously absorbed in $0$ in the case of ${\mathcal M}^-$).
 We believe that the intrinsic martingales, many-to-one formulas, spinal decompositions ... which are developed here should have applications in the study of fine properties of self-similar growth-fragmentations (see e.g.~\cite{QShi,Dad17}).

\paragraph{A distinguished one-parameter family of growth-fragmentations.}

Once this is done, in Sec.~\ref{sec:gen} we give different ways to identify the law of a self-similar growth-fragmentation  through its cumulant function (Theorem \ref{thm:kappaphi}). Indeed, by \cite[Theorem 1.2]{QShi}, the law of a self-similar growth-fragmentation is characterized by a pair $(\kappa,\alpha)$, where $\kappa$ is the so-called cumulant function (see Eq.\,\eqref{eqkappa}) and $\alpha$ is the self-similarity parameter. We obtain in particular that the  law of a self-similar growth-fragmentation  is characterized by the distribution of the process describing the evolution of the  Eve cell under the modified probability measure obtained by tilting the initial one with either ${\mathcal M}^+$ or ${\mathcal M}^{-}$, which may be  a result of independent result.

Using this observation, we exhibit a distinguished family of growth-fragmentations which are closely related to $\theta$-stable L\'evy processes for $\theta \in ( \frac{1}{2}, \frac{3}{2}]$. They are described by a one-parameter family of  cumulant functions $\left(\kappa_{\theta}\right)_{ 1/2 < \theta \leq 3/2}$ given by
$$\kappa_{\theta}(q) = \frac{\cos(\pi(q-\theta)) }{\sin(\pi(q-2\theta))} \cdot \frac{\Gamma (q-\theta)}{\Gamma (q-2\theta)}, \qquad \theta<q<2\theta+1.$$
The growth-fragmentation with cumulant function $\kappa_{\theta}$ and self-similarity parameter $-\theta$ has the property that the evolution of the Eve cell obtained by tilting the dynamics by the martingale ${\mathcal M}^+$ (resp.~${\mathcal M}^-$) is  the $\theta$-stable L\'evy process with positivity parameter $\rho$ satisfying 
$$ \theta\cdot (1- \rho)  = \frac{1}{2},$$ 
killed when entering $(-\infty,0]$ and conditioned to survive forever (resp.~to be absorbed continuously at $0$). 
In the special case $\theta= {3}/{2}$, we recover the growth-fragmentation process without positive jumps  that appears in \cite{BCK}. This also gives an alternative way to see that $\theta=3/2$ plays a special role in the work of Miller \& Sheffield \cite{MS15} (see Remark \ref{rem:MS}).

\paragraph{Connections with random planar maps.}
The final part of the paper (Sec.~\ref{sec:maps}) establishes a connection between the  distinguished  growth-fragmentations with cumulant function $\kappa_{\theta}$ (for $\frac{1}{2} <\theta<\frac{3}{2}$) and a family of random planar maps that we now describe. Let  $ \mathbf{q}= (q_{k})_{ k \geq 1}$  be a non-zero sequence of non-negative numbers. We define a measure ${\tt w}$ on the set of all rooted bipartite planar maps by the formula
$${\tt w}(\map) := \prod_{f\in\faces(\map)} q_{\deg(f)/2},$$
where $\mathsf{Faces}( \map)$ denotes the set of all faces of a rooted bipartite planar map $\map$ and $\deg(f)$ is the degree of a face $f$  (see Sec.~\ref{sec:boltzmann} for precise definitions). Following \cite{LGM09}, we assume that $ \mathbf{q}$ is admissible, critical, non-generic and satisfies 
$$ q_{k} \underset{k \to \infty}{\sim} c\cdot \gamma^{k-1} \cdot k^{-\theta-1}$$ for certain $c, \gamma >0$ and $\theta \in ( \frac{1}{2}, \frac{3}{2})$. We then denote  by $B^{(\ell)}$ a random bipartite planar map  chosen proportionally to ${\tt w}$ and conditioned to have a root face of degree $2\ell$ (the root face is the face incident to the right of the root edge) and write $B^{(\ell),\dagger}$ for the \emph{dual} map of $B^{(\ell)}$ 
{(the vertices of $ B^{(\ell),\dagger}$ correspond to faces of $B^{(\ell)}$, and two vertices of $ B^{(\ell),\dagger}$ are adjacent if the corresponding faces are adjacent in  $B^{(\ell)}$)}. Our assumption implies, roughly speaking, that $B^{(\ell), \dagger}$ has vertices of large degree. The geometry of these maps has recently been analyzed in \cite{BCgrowth}.

We establish  that the growth-fragmentations with cumulant function $\kappa_{\theta}$ appear as the scaling limit  of perimeter processes in Markovian explorations (in the sense of \cite{Bud15}) in $ B^{(\ell), \dagger}$. In the regime $1<\theta<3/2$ (the so-called dilute phase) this connection takes a more geometrical form and we prove that the growth-fragmentation with cumulant function $\kappa_{\theta}$ and self-similarity parameter $1-\theta$, which we denote by $ \mathbf{X}_{\theta}^{(1-\theta)}$, describes the scaling limit of the perimeters of cycles obtained by slicing at all heights the map $B^{(\ell),\dagger}$ as $\ell \rightarrow \infty$ (Theorem \ref{thm:GF}).  This extends \cite{BCK}, where this was shown in the case of random triangulations with the growth-fragmentation $ \mathbf{X}_{ 3/2}^{(-1/2)}$.

As an application, we identify the law of the intrinsic area of the growth-fragmentation $ \mathbf{X}_{\theta}^{(1-\theta)}$ as a biased stable distribution (Corollary \ref{cor:area}). We also emphasize that the general study of  growth-fragmentations, and in particular the spinal decomposition, are used in an essential way to establish this result (see the proof of Proposition \ref{prop:scalingll}). The approach we follow is simpler and more general than the one in \cite{BCK}.

\bigskip

In the dilute case $\theta \in ( 1, \frac{3}{2}]$, as previously mentioned, we believe that these growth-fragmentations describe the breadth-first search of a one-parameter family of continuum random surfaces which includes the Brownian Map; these random surfaces should be the scaling limit of the dual maps of large random planar maps sampled according to their ${\tt w}$-weight and conditioned to be large (see \cite{BCgrowth} for more details about the geometry of these maps). These questions will be addressed in a future work.

\paragraph{Acknowledgments.} NC and IK acknowledge partial support from Agence Nationale de la Recherche, grant number ANR-14-CE25-0014 (ANR GRAAL), ANR-15-CE40-0013 (ANR Liouville) and from the City of Paris, grant ``Emergences Paris 2013, Combinatoire \`a Paris''. 
TB acknowledges support from the ERC-Advance grant 291092, ``Exploring the Quantum Universe'' (EQU). Finally, we would like to thank two anonymous referees for  useful comments.

\tableofcontents

\section{Cell systems and genealogical martingales}  
In this section, we start by recalling the construction of a cell system and then dwell on properties which will be useful in this work. As it was mentioned in the Introduction, we shall actually work with a slightly more general setting than in \cite{BeGF}, allowing cell processes to have positive jumps. It can be easily checked that the proofs of results from  \cite{BeGF} that we shall need here and which  were established under the assumption of absence of positive jumps, work just as well when positive jumps are allowed. 

The building block for the construction  consists in a positive self-similar Markov process $X=(X(t))_{t\geq 0}$, which either is absorbed  after a finite time at some cemetery point $\partial$ added to the positive half-line $(0,\infty)$, 
  or converges to $0$ as $t\to \infty$. We first recall the classical representation due to Lamperti \cite{Lam72}, which enables us to  view $X$ as the exponential of a  L\'evy process up to a certain time-substitution. We then construct cell systems based on a self-similar Markov process and derive some consequences which will be useful to our future  analysis. In particular, we point at a pair of intrinsic martingales which play a crucial role in our study.

\subsection{Self-similar Markov processes and their jumps}

\label{sec:lamperti}
Consider a quadruple $(\sigma^2,b,\Lambda,{\tt k})$, 
where $\sigma^2\geq 0$, $b\in\R$, ${\tt k}\geq 0$  and $\Lambda$ is a measure on $\R$ such that 
$\int(1\wedge y^2)\Lambda(\d y)<\infty$ and\footnote{The assumption $\int_{y>1}\e^{y}\Lambda(\d y)<\infty$ may be replaced by the weaker assumption that $\int_{y>1}\e^{ qy}\Lambda(\d y)<\infty$ for a certain $q>0$, but then a cutoff should be added to $q(1-\e^y)$, such as e.g.~$q(1-\e^y) \mathbbm{1}_{y \leq 1}$ in  \eqref{eqpsi}.} $\int_{y>1}\e^{y}\Lambda(\d y)<\infty$. Plainly, the second integrability requirement is always fulfilled when the support of $\Lambda$ is bounded from above, and in particular when $\Lambda$ is carried on $\R_-$.

The formula
\begin{equation}\label{eqpsi}
\Psi(q) \coloneqq -{\tt k}+ \frac{1}{2}\sigma^2q^2 + bq +\int_{\R}\left( \e^{qy}-1+q(1-\e^y)\right) \Lambda(\d y)\,,\qquad q\geq 0,
\end{equation}
is a slight variation of the  L\'evy-Khintchin formula; it
defines a convex function with values in $(-\infty, \infty]$ which we view as the Laplace exponent of 
a real-valued L\'evy process  $\xi=(\xi(t), t\geq0)$, 
where the latter is  killed at rate ${\tt k}$ when ${\tt k}>0$. Specifically, we have
\begin{equation}\label{EqLap}
E(\exp(q\xi(t)))=\exp(t\Psi(q))\qquad \hbox{for all $t,q\geq 0$}, 
\end{equation}
with the convention that $\exp(q\xi(t))=0$ when $\xi$ has been killed before time $t$ (we may think that $-\infty$ serves as cemetery point for $\xi$).  
We furthermore assume that either the killing rate ${\tt k}$ of $\xi$ is positive, or that  $\Psi'(0+)\in[-\infty,0)$; that is,  equivalently, that $\Psi$ remains strictly negative in some right-neighborhood of $0$. Recall that this is also the necessary and sufficient condition for $\xi$ either to have
 a finite lifetime, or  to drift to $-\infty$ in the sense that $\lim_{t\to \infty} \xi(t)=-\infty$ a.s.
 The case $\Lambda((-\infty,0))= 0$ when the process $X$ has no negative jumps until it dies, will be uninteresting for our purposes and thus implicitly excluded from now on.
 
Next, following Lamperti \cite{Lam72},  we fix some $\alpha\in\R$ and  define 
\begin{equation}\label{Eqcht}
\tau_t\,\coloneqq\,\inf\left\{r\geq0: \int_{0}^{r}\exp(-\alpha \xi(s))\d s\geq
t\right\}\,,\qquad t\geq 0.
\end{equation}
For every $x>0$, we write $P_x$ for the distribution of the time-changed process
$$X(t)\,:=\,x\exp\left\{\xi\left({\tau_{tx^{\alpha}}}\right)\right\}\,,\qquad
t\geq0,$$
with the convention that $X(t)=\partial$ for $t\geq \zeta\coloneqq x^{-\alpha}\int_0^{\infty}\exp(-\alpha \xi(s))\d s$. 
Then $X=(X(t), t\geq 0)$ is both (sub-)Markovian and self-similar,  in the sense that for every $x>0$,
\begin{equation}\label{scale}
\hbox{ the law of $\left(xX(x^{\alpha}t), t\geq0\right)$ under
${P}_1$ is ${P}_{x}$.}
\end{equation}
We shall refer here to $X$ as the self-similar Markov process with characteristics $(\sigma^2,b,\Lambda, {\tt k}, \alpha)$,
or simply $(\Psi, \alpha)$.  
We stress that in our setting, either $X$ is absorbed at the cemetery point $\partial$ after a finite time, or it converges to $0$ as time goes to infinity.

The {\it negative} jumps of $X$ will have an important role in this work, 
and it will be convenient to adopt throughout this text the notation 
$$\Delta_{\hbox{-}} Y(t)\coloneqq \left\{
\begin{matrix} Y(t)-Y(t-) &\hbox{ if } Y(t)<Y(t-),\\
0 &\hbox{ otherwise}&
\end{matrix}\right.$$
for every c\`adl\`ag real-valued process $Y$. 
 We next set for $q\geq0$
\begin{eqnarray}\label{eqkappa}
\kappa(q)&\coloneqq &\Psi(q)+\int_{(-\infty,0)}(1-\e^{y})^q\Lambda(\d y) \nonumber \\
&=& -{\tt k}+ \frac{1}{2}\sigma^2q^2 + bq +\int_{\R}\left( \e^{qy}-1+q(1-\e^y)+ {\mathbbm{1}}_{\{y<0\}}(1-\e^{y})^q\right) \Lambda(\d y).
\end{eqnarray}
Plainly, $\kappa: \R_+ \to (-\infty, \infty]$ is a convex function; observe also $\kappa(q)<\infty$ if and only if both $\Psi(q)<\infty$ and $\int_{(-\infty,0)}(1-\e^{y})^q\Lambda(\d y)<\infty$, and further that  the condition $\int_{(-\infty,0)}(1\wedge y^2)\Lambda(\d y)<\infty$ entails that $\int_{(-\infty,0)}(1-\e^{y})^q\Lambda(\d y)<\infty$ for every $q\geq 2$. 
 We call $\kappa$ the {\em cumulant function}; it plays a major role in the study of self-similar growth-fragmentations through the following calculation done in  \cite[Lemma 4]{BeGF}:
  \begin{eqnarray}\label{eq:berGFlem4} E_x\left(\sum_{0<s<\zeta}|\Delta_{\hbox{-}} X(s)|^{q}\right)= \left\{ \begin{array}{cc}  \displaystyle x^q\left(1-\frac{\kappa(q)}{\Psi(q)}\right) & \mbox{ if } \Psi(q)<0 \mbox{ and }\kappa (q) < \infty\\ \infty & \mbox{ otherwise. } \end{array}\right.\end{eqnarray}
Actually,  only the case when $X$ has no positive jumps is considered in \cite{BeGF}, however the arguments  there works just as well when $X$ has also positive jumps.

By convexity, the function $\kappa$ has at most two roots. We shall assume throughout this work
that there exists $\omega_+>0$ such that $\kappa(q)<\infty$ for all $q$ in some neighborhood of $\omega_+$ and 
\begin{equation} \label{omega+}
\kappa(\omega_{+})=0 \quad \hbox{and}\quad \kappa'(\omega_{+})>0.
\end{equation}
This forces
\begin{equation}\label{condkappa}
\inf_{q\geq 0} \kappa(q)<0,
\end{equation}
and since $\Psi\leq \kappa$, \eqref{condkappa}  is a stronger requirement than $\Psi(0)=-{\tt k}<0$ or $\Psi'(0+)<0$ that we previously made. Further, note that in the case when $\Psi(q)<\infty$ for all $q>0$ (which holds for instance whenever
the support of the L\'evy measure $\Lambda$ is bounded from above) and the L\'evy process $\xi$ is not the negative of a subordinator, then the Laplace exponent $\Psi$ is ultimately increasing, so $\lim_{q\to \infty} \kappa(q)=\infty$, and \eqref{condkappa} ensures \eqref{omega+}. See \cite{BS16} for a study of the case where $\kappa(q)>0$ for every $q \geq 0$.

We record for future use the following elementary facts (see Lemma 3.1 in \cite{BeWa} for closely related calculations) about the cumulant function shifted at its root $\omega_+$, namely
$$\Phi^+(q)\coloneqq \kappa(q+\omega_{+}), \qquad q\geq 0.$$

\begin{lemma}\label{Lphi+} \begin{enumerate}
\item[(i)] The function $\Phi^+$ can be expressed in a L\'evy-Khintchin form similar to \eqref{eqpsi}. More precisely,   the killing rate is $0$, the Gaussian coefficient $\frac{1}{2}\sigma^2$, and the L\'evy measure $\Pi$ given by 
$$\Pi(\d x)\coloneqq \e^{x\omega_+}\left(\Lambda(\d x) + \tilde \Lambda(\d x)\right)\,,\qquad x\in \R,$$
where $\tilde \Lambda(\d x)$ denotes the image of ${\mathbbm{1}}_{x<0}\Lambda(\d x)$ by the map $x\mapsto \ln(1-\e^x)$. 
\item[(ii)] Therefore $\Phi^+$ is the Laplace exponent of a L\'evy process, say $\eta^+=(\eta^+(t), t\geq 0)$. Further $\Phi^+(0)=0$, $(\Phi^+)'(0)=\kappa'(\omega_+)>0$,  and thus $\eta^+$ drifts to $+\infty$.
\end{enumerate}
\end{lemma}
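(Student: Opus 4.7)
The overall strategy for (i) is direct algebraic manipulation. Since $\kappa(\omega_+)=0$ I work with $\Phi^+(q) = \kappa(q+\omega_+) - \kappa(\omega_+)$, which clears the killing-rate contribution and reduces the task to matching the quadratic, linear, and integral pieces of a L\'evy--Khintchine form. The Gaussian piece is immediate: $\tfrac{1}{2}\sigma^2[(q+\omega_+)^2-\omega_+^2] = \tfrac{1}{2}\sigma^2 q^2 + \sigma^2\omega_+ q$, so the Gaussian coefficient is preserved and a $\sigma^2\omega_+q$ contribution is absorbed into the drift.

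The bulk of the work is on the two $\Lambda$-integrals in \eqref{eqkappa}. For the first, I would split
$$\e^{(q+\omega_+)y}-\e^{\omega_+ y}+q(1-\e^y) = \bigl[\e^{(q+\omega_+)y}-\e^{\omega_+ y}-qy\mathbbm{1}_{|y|<1}\bigr] + q\bigl[y\mathbbm{1}_{|y|<1}+(1-\e^y)\bigr]$$
(both pieces integrate absolutely against $\Lambda$: the first is $O(y^2)$ near $0$ and bounded at $\pm\infty$; the second is $O(y^2)$ near $0$, $O(1)$ at $-\infty$, and controlled at $+\infty$ by $\int_{y>1}\e^y\Lambda(\d y)<\infty$), then further rewrite the first bracket as $[\e^{qy}-1-qy\mathbbm{1}_{|y|<1}]\e^{\omega_+ y} + qy\mathbbm{1}_{|y|<1}(\e^{\omega_+ y}-1)$ to expose an $\e^{\omega_+ y}\Lambda$-L\'evy--Khintchin integral plus a finite linear-in-$q$ correction. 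For the second $\Lambda$-integral, the substitution $\tilde x = \ln(1-\e^y)$ sends $y\in(-\infty,0)$ bijectively onto $\tilde x \in (-\infty,0)$ and pushes $\mathbbm{1}_{y<0}\Lambda(\d y)$ forward to $\tilde\Lambda(\d\tilde x)$; under it, $(1-\e^y)^{q+\omega_+}-(1-\e^y)^{\omega_+} = \e^{\omega_+ \tilde x}(\e^{q\tilde x}-1)$, and the same bounded-compensator trick turns the result into $\int[\e^{q\tilde x}-1-q\tilde x\mathbbm{1}_{|\tilde x|<1}]\e^{\omega_+\tilde x}\tilde\Lambda(\d\tilde x)$ plus another finite linear-in-$q$ correction. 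Regrouping yields
$$\Phi^+(q) = \tfrac{1}{2}\sigma^2 q^2 + b^{\#}q + \int_{\R}\bigl[\e^{qx}-1-qx\mathbbm{1}_{|x|<1}\bigr]\Pi(\d x), \qquad \Pi = \e^{\omega_+ x}(\Lambda+\tilde\Lambda),$$
which is a L\'evy--Khintchine representation of the shape claimed (``similar to \eqref{eqpsi}'').

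The main technical obstacle is ensuring absolute convergence of every intermediate piece, since naive splittings of the original expression can produce divergent integrals (e.g.\ $\int(1-\e^y)\Lambda(\d y)$ or $\int\tilde\Lambda$). Using the bounded compensator $y\mathbbm{1}_{|y|<1}$ throughout keeps Taylor remainders $O(y^2)$ near $0$ and terms bounded at infinity. It remains to check that $\Pi$ is a bona fide L\'evy measure. For the $\e^{\omega_+ x}\Lambda$ part, $\int(1\wedge x^2)\e^{\omega_+ x}\Lambda(\d x)$ is finite because near $0$ we have $x^2\Lambda$-integrability, on $(-\infty,-1)$ the weight is bounded by $1$ and $\Lambda((-\infty,-1))<\infty$, and on $(1,\infty)$ the bound $\int_{x>1}\e^{\omega_+ x}\Lambda(\d x)<\infty$ follows from $\Psi(\omega_+)<\infty$. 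For the $\e^{\omega_+ x}\tilde\Lambda$ part, the same substitution shows $\int(1\wedge x^2)\e^{\omega_+ x}\tilde\Lambda(\d x)\leq \int_{y<0}(1-\e^y)^{\omega_+}\Lambda(\d y)<\infty$, finite because $\kappa(\omega_+)<\infty$.

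For (ii), part (i) identifies $\Phi^+$ as the Laplace exponent of a L\'evy process $\eta^+$, with zero killing rate (the killing rate being the constant in the L\'evy--Khintchine representation, which equals $\Phi^+(0) = \kappa(\omega_+) = 0$). Differentiating the representation at $q=0$ gives $(\Phi^+)'(0) = \kappa'(\omega_+)$, strictly positive by \eqref{omega+}. Since $(\Phi^+)'(0) = \mathbb{E}[\eta^+(1)] \in (0,\infty)$, the strong law of large numbers for L\'evy processes yields $\eta^+(t)/t \to \kappa'(\omega_+) > 0$ almost surely as $t\to\infty$, so $\eta^+$ drifts to $+\infty$.
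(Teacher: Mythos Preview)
Your proof is correct. The paper does not actually prove this lemma: it records it as an elementary fact and refers the reader to Lemma~3.1 in \cite{BeWa} for closely related calculations. Your direct algebraic verification --- subtracting $\kappa(\omega_+)=0$, isolating the Gaussian and drift pieces, rewriting the $\Lambda$-integral as a L\'evy--Khintchin integral against $\e^{\omega_+ y}\Lambda(\d y)$ up to a drift correction, and pushing forward the $(1-\e^y)^{q+\omega_+}-(1-\e^y)^{\omega_+}$ term via the substitution $\tilde x=\ln(1-\e^y)$ --- is precisely the natural route and is what one would do to flesh out the cited reference. Your attention to absolute convergence of intermediate pieces and to checking that $\Pi$ is a bona fide L\'evy measure is appropriate and complete.

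One minor remark worth recording: you end with the standard truncation compensator $qx\mathbbm{1}_{|x|<1}$ rather than the $q(1-\e^x)$ compensator used in \eqref{eqpsi}. This is the correct choice, since the latter form would require $\int_{x>1}\e^{(1+\omega_+)x}\Lambda(\d x)<\infty$, which is not assumed in general (cf.\ the footnote attached to \eqref{eqpsi}). The paper's phrasing ``similar to \eqref{eqpsi}'' accommodates exactly this.
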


We shall also sometimes consider the case when the equation $\kappa(q)=0$ possesses a second solution, which we shall then denote by $\omega_-$.
More precisely, we will say that Cram\'er's hypothesis holds when
\begin{equation} \label{omega-}
\hbox{ there exists $\omega_-< \omega_+$  such that } \kappa(\omega_{-})=0 \quad \hbox{and}\quad \kappa'(\omega_{-})>-\infty
\end{equation}
(note that $\kappa'(\omega_{-})<0$  by convexity of $\kappa$).

We are now conclude this section by introducing a first noticeable martingale, which is a close relative to the more important martingales to be introduced later on, and already points at the central role of the cumulant  function $\kappa$ and its roots. 

\begin{proposition} \label{P0}
Let $\omega$ be any root of the equation $\kappa(\omega)=0$. Then for every $x>0$, the process
$$X(t)^{\omega}+\sum_{0<s\leq t, s<\zeta}|\Delta_{\hbox{-}} X(s)|^{\omega}\,, \qquad t\geq 0,$$
(with the usual implicit convention that $X(t)^{\omega}=0$ whenever $t\geq \zeta$) is a uniformly integrable martingale under $P_x$, with terminal value $\sum_{0<s<\zeta}|\Delta_{\hbox{-}} X(s)|^{\omega}$.
\end{proposition}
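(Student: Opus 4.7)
The plan is to transport the problem to the Lévy process $\xi$ via the Lamperti representation, where the key identity becomes a transparent consequence of Itô's formula and the compensation formula, combined with the hypothesis $\kappa(\omega)=0$.

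First I would rewrite the statement in $\xi$-coordinates. Under $P_x$, we have $X(t)^{\omega}=x^{\omega}\exp(\omega\xi(\tau_{tx^{\alpha}}))$, and a negative jump of $X$ at time $s$ corresponds to a negative jump $\Delta\xi_{r}<0$ of $\xi$ at $r=\tau_{sx^{\alpha}}$ with $|\Delta_{\hbox{-}}X(s)|^{\omega}=x^{\omega}\e^{\omega\xi(r-)}(1-\e^{\Delta\xi_{r}})^{\omega}$. Setting
$$A(r)\coloneqq \sum_{u\leq r,\,\Delta\xi_{u}<0}\e^{\omega\xi(u-)}(1-\e^{\Delta\xi_{u}})^{\omega},\qquad N(r)\coloneqq \e^{\omega\xi(r)}+A(r),$$
the process in the statement equals $x^{\omega}N(\tau_{tx^{\alpha}})$, with the convention that $\e^{\omega\xi(r)}=0$ past the killing time of $\xi$ and $N(\infty)=A(\infty)$.

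Next I would establish that $N$ is a uniformly integrable martingale under the law of $\xi$. Itô's formula for the semimartingale $\e^{\omega\xi}$ gives that $\e^{\omega\xi(r)}-\Psi(\omega)\int_{0}^{r}\e^{\omega\xi(u-)}\d u$ is a local martingale, while the compensation formula for Poisson integrals, together with the identity
$$\int_{(-\infty,0)}(1-\e^{y})^{\omega}\Lambda(\d y)=\kappa(\omega)-\Psi(\omega)=-\Psi(\omega),$$
valid by $\kappa(\omega)=0$, yields that $A(r)+\Psi(\omega)\int_{0}^{r}\e^{\omega\xi(u-)}\d u$ is also a local martingale. Adding these shows $N$ is a local martingale with $N(0)=1$. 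Taking expectations of each piece gives $\E[\e^{\omega\xi(r)}]=\e^{r\Psi(\omega)}$ and $\E[A(r)]=(-\Psi(\omega))\int_{0}^{r}\e^{u\Psi(\omega)}\d u=1-\e^{r\Psi(\omega)}$, noting that $\Psi(\omega)<0$ since the restriction of $\Lambda$ to $(-\infty,0)$ is non-trivial. Hence $\E[N(r)]=1$ for every $r$, so $N$ is a true (nonnegative) martingale. As $r\to\infty$, either $\xi(r)\to-\infty$ or $\xi$ is killed, so $\e^{\omega\xi(r)}\to 0$ almost surely; since $A$ is non-decreasing with $\E[A(\infty)]=1$, monotone convergence gives $A(r)\to A(\infty)$ in $L^{1}$. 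Thus $N(r)\to A(\infty)$ almost surely and $\E[N(\infty)]=1=\E[N(0)]$, which forces uniform integrability of the nonnegative martingale $N$.

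Finally I would transfer back through the Lamperti time change. Since $\tau$ is a continuous increasing family of stopping times for the natural filtration of $\xi$ (with $\tau_{t}=\infty$ precisely for $t\geq\zeta$), optional stopping applied to the UI martingale $N$ implies that $r\mapsto N(\tau_{r})$ is a UI martingale for the time-changed filtration, with terminal value $N(\infty)=A(\infty)$; multiplying by $x^{\omega}$ gives exactly the process and terminal value claimed in the statement. The main obstacle I anticipate is keeping the bookkeeping consistent when the cell is killed rather than continuously converging to $0$, specifically making sure that the convention $\e^{\omega\xi}\equiv 0$ after the killing time matches the fact that the sum in the statement runs strictly over $s<\zeta$; this is a matter of carefully invoking the compensation formula only for the genuine jump part of $\xi$, the killing contributing nothing to $A$.
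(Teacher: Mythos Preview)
Your argument is correct. The decomposition $N(r)=\e^{\omega\xi(r)}+A(r)$ into an exponential martingale piece with drift $\Psi(\omega)$ and a compensated jump functional with drift $\kappa(\omega)-\Psi(\omega)=-\Psi(\omega)$ is clean, and the cancellation of drifts when $\kappa(\omega)=0$ is exactly the point. Your verification that $\E[N(r)]\equiv 1$, the almost-sure and $L^1$ convergence $N(r)\to A(\infty)$, and the optional-stopping transfer through the Lamperti time change are all sound; the one subtlety you flag (killing versus drifting to $-\infty$) is handled by the convention $\e^{\omega\xi}=0$ after the lifetime, which is precisely the convention the paper adopts.

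The paper takes a shorter route. It invokes directly the identity \eqref{eq:berGFlem4} (cited from \cite{BeGF}) which gives $E_x\big(\sum_{0<s<\zeta}|\Delta_{\hbox{-}} X(s)|^{\omega}\big)=x^{\omega}$ whenever $\kappa(\omega)=0$, and then applies the Markov property of $X$ at time $t$: conditionally on $X(t)=y$ with $t<\zeta$, the expectation of the remaining jump sum equals $y^{\omega}$, which writes the process in the statement as the conditional expectation of the integrable terminal variable given $\mathcal{F}_t$. This is a two-line proof because the analytic work has been outsourced to the cited lemma. Your proof, by contrast, essentially re-derives that lemma in the special case $\kappa(\omega)=0$ via It\^o and compensation on the $\xi$-side, and then replaces the Markov-property step by optional stopping of a UI martingale through the time change. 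Your approach is more self-contained and makes the mechanism (drift cancellation from $\kappa(\omega)=0$) fully explicit; the paper's approach is quicker but relies on an external reference.
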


\begin{proof}For $q=\omega$ a root of $\kappa$, the right-hand side  of \eqref{eq:berGFlem4} reduces to $x^{\omega}$, and an application of the Markov property at time $t$ yields our claim.
\end{proof}

\subsection{Cell systems and branching random walks} 
\label{sec:cellsystem}

We next introduce the notion of cell system and related canonical notation. We use the Ulam tree ${\mathbb U}=\bigcup_{n\geq 0}\N^n$  where $ \mathbb{N}= \{1,2, \ldots\}$ to encode the genealogy of a family of cells which evolve and split as time passes. We define a cell system as a family ${\mathcal X}\coloneqq ({\mathcal X}_u, u\in\U)$, where each ${\mathcal X}_u=\left({\mathcal X}_u(t)\right)_{t\geq 0}$ should be thought of as the size of the cell
labelled by $u$ as a function of its {\it age} $t$. The system  also implicitly encodes the birth-times $b_u$ of those cells. 

Specifically, each ${\mathcal X}_u$ is a c\`adl\`ag trajectory with values in $(0,\infty)\cup\{\partial\}$, 
which fulfills the following properties:
\begin{itemize}
\item  $\partial$ is an absorbing state, that is ${\mathcal X}_u(t)=\partial$ for all $t\geq \zeta_u\coloneqq \inf\{t\geq 0:{\mathcal X}_u(t)=\partial\}$,
\item either $\zeta_u<\infty$ or $\lim_{t\to\infty} {\mathcal X}_u(t)=0$. 
\end{itemize}

We should think of $\zeta_u$ as the lifetime of the cell $u$, and stress that $\zeta_u=0$ (that is
${\mathcal X}_u(t)\equiv \partial$) and $\zeta_u=\infty$ (that is $ {\mathcal X}_u(t)\in (0,\infty)$ for all $t\geq 0$) are both allowed. The {\it negative} jumps of cells will play a specific part in this work. 
The second condition above ensures that  for every given $\varepsilon >0$, the process ${\mathcal X}_u$ has at most finitely many negative  jumps of absolute sizes greater than $\varepsilon$. This enables us to enumerate the sequence of the positive jump  sizes and times of $-{\mathcal X}_{u}$ in the decreasing lexicographic order, say 
 $(x_1,\beta_1), (x_2,\beta_2), \ldots$, that is either  $x_i=x_{i+1}$ and then $\beta_i>\beta_{i+1}$, or $x_i>x_{i+1}$. In the case when ${\mathcal X}_{u}$ has only a finite number of negative jumps, say $n$, then we agree that $x_i=\partial$ and $\beta_i=\infty$ for all $i>n$. The third condition that we impose on cell systems, is that the sequence
 of negative  jump sizes and times of a cell $u$ encodes the birth-times and sizes at birth of its children $\{ui:i\in\N\}$. That is:
 \begin{itemize}
 \item for every $i\in\N$, the birth time $b_{ui}$ of the cell $ui$ is given by $b_{ui}= b_u+\beta_i$, and ${\mathcal X}_{ui}(0)=x_i$.
 \end{itemize} 
In words, we interpret the negative jumps of ${\mathcal X}_{u}$ as birth events of the cell system, each jump of size $-x<0$ corresponding to the birth of a new cell with initial size $x$, and daughter  cells are enumerated in the decreasing order of the sizes at birth. Note that if ${\mathcal X}_{u}$ has only finitely many negative jumps, says $n$, then
${\mathcal X}_{ui}\equiv \partial$ for all $i>n$. 

We now introduce for every $x>0$ a probability distribution for cell systems, denoted by ${\mathcal P}_x$,  which is 
of Crump-Mode-Jagers  type and can be described recursively as follows.  The Eve cell,  ${\mathcal X}_{\varnothing}$, has the law $P_x$ of the self-similar Markov process $X$ with characteristics $(\Psi,\alpha)$.  
Given ${\mathcal X}_{\varnothing}$, the processes of the sizes  of cells at the first generation, ${\mathcal X}_i=({\mathcal X}_i(s), s\geq 0)$ for $ i\in\N$, has the distribution of a sequence of independent processes with respective laws $P_{x_i}$, where $x_1\geq x_2\geq \ldots\ > 0$ denotes the ranked sequence of the positive jump  sizes of $-{\mathcal X}_{\varnothing}$. We continue in an obvious way for the second generation, and so on for the next generations; we refer to Jagers \cite{Jagers} for the rigorous  argument showing that this indeed defines uniquely the law ${\mathcal P}_x$. It will be convenient for definiteness  to agree that ${\mathcal P}_{\partial}$ denotes the law of the degenerate process on $\U$ such that ${\mathcal X}_u\equiv \partial$ for every $u\in\U$, $b_{\varnothing}=0$ and $b_u=\infty$ for $u\neq \varnothing$. We shall also write ${\mathcal E}_x$ for the mathematical expectation under the law ${\mathcal P}_x$. So, the self-similar Markov process $X$ governs the evolution of typical cells under ${\mathcal P}_x$, 
and $X$ will thus be often referred to as a cell process in this setting.

It is readily seen from the self-similarity of cells and the branching property that the point process on $\R$ induced  by the negative of the logarithms of the initial sizes of cells at a given generation, 
$${\mathcal Z}_n(\d z)=\sum_{|u|=n}\delta_{-\ln {\mathcal X}_u(0)}(\d z)\,,\qquad n\geq 0,$$
is a {\em branching random walk} (of course, the possible atoms corresponding to ${\mathcal X}_u(0)=\partial$ are discarded in the previous sum, and the same convention shall apply implicitly in the sequel). Roughly speaking, this means that for each generation $n$, the point measure ${\mathcal Z}_{n+1}$ is obtained from ${\mathcal Z}_{n}$ by replacing each of its atoms, say $z$, by a random cloud of atoms, $\{z+y^z_i: i\in\N\}$, where the family $\{y^z_i: i\in\N\}$ has a fixed distribution, and to different atoms $z$ of ${\mathcal Z}_{n}$ correspond  independent families $\{y^z_i: i\in\N\}$. We stress that the Lamperti transformation has no effect on the sizes of the jumps (the sizes of the jumps of $X$ and of $x\exp(\xi)$ are obviously the same) and thus no effect either on the 
 branching random walk ${\mathcal Z}_{n}$. In particular the law of ${\mathcal Z}_{n}$ does not depend on the self-similarity parameter $\alpha$.

The Laplace transform of the intensity of the point measure ${\mathcal Z}_1$,
which is defined by 
$$m(q) \coloneqq {\mathcal E}_1\left(\langle {\mathcal Z}_1, \exp(-q\cdot)\rangle\right)={\mathcal E}_1\left(\sum_{i=1}^{\infty} {\mathcal X}^q_i(0)\right)\,, \qquad q\in\R,$$
plays a crucial role in the study of branching random walks. In our setting, it is computed explicitly in terms of the function $\kappa$ in \eqref{eq:berGFlem4} and equals  \begin{eqnarray}  
  \label{eq:lem0i} m(q) = 1 - \kappa(q)/\Psi(q) \quad  \mbox{ when }\Psi(q) <0 \mbox{ and }\kappa(q) < \infty  \end{eqnarray} and infinite otherwise. In particular, when $\kappa(q) <0$,  the structure of the branching random walk  yields that
 \begin{eqnarray}{\mathcal E}_1\left(\sum_{u\in\U} {\mathcal X}^q_u(0)\right)=\frac{\Psi(q)}{\kappa(q) }\label{eq:lem0ii}.  \end{eqnarray}

 The  Laplace transform of the intensity $m(q)$ opens the way to  additive martingales, and in particular intrinsic martingales. These  have a fundamental role in the study of branching random walks, in particular in connection with the celebrated spinal decomposition (see for instance the Lecture Notes by Shi \cite{ZShi} and references therein), and we shall especially be  interested in describing its applications to self-similar growth-fragmentations.

\subsection{Two genealogical martingales and the intrinsic area measure} 
\label{sec:genealogical}

We start this section by observing from \eqref{eq:lem0i} that there is the equivalence
$$\kappa(\omega)=0 \quad \Longleftrightarrow \quad  m(\omega)=1.$$
Recall that $\omega_+$ is the largest root of the equation $\kappa(\omega)=0$ and,  whenever Cram\'er's condition \eqref{omega-} holds, $\omega_-$ the smallest root; in particular
 $m'(\omega_{+})>0$ and $m'(\omega_{-})\in(-\infty,0)$. One refers to $\omega_-$ as the Malthusian parameter (see e.g.~\cite[Sec.~4]{Biggins}).

This points at a pair of remarkable martingales, as we shall now explain. 
We write ${\mathcal G}_n$ for the sigma-field generated by the cells with generation at most $n$, i.e.~${\mathcal G}_n=\sigma({\mathcal X}_u: |u|\leq n)$ and  ${\mathcal G}_{\infty}=\bigvee_{n\geq 0} {\mathcal G}_n$. Note that for  a node $u$ at generation $|u|=n\geq 1$, the initial value ${\mathcal X}_{u}(0)$ of the cell labeled by $u$ is measurable with respect to the cell ${\mathcal X}_{u-}$, where $u-$ denotes the parent of $u$ at generation $n-1$.
We introduce for every $n\geq 0$ the ${\mathcal G}_{n}$-measurable variables
$${\mathcal M}^+(n)\coloneqq \sum_{|u|=n+1} {\mathcal X}_{u}^{\omega_+}(0) \quad \text{and} \quad {\mathcal M}^-(n)\coloneqq \sum_{|u|=n+1} {\mathcal X}^{\omega_-}_{u}(0),$$
where for the second definition, we implicitly assume that \eqref{omega-} holds.

Since $m(\omega_{\pm})=1$,  ${\mathcal M}^+$ and ${\mathcal M}^-$  are two $({\mathcal G}_n)$-martingales. Further, since $m'(\omega_+)>0$  and $m'(\omega_-)<0$, 
it follows from results of Biggins (see Theorem A in \cite{Biggins}) that ${\mathcal M}^+$ converges to $0$ a.s. whereas ${\mathcal M}^-$ is uniformly integrable (see also Lemma \ref{L5'} below). 

We now assume throughout the rest of  this section that the Cram\'er's hypothesis \eqref{omega-} holds and gather some important properties of the martingale ${\mathcal M}^-$, which  is known as the {\em Malthusian martingale} or also the {\em intrinsic martingale} in the folklore of branching random walks.
Call the cell process $X$ geometric if its trajectories, say starting from $1$, take values in $\{r^z: z\in\Z\}$  a.s.  for some fixed $r>0$. By Lamperti's transformation, 
$X$ is geometric if and only the L\'evy process $\xi$ is lattice, i.e. lives in $c\Z$ for some $c>0$.

\begin{lemma}\label{L5'}  Assume that \eqref{omega-}. Then the following assertions hold:
\begin{enumerate}
\item[(i)] ${\mathcal M}^-(1)\in L^{\omega_+/\omega_-}({\mathcal P}_x)$. 
\item[(ii)] Suppose that the cell process $X$ is not geometric. We have
$${\mathcal P}_x({\mathcal M}^-(\infty)>t)  \quad \mathop{\sim}_{t \rightarrow \infty} \quad  c \cdot t^{-\omega_+/\omega_-}$$
for a certain constant $c>0$. In particular,  ${\mathcal M}^-(\infty)\in L^r({\mathcal P}_x)$ if and only if $r<\omega_+/\omega_-$. 
\end{enumerate}
\end{lemma}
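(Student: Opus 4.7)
My plan is to prove (i) directly using the Lamperti representation of the cell process together with a Burkholder--Davis--Gundy estimate, and then to deduce (ii) from the classical tail-asymptotic theory for solutions of the smoothing equation.

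For part~(i), I first reduce to the equivalent statement $\mathcal{M}^-(0)\in L^p(\mathcal{P}_x)$ with $p=\omega_+/\omega_-$. Indeed, conditional Jensen applied to $\mathcal{M}^-(0)=\mathcal{E}_x[\mathcal{M}^-(1)\mid\mathcal{G}_0]$ gives one direction, and the branching recursion $\mathcal{M}^-(1)=\sum_{|v|=1}\mathcal{X}_v^{\omega_-}(0)\,\mathcal{M}^-_{(v)}(0)$ (with $\mathcal{M}^-_{(v)}(0)$ i.i.d.\ copies of $\mathcal{M}^-(0)$ independent of $\mathcal{G}_0$) combined with the weighted Jensen estimate $(\sum_v a_v m_v)^p\le(\sum_v a_v)^{p-1}\sum_v a_v m_v^p$ yields $\mathcal{E}_x[\mathcal{M}^-(1)^p]\le\mathcal{E}_x[\mathcal{M}^-(0)^p]^2$, giving the converse. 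Via the Lamperti transformation, $\mathcal{M}^-(0)$ is a Poisson stochastic integral of $e^{\omega_-\xi(s-)}(1-e^{\Delta\xi(s)})^{\omega_-}$ against the jump measure of $\xi$ on $\{y<0\}$. I would decompose it as predictable compensator plus pure-jump martingale,
\[
\mathcal{M}^-(0)=-\Psi(\omega_-)\int_0^\zeta e^{\omega_-\xi(s)}\,ds+N_\zeta,
\]
using $\int_{y<0}(1-e^y)^{\omega_-}\Lambda(dy)=\kappa(\omega_-)-\Psi(\omega_-)=-\Psi(\omega_-)$. The exponential functional has finite $p$-th moment precisely because $\Psi(p\omega_-)=\Psi(\omega_+)<0$ (the Carmona--Petit--Yor criterion). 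Burkholder--Davis--Gundy together with the subadditivity of $x\mapsto x^{p/2}$ (valid when $p\le 2$; an iteration of the same scheme on $[N]_\zeta$ handles $p>2$) gives
\[
\mathcal{E}_x[N_\zeta^p]\le C_p\,\mathcal{E}_x\Bigl[\sum_{s<\zeta,\,\Delta\xi<0}e^{\omega_+\xi(s-)}(1-e^{\Delta\xi})^{\omega_+}\Bigr]=C_p,
\]
the last equality following from the compensation formula combined with $\kappa(\omega_+)=0$ (the same computation that appears in the proof of Proposition~\ref{P0}).

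For part~(ii), the terminal value $W:=\mathcal{M}^-(\infty)$ satisfies the smoothing fixed-point equation $W\stackrel{d}{=}\sum_i A_i\,W^{(i)}$ with weights $A_i=\mathcal{X}_i^{\omega_-}(0)$ and i.i.d.\ copies $W^{(i)}$ of $W$ independent of $(A_i)$. Applying Proposition~\ref{P0} at the two roots of $\kappa$ gives $\mathcal{E}_1[\sum_i A_i]=1=\mathcal{E}_1[\sum_i A_i^{\omega_+/\omega_-}]$, so $\alpha:=\omega_+/\omega_-$ is exactly the \emph{characteristic exponent} of this smoothing equation. The non-geometric assumption on $X$ entails that the law of $\log\mathcal{X}_i(0)$, interpreted on the spine under Biggins' size-biased change of measure, is non-arithmetic, which is the non-lattice hypothesis needed. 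The moment bound supplied by (i) provides the remaining regularity, so the classical Guivarc'h--Liu tail theorem for solutions of smoothing equations (see e.g.~\cite{ZShi}) delivers $\mathcal{P}_x(W>t)\sim c\,t^{-\alpha}$ as $t\to\infty$, and the $L^r$-integrability statement is then immediate.

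The main obstacle I anticipate lies in the careful bookkeeping needed to match our setting to the precise hypotheses of the smoothing-equation tail theorem---in particular, verifying that the non-geometric condition on $X$ really does translate into non-arithmeticity of the spine law, and that the moment bound from~(i) has exactly the form required by the general result.
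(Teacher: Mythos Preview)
Your proposal is correct and follows essentially the same route as the paper: decompose the sum of $\omega_-$-powers of negative jumps into its predictable compensator (an exponential functional of $\xi$, whose $L^{\omega_+/\omega_-}$-integrability follows from $\Psi(\omega_+)<0$) plus a purely discontinuous martingale, and control the latter via Burkholder--Davis--Gundy with the subadditivity/iteration trick; then for (ii) invoke the Kesten-type tail theorem for the smoothing equation, checking that $m(\omega_-)=m(\omega_+)=1$ and that (i) supplies the required moment condition. Two minor remarks: first, your reduction from $\mathcal{M}^-(1)$ to $\mathcal{M}^-(0)$ is a nice touch---the paper in fact directly bounds $S(\zeta-)=\mathcal{M}^-(0)$ and tacitly identifies this with $\mathcal{M}^-(1)$, so your extra step actually clarifies the statement; second, the paper appeals to Theorem~4.2 of Jelenkovi\'c--Olvera-Cravioto \cite{JeOl} rather than Guivarc'h--Liu, but these are the same implicit-renewal result, and the non-lattice hypothesis there is indeed exactly your non-geometric condition on $X$.
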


\begin{proof} (i) We start by observing that 
\begin{equation}\label{eqRiv}
\int_0^{\zeta} X^{\alpha+q}(s)\d s\in L^{\omega_+/q}(P_1)\qquad \hbox{for every }q\in(0,\omega_+]. 
\end{equation}
Indeed, Lamperti's transformation readily yields the identity
\begin{equation} \label{eq:chgtvr}
\int_0^{\zeta} X^{\alpha+q}(s)\d s=\int_0^{\infty} \e^{q\xi(t)}\d t.
\end{equation}
The finiteness of  its moment of order $\omega_+/q$ follows from Lemma 3 of Rivero \cite{Riv}, since 
$$E(\exp(\xi(1)\omega_+))=\exp(\Psi(\omega_+))<1.$$

Next, recall that, by construction, ${\mathcal M}^-(1)$ has the same law as $S(\zeta-)$, where
$$S(t)\coloneqq \sum_{0< s \leq t} |\Delta_{\hbox{-}} X(s)|^{\omega_-}\quad \hbox{ for }t\in[0,\zeta).$$
Using the Lamperti transformation, we easily see that the predictable compensator of $S$ is given by 
$$S^{(p)}(t)\coloneqq \int_0^{t} X^{\alpha +\omega_-}(s) \left( \int_{(-\infty,0)}(1-\e^{y})^{\omega_-}\Lambda(\d y)\right)  \d s,$$
and since $ \int_{(-\infty,0)}(1-\e^{y})^{q}\Lambda(\d y)=\kappa(q)-\Psi(q)$, we have simply that the process
$$S(t)-S^{(p)}(t) = \sum_{0< s \leq t} |\Delta_{\hbox{-}} X(s)|^{\omega_-}+\Psi(\omega_-)  \int_0^{t} X^{\alpha+\omega_-}(s)\d s$$
is a martingale. This martingale is obviously purely discontinuous and has quadratic variation
$$\sum_{0<s\leq t} |\Delta_{\hbox{-}} X(s)|^{2\omega_-}.$$

Using \eqref{eqRiv}, we know that  $S^{(p)}(\zeta-)\in L^{\omega_+/\omega_-}(P_1)$,  and  then the Burkholder-Davis-Gundy inequality reduces the proof to checking that 
\begin{equation} \label{eqBdG}
\sum_{0<s<\zeta} |\Delta_{\hbox{-}} X(s)|^{2\omega_-}\in L^{\omega_+/2\omega_-}(P_1).
\end{equation}
Suppose first $\omega_+/\omega_-\leq 2$, so that
$$\left(\sum_{0<s<\zeta} |\Delta_{\hbox{-}} X(s)|^{2\omega_-}\right)^{\omega_+/2\omega_-}\leq \sum_{0<s<\zeta} |\Delta_{\hbox{-}} X(s)|^{\omega_+}.$$
Since $\Psi(\omega_+)<\kappa(\omega_+)=0$, we know further from Lemma \ref{L1}  that 
$$E_1\left(\sum_{0<s<\zeta} |\Delta_{\hbox{-}} X(s)|^{\omega_+}\right) = -\Psi(\omega_+)E_1\left (\int_0^{\zeta} X^{\alpha+\omega_+}(s)\d s\right)=1,$$
which proves \eqref{eqBdG}. 

Suppose next that $2<\omega_+/\omega_-\leq 4$. The very same argument as above, with $2\omega_-$ replacing $\omega_-$, shows that 
$$\sum_{0< s \leq t} |\Delta_{\hbox{-}} X(s)|^{2\omega_-}+\Psi(2\omega_-)  \int_0^{t} X^{\alpha+2\omega_-}(s)\d s$$
is a purely discontinuous martingale with quadratic variation $\sum_{0<s\leq t} |\Delta_{\hbox{-}} X(s)|^{4\omega_-}$, and we conclude in the same way that \eqref{eqBdG} holds. We can repeat the argument when $2^k<\omega_+/\omega_-\leq 2^{k+1}$ for every $k\in\N$, and thus complete the proof of our claim by iteration.

(ii) The terminal value ${\mathcal M}^-(\infty)$ of the intrinsic martingale solves the critical homogeneous linear equation 
 \begin{eqnarray} \label{eq:pointfixevolume}{\mathcal M}^-(\infty)  \quad \mathop{=}^{(d)} \quad  \sum_{i=1}^{\infty} \Delta^{\omega_-}_i {\mathcal M}_i^-(\infty)\,, \end{eqnarray}
where in the right-hand side, $(\Delta_i)_{i\in\N}$ denotes the sequence of the absolute values of the negative jump sizes of the self-similar Markov process $X$ with characteristics $(\Psi,\alpha)$,
and $\left({\mathcal M}_i^-(\infty)\right)_{i\in\N}$ is a sequence of i.i.d. copies of ${\mathcal M}^-(\infty)$ which is further assumed to be independent of $(\Delta_i)_{i\in\N}$.

We infer in particular from results due to  Biggins (see Sec.~2 in \cite{Biggins}),  that this equation has a unique solution with given mean, and our claim is now  a consequence of Theorem 4.2 in \cite{JeOl}. Indeed,
$$\E\left(\sum_{i=1}^{\infty}\Delta_i^{\omega_-}\right)
=m(\omega_-)=1\quad 
, \quad \E\left(\sum_{i=1}^{\infty} \left(\Delta_i^{\omega_-}\right)^{\omega_+/\omega_-}\right) = \E\left(\sum_{i=1}^{\infty}\Delta_i^{\omega_+}\right)
=m(\omega_+)=1,$$
and also
$$\E\left(\left( \sum_{i=1}^{\infty}\Delta_i^{\omega_-}\right)^{\omega_+/\omega_-}\right) ={\mathcal E}_1({\mathcal M}_1(1)^{\omega_+/\omega_-})<\infty,$$
as it was proved in (i). 
\end{proof}

The terminal value ${\mathcal M}^-(\infty)$  will be referred here to as 
 the {\em intrinsic area} of the growth-fragmentation, appears in a variety of limit theorems; see, e.g. \cite{Nerman} and Theorem 6.1 in \cite{BiKy}. 
 
 \begin{remark}\label{rem:law} 
 Even though the law of ${\mathcal M}^-(\infty)$ is characterized by the distributional equation \eqref{eq:pointfixevolume}, it seems to be difficult to identify its distribution in general. However, we will be able to identify the law of a the intrinsic area of a distinguished family of growth-fragmentations by surprisingly using a connection with random planar maps.
 \end{remark}
 
 {In Sec.~\ref{sec:gen}, we will introduce a distinguished family of self-similar growth-fragmentations and identify the law of the limit of their Malthusian martingales as being size-biased versions of  stable random variable (see Corollary \ref{cor:area} below). To this end, we will crucially rely on a connection between these particular growth-fragmentations and random maps where in this context $ \mathcal{M}^-(\infty)$ is interpreted as the limit law for the \emph{area} (i.e.~the number of vertices) of the maps. Hence the name \emph{intrinsic area}.}

It is further well-known that the Malthusian martingale  yields an important random measure on the boundary $\partial \U$ of the Ulam tree; see e.g. Liu \cite{Liu} for background and references. Specifically, $\partial \U$ is a complete metric space  when endowed with the distance
$d(\ell, \ell')=\exp(-\inf\{n\geq 0: \ell(n)=\ell'(n)\})$, where the notation $\ell(n)$ designates the ancestor of the leaf $\ell$ at generation $n$. We can construct a (unique) random measure ${\mathcal A}$ on $\partial \U$, which we call the {\it intrinsic area measure}, such that the following holds. 
For every $u\in\U$,  we write $B(u)\coloneqq \{\ell \in\partial \U: \ell(n)=u\}$ for the ball in $\partial \U$ which stems from $u$, and introduce
$${\mathcal A}(B(u))\coloneqq \lim_{n\to \infty} \sum_{|v|=n, v(i)=u} {\mathcal X}^{\omega_-}_v(0)\,,$$
where $i=|u|$ stands for the generation of $u$, and for every vertex $v$ with $|v|\geq i$, $v(i)$ stands for the ancestor of $v$ at generation $i$ (the existence of this limit is ensured by the branching property). In particular, for $u=\varnothing$, the total mass of ${\mathcal A}$ is ${\mathcal A}(\partial \U)={\mathcal A}(B(\varnothing))={\mathcal M}^-(\infty)$. 
Alternatively, we can use in the same way  the Malthusian martingale to equip with a natural mass measure the continuum random tree associated by Rembart and Winkel  to the genealogy of  growth-fragmentations; see Corollary 4.2 in \cite{ReWi}.

\section{Self-similar growth-fragmentations and a many-to-one formula}
\label{sec:many-to-one}

\subsection{Self-similar growth-fragmentations}
\label{sec:ssgf}

The cell system ${\mathcal X}$ being constructed, we next define the cell population at time $t\geq 0$ as the family of  the sizes of the cells alive at time $t$, viz.
$${\bf X}(t)=\left\{ \!\!\left\{ {\mathcal X}_u(t-b_u): u\in\U\,,\, b_u\leq t< b_u+\zeta_u\right\}\!\!\right\},$$
where $b_u$ and $\zeta_u$ denote respectively the birth time and the lifetime of the cell $u$ and 
the notation $\left\{\!\!\left\{\cdots \right\}\!\!\right\}$ refers to multiset (i.e. elements are repeated according to their multiplicities).
According to Theorem 2 in \cite{BeGF} (again, the assumption of absence of positive jumps which was made there plays no role), \eqref{condkappa} ensures that the elements of ${\bf X}(t)$ can be ranked in the non-increasing order and then form a null-sequence (i.e. tending to $0$), say $X_1(t)\geq X_2(t)\geq \ldots\geq 0$; if ${\bf X}(t)$ has only finitely many elements, say $n$, then we agree that $X_{n+1}(t)=X_{n+2}(t)= \ldots =0$ for the sake of definiteness. Letting the time parameter $t$ vary, we call the process of cell populations $({\bf X}(t), t\geq 0)$ a {\it growth-fragmentation process} associated with the cell process $X$. 
We denote by $\P_x$ its law under ${\mathcal P}_x$.  

\begin{remark} By the above construction, the law of $ \mathbf{X}$ only depends on the law of the Eve cell  which in turn is characterized by $( \Psi, \alpha)$. However, different laws for the Eve cell may yield to the same growth-fragmentation process $ \mathbf{X}$. This has been analyzed in depth in \cite{QShi} where it is proved that the law of $ \mathbf{X}$ is in fact characterized by the pair $(\kappa, \alpha)$.  This reference only treats the case where positive jumps are not allowed, but the same arguments apply. 
\end{remark}

We now discuss the branching property of self-similar growth-fragmentations. In this direction, we first introduce 
$({\mathcal F}_t)_{t\geq 0}$, the natural filtration generated by $({\bf X}(t): t\geq 0)$, and recall from Theorem 2 in \cite{BeGF} that the growth-fragmentation process is Markovian with semigroup fulfilling the branching property. That is, conditionally on ${\bf X}(t)=(x_1, x_2, \ldots)$, the shifted process $({\bf X}(t+s): s\geq 0)$ is independent of ${\mathcal F}_t$ and its distribution is the same as that of the process obtained by
taking the union (in the sense of multisets) of a sequence of independent growth-fragmentation processes with respective distributions $\P_{x_1},\P_{x_2}, \ldots$. We stress that, in general, the genealogical structure of the cell system cannot be recovered from the growth-fragmentation process ${\bf X}$ alone, and this motivates working with the following enriched version. 

We introduce 
$$\bar{\bf X}(t)=\left\{ \!\!\left\{ ({\mathcal X}_u(t-b_u),|u|): u\in\U\,,\, b_u\leq t< b_u+\zeta_u\right\}\!\!\right\}, \qquad t\geq 0$$
that is we record not just the family of the sizes of the cells alive at time $t$, but also their generations. We denote  the natural filtration generated by $(\bar {\bf X}(t): t\geq 0)$ by  $(\bar{\mathcal F}_t)_{t\geq 0}$. 
In this enriched setting, it should be intuitively clear that the following slight variation of the branching property still holds. A rigorous proof can be given following an argument similar to that for Proposition 2 in \cite{BeGF}; details are omitted.

\begin{lemma}\label{L6} For every $t\geq 0$, conditionally on $\bar{\bf X}(t)=\left\{ \!\!\left\{ (x_i,n_i): i\in\N\right\}\!\!\right\}$, the shifted process $(\bar{\bf X}(t+s): s\geq 0)$ is independent of $\bar{\mathcal F}_t$ and its distribution is the same that of the process 
$$\bigsqcup _{i\in\N}\bar {\bf X}_i(s)\circ \theta_{n_i},\qquad s\geq 0,$$
where $\bigsqcup$ denotes the union in the sense of multisets,  
 $\theta_{n}$  the operator that consists in shifting all generations by $n$, viz. $\left\{ \!\!\left\{ (y_j, k_j): j\in \N \right\}\!\!\right\}\circ \theta_n=\left\{ \!\!\left\{ (y_j, k_j+n): j\in \N \right\}\!\!\right\}$, and the $\bar {\bf X}_i$ are independent processes, each having the same law as $\bar {\bf X}$ under ${\mathcal P}_{x_i}$.
\end{lemma}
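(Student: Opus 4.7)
The plan is to prove the enriched branching property by first establishing a stronger statement at the level of the full cell system and then projecting down. Concretely, I would introduce a richer sigma-algebra $\tilde{\mathcal{G}}_t$ generated by the restricted cell processes $({\mathcal X}_u(s))_{u\in\U,\,0\leq s\leq (t-b_u)_+\wedge \zeta_u}$ together with the birth times $b_u$ of all cells born before $t$. Plainly $\bar{\mathcal F}_t\subset \tilde{\mathcal{G}}_t$, so once the branching property is established conditionally on $\tilde{\mathcal{G}}_t$ it will transfer to $\bar{\mathcal F}_t$ by a tower argument, provided the conditional law of $(\bar{\bf X}(t+s),s\geq 0)$ depends only on the multiset $\bar{\bf X}(t)=\{\!\!\{(x_i,n_i)\}\!\!\}$, which is $\bar{\mathcal F}_t$-measurable.

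Next, I would enumerate the cells alive at time $t$, i.e. those $u\in\U$ with $b_u\leq t<b_u+\zeta_u$, and for each such cell set $\tau_u := t-b_u$ and $y_u := {\mathcal X}_u(\tau_u)$. The heart of the argument is to apply the ordinary Markov property of the cell process $X$ to each ${\mathcal X}_u$ at its $\tilde{\mathcal{G}}_t$-measurable age $\tau_u$. The key observation is that $\tau_u$ is a function only of the ancestors of $u$ (through their negative jumps) and does not involve ${\mathcal X}_u$ itself; one can therefore first condition on the ancestral data, so that $\tau_u$ becomes a fixed number and the Markov property of $X={\mathcal X}_u$ at time $\tau_u$ applies. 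Combined with the recursive Crump--Mode--Jagers definition of ${\mathcal P}_x$, this yields that, conditionally on $\tilde{\mathcal{G}}_t$, the sub-cell-system rooted at $u$ and consisting of $({\mathcal X}_u(\tau_u+s))_{s\geq 0}$ together with all yet-unborn descendants of $u$ is distributed as a full cell system under ${\mathcal P}_{y_u}$. Independence across distinct alive cells is automatic from the CMJ construction, since the post-$t$ subtrees stemming from different alive cells are disjoint and built from fresh independent randomness.

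Each such sub-cell-system generates an enriched growth-fragmentation $\bar{\bf X}_u$ distributed as $\bar{\bf X}$ under ${\mathcal P}_{y_u}$, and because the root of the sub-system sits at generation $|u|=n_u$ in the original Ulam tree, every descendant appears with its generation shifted by $n_u$, which is precisely the action of the operator $\theta_{n_u}$. Taking the multiset union over all cells alive at $t$ reconstructs $(\bar{\bf X}(t+s))_{s \geq 0}$ and yields the formula $\bigsqcup_i \bar{\bf X}_i(s)\circ\theta_{n_i}$. Since this conditional distribution depends on $\tilde{\mathcal{G}}_t$ only through the multiset $\{\!\!\{(y_u,n_u):u\text{ alive at }t\}\!\!\}=\bar{\bf X}(t)$, the projection to $\bar{\mathcal F}_t$ gives the stated branching property.

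The main technical obstacle is the rigorous handling of the random age $\tau_u$ when invoking the Markov property of $X$ for each alive cell $u$; this is precisely the step that is carried out in the proof of Proposition 2 in \cite{BeGF}, and the argument is insensitive to whether $X$ has positive jumps or not, since the Lamperti transformation and the CMJ recursion make no use of the sign of the jumps of $\xi$ other than the distinction between negative jumps (which encode births) and the rest (which only contribute to the cell's trajectory).
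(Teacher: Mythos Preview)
Your proposal is correct and aligns with the paper's own treatment: the paper does not give a detailed proof but simply states that ``a rigorous proof can be given following an argument similar to that for Proposition 2 in \cite{BeGF}; details are omitted.'' Your sketch fleshes out exactly that argument---passing to a richer filtration recording the restricted cell trajectories, applying the Markov property of $X$ to each alive cell at its $\tilde{\mathcal G}_t$-measurable age, using the CMJ recursion for independence, and projecting back to $\bar{\mathcal F}_t$---and you correctly identify both the key technical step and the reference handling it.
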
 

It is easy to see that ${\mathcal P}_x$-a.s., two different cells never split at the same time, that is for every $u,v\in\U$ with $u\neq v$, there is no $t\geq0$ with 
$b_u\leq t$, $b_v\leq t$ such that both $\Delta_{\hbox{-}} {\mathcal X}_u(t-b_u) <0$ and $\Delta_{\hbox{-}} {\mathcal X}_v(t-b_v) <0$. 
Further, there is an obvious correspondence between the negative jumps of cell-processes and those of $\bar{\bf X}$. Specifically, if for some $u\in\U$ and  $t>b_u$, 
${\mathcal X}_u((t-b_u)-)=a$ and ${\mathcal X}_u(t-b_u)=a'<a$, then at time $t$, one element $(a,|u|)$ is removed from $\bar{\bf X}(t-)$ and is replaced by a two elements $(a',|u|)$ and 
$(a-a', |u|+1)$. In the converse direction, the Eve cell process ${\mathcal X}_{\varnothing}$ is recovered from $\bar{\bf X}$ by following the single term with generation $0$, and this yields the birth times and sizes at birth of the cells at the first generation. Next, following the negative jumps of the elements in $\bar{\bf X}$ at the first generation, then the second generation, and so on, we can recover the family of the birth times and sizes at birth of cells at any given generation from the observation of the process $\bar{\bf X}$.

\subsection{The intensity measure of a self-similar growth-fragmentation}
\label{sec:intensity}

The main object of interest in the present section is
the intensity measure $\mu^x_t$ of ${\bf X}(t)$ under $\P_x$, which is defined by
$$\langle \mu^x_t, f\rangle \coloneqq \E_x\left(\sum_{i=1}^{\infty} f(X_i(t))\right),$$
with, as usual, $f: (0,\infty)\to [0,\infty)$ a generic measurable function and the convention $f(0)=0$. We shall obtain an explicit expression for $\mu_t^x$ in terms of the transition kernel of a certain self-similar Markov process. Formulas of this type are often referred to as {\it many-to-one} in the literature branching type processes (see e.g. the Lecture Notes by Shi \cite{ZShi}), to stress that the intensity of a random point measure is expressed in terms of the distribution of a single particle.  We stress however that,  even though self-similar growth-fragmentation processes are Markovian processes which fulfill the branching property (see Theorem 2 in \cite{BeGF}), their construction based on cell systems ${\mathcal X}$ and their genealogical structures of Crump-Mode-Jagers type make the analysis of $({\bf X}(t))_{ t\geq 0}$ as a process evolving with time $t$ rather un-direct, and the classical methods for establishing many-to-one formulas for branching processes do not apply straightforwardly in our setting.

In order to state the main result of this section, recall  that $\omega_+$ is the largest root of the equation $\kappa(q)=0$ and fulfills \eqref{omega+}. 
Recall also from Lemma \ref{Lphi+} that $\Phi^+(\cdot)=\kappa(\cdot +\omega_+)$ is the Laplace exponent of a L\'evy process $\eta^+$.
With the notation of Sec.~\ref{sec:lamperti},  we write $Y^+=(Y^+(t): t\geq0)$ for the self-similar Markov process with characteristics $(\Phi^+, \alpha)$, and denote by $P^+_x$   its law started from $Y^+(0)=x$,

\begin{theorem}\label{T1} For every $x>0$ and $t\geq 0$, there is the identity
$$\mu_t^x(\d y)=\left(\frac{x}{y}\right)^{\omega_{+}}P^+_x(Y^+(t)\in\d y)\,,\qquad y>0.$$
\end{theorem}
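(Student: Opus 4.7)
The plan is to prove the formula via a spine-type construction on the cell system. Introduce the nonnegative functional
$$M^+(t) := \sum_i X_i(t)^{\omega_+},$$
show it is a $(\mathcal F_t)$-martingale under $\mathcal P_x$ with $\mathcal E_x[M^+(t)] = x^{\omega_+}$, tilt $\mathcal P_x$ by $M^+(t)/x^{\omega_+}$ into a probability $\tilde{\mathcal P}_x$ on $\mathcal F_t$, and under $\tilde{\mathcal P}_x$ select a distinguished ``spine'' cell at time $t$ with probability proportional to $X_i(t)^{\omega_+}$. Writing $Y_t$ for the size of the spine, a direct size-biasing identity gives
$$\mathcal E_x\!\left[\sum_i g(X_i(t))\right] = \mathcal E_x\!\left[\sum_i X_i(t)^{\omega_+}\cdot\frac{g(X_i(t))}{X_i(t)^{\omega_+}}\right] = x^{\omega_+}\,\tilde{\mathcal E}_x\!\left[\frac{g(Y_t)}{Y_t^{\omega_+}}\right],$$
so the theorem reduces to showing that $Y_t$ under $\tilde{\mathcal P}_x$ has the same law as $Y^+(t)$ under $P_x^+$.

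\medskip

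The martingale property of $M^+$ comes from iterating Proposition \ref{P0} together with the enriched branching property of Lemma \ref{L6}. Applied to the Eve cell at $\omega=\omega_+$, Proposition \ref{P0} gives
$$\mathcal X_\varnothing(t)^{\omega_+}\mathbf{1}_{t<\zeta_\varnothing}+\sum_{i\colon b_{\varnothing i}\leq t}\mathcal X_{\varnothing i}(0)^{\omega_+} \;=\; x^{\omega_+}+\text{martingale},$$
and applying the same identity inductively to every daughter at its birth time shows that the truncated quantity
$$\sum_{|u|\leq n,\,b_u\leq t<b_u+\zeta_u}\mathcal X_u(t-b_u)^{\omega_+}+\sum_{|u|=n+1,\,b_u\leq t}\mathcal X_u(0)^{\omega_+}$$
has expectation exactly $x^{\omega_+}$ for each $n$. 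As $n\to\infty$, the first sum increases to $M^+(t)$ by monotone convergence, while the second is bounded by the intrinsic martingale $\mathcal M^+(n)$ restricted to cells born before time $t$ and vanishes almost surely; a uniform-integrability argument (using that only finitely many generation-$(n{+}1)$ cells can be born by time $t$) promotes this to $\mathcal E_x[M^+(t)]=x^{\omega_+}$, and the Markov and branching property of $\bar{\mathbf X}$ upgrade it to the full martingale statement.

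\medskip

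The crux is identifying the spine. Starting with the Eve cell as the initial spine, the spine follows the underlying cell process $X$ up to a negative jump of the current spine cell: at a jump taking the mother from $\mathcal X(s-)$ to $\mathcal X(s-)-\delta$ and giving birth to a cell of initial size $\delta$, the size-biasing makes the spine stay with the mother or switch to the newborn with probabilities proportional to $(\mathcal X(s-)-\delta)^{\omega_+}$ and $\delta^{\omega_+}$ respectively. Translated through the Lamperti representation, the log of the spine's size (prior to the time change) is a L\'evy process whose jump intensity is the sum of two contributions: $\e^{\omega_+ y}\Lambda(\d y)$ from staying with the mother, and the pushforward of $\mathbf{1}_{y<0}(1-\e^y)^{\omega_+}\Lambda(\d y)$ under $y\mapsto\log(1-\e^y)$ from switching to the newborn. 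Their sum is exactly the L\'evy measure $\Pi$ of Lemma \ref{Lphi+}(i); since the Gaussian part and drift are plainly preserved by the tilt and the Lamperti time change uses the same self-similarity exponent $\alpha$, the spine has law $P_x^+$, and consequently $Y_t$ is distributed as $Y^+(t)$.

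\medskip

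I expect the main technical obstacle to be Step~3: one has to justify not merely the marginal distribution of the spine at time $t$, but that the spine process itself has law $P_x^+$ jointly in time, which is essentially the content of the full spinal decomposition (Theorems \ref{T2}--\ref{T9} below). Some care is also needed in Step~1 to pass from the truncated identity to its limit. A more direct alternative, avoiding any explicit spine, would be to write a Feynman--Kac-type recursion for $\mu_t^x$ from the cell decomposition and self-similarity, and then verify using Lemma \ref{Lphi+} that $(x/y)^{\omega_+}P_x^+(Y^+(t)\in\d y)$ satisfies the same recursion with the same initial condition at $t=0^+$; either way the key algebraic input is the identification of the L\'evy measure $\Pi$ with the two-part jump mechanism above.
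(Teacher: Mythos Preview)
Your spine approach is genuinely different from the paper's proof, which is analytic: the paper computes the Mellin transform of the potential $\int_0^\infty \langle\mu_t^1,y^{q+\alpha}\rangle\,\d t=-1/\kappa(q)$ directly from the genealogical sum \eqref{eq:lem0ii} and Lemma~\ref{L1}(i), then takes a super-harmonic transform of $\mu_t^x$ by a power $y^\theta$ with $\kappa(\theta)<0$ to obtain a bona fide sub-Markovian self-similar kernel, and finally identifies that kernel via Lemma~\ref{L1}(ii). No martingale or spine enters. The paper does remark afterwards that combining Proposition~\ref{P2} and Theorem~\ref{T2} recovers Theorem~\ref{T1}, so your route is acknowledged as a valid alternative---but the paper goes through the genealogical martingale $\mathcal M^+(n)$, not the temporal one $M^+(t)$.

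That distinction is exactly where your proposal has a real gap. Your Step~1 asserts $\mathcal E_x[M^+(t)]=x^{\omega_+}$ for all $\alpha$, but this is false when $\alpha>0$: by Corollary~\ref{C3}(i), $M^+$ is then a strict supermartingale converging to $0$ in $L^1$. Concretely, in your truncated identity
$$\mathcal E_x\Bigl[\sum_{|u|\leq n,\ b_u\leq t<b_u+\zeta_u}\mathcal X_u(t-b_u)^{\omega_+}\Bigr]+\mathcal E_x\Bigl[\sum_{|u|=n+1,\ b_u\leq t}\mathcal X_u(0)^{\omega_+}\Bigr]=x^{\omega_+},$$
the second expectation does \emph{not} tend to $0$ as $n\to\infty$ when $\alpha>0$; indeed $\lim_n \mathcal E_x[\,\cdot\,]=x^{\omega_+}P^+_x(Y^+(t)=\partial)>0$, because the tagged lineage explodes in finite time. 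Your justification (``only finitely many generation-$(n{+}1)$ cells can be born by time $t$'') is incorrect whenever $\Lambda((-\infty,0))=\infty$, which is the generic situation here. So the tilting by $M^+(t)/x^{\omega_+}$ does not even define a probability on $\mathcal F_t$ in that regime, and your size-biasing identity as written gives only an inequality.

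The fix is to run the spine through generations rather than time: tilt by the genealogical martingale $\mathcal M^+(n)$ (which is a martingale for every $\alpha$, since the branching random walk $\mathcal Z_n$ does not see the Lamperti time-change), tag the leaf as in \eqref{eqlam}, and then prove Proposition~\ref{P2} and Theorem~\ref{T2} as the paper does; the many-to-one formula drops out even when $\alpha>0$. Your description of the spine dynamics and the identification of the L\'evy measure $\Pi$ via the two jump contributions is correct and is precisely the content of Lemma~\ref{Lphi+}(i) and the proof of Theorem~\ref{T2}, but it must be grounded on the discrete tilt, not the temporal one.
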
 

\begin{proof} 
We start by observing that for every $q$ such that $\kappa(q)<0$, we have 
\begin{equation} \label{eq:mommu}
\int_0^{\infty}  \int_{(0,\infty)} y^{q+\alpha} \mu^1_t (\d y)  \d t = \E_1\left(\int_0^{\infty} \left( \sum_{i=1}^{\infty} X_i^{q+\alpha}(t)\right) \d t \right) = -1/\kappa(q).
\end{equation}
Indeed, due to the very construction of the growth-fragmentation process ${\bf X}$, there is the identity
$$\int_0^{\infty} \left( \sum_{i=1}^{\infty} X_i^{q+\alpha}(t)\right) \d t = \sum_{u\in\U} \int_0^{\zeta_u}{\mathcal X}^{q+\alpha}_u(t) \d t.$$
Because for each $u\in\U$, conditionally on ${\mathcal X}_u(0)=x$, ${\mathcal X}_u$  has the distribution of the self-similar Markov process $X$ with characteristics $(\Psi,\alpha)$ started from $x$, we deduce from Lemma \ref{L1}(i) below  that 
$$\E_1\left(\int_0^{\zeta_u}{\mathcal X}^{q+\alpha}_u(t) \d t \right)  = \E_1\left( -\frac{{\mathcal X}_u(0)^{q}}{\Psi(q)}\right).$$
An appeal to \eqref{eq:lem0ii} completes the proof of \eqref{eq:mommu}.

We then pick any $\theta>0$ with $\kappa(\theta)<0$ (note that this forces $\theta<\omega_{+}$) and consider $\tilde \Phi (q)\coloneqq \kappa(\theta+q)$. Then $\tilde \Phi$ is also the Laplace exponent of a L\'evy process, say $\tilde \eta$, which has  killing rate $-\kappa(\theta)>0$. We write $\tilde Y$ for the self-similar Markov process with characteristics $(\tilde \Phi, \alpha)$, and $\tilde \rho_t(x,\d y)$ for its transition (sub-)probabilities. As $\tilde \Phi(q)= \Phi^+(q+\theta-\omega_{+})$, the laws of the L\'evy processes $\eta^+$ and $\tilde \eta$ are equivalent on every finite horizon, and more precisely, there is the absolute continuity relation 
 $$E(F(\tilde \eta(s):{0\leq s \leq t}))=E(F( \eta^+(s): {0\leq s \leq t}) \exp((\theta-\omega_{+})\eta^+(t))),$$
 for every functional $F\geq 0$ which is zero when applied to a path with lifetime less than $t$. 
  It is straightforward to deduce that there is the identity 
 $$\tilde \rho_t(x, \d y)=\left(\frac{y}{x}\right)^{\theta-\omega_{+} }P^+_x(Y^+(t)\in\d y),$$
 and therefore our statement can also be rephrased as 
\begin{equation} \label{Eqtilde}
\mu_t^x(\d y)=\left(\frac{x}{y}\right)^{\theta}\tilde \rho_t(x, \d y)\,,\qquad y>0.
\end{equation}

To prove \eqref{Eqtilde}, we first note from the (temporal) branching property of growth-fragmentation processes (see Theorem 2 in \cite{BeGF}, or Lemma \ref{L6} here) that the family $(\mu^x_t)_{x,t\geq 0}$ fulfills the Chapman-Kolmogorov identity
$$\langle \mu_{t+s}^x,f\rangle = \int_{(0,\infty)} \langle \mu_t^y,f\rangle \mu_s(x, \d y).$$
However, this is not a Markovian transition kernel, as $\langle \mu_t^y,1\rangle$ may be larger than $1$, and even infinite. Nonetheless, thanks to Theorem 2 in \cite{BeGF},
since $\kappa(\theta)<0$, the power function $h:y\mapsto y^{\theta}$ is excessive for the 
growth-fragmentation process ${\bf X}$, that is  $\langle \mu_t^x, h\rangle \leq h(x)$ for every $x>0$. It follows that if we introduce the super-harmonic transform
$$\tilde \varrho_t(x, \d y)\coloneqq \left(\frac{y}{x}\right)^{\theta}\mu_t^x(\d y)\,,$$
then the semigroup property of the measures $\mu_t^x(\d y)$ is transmitted to $\tilde \varrho_t(x, \d y)$, which then form a transition (sub-)probability kernel of a Markov process on $(0,\infty)$. Further,  the measures $\mu_t^x(\d y)$ fulfill the scaling property 
$$\langle \mu_t^x,f\rangle = \langle \mu_{tx^{\alpha}}^1,f(x\, \cdot) \rangle$$ 
(see again Theorem 2 in \cite{BeGF}), and this also propagates to $\tilde \varrho_t(x, \d y)$. 

Now it suffices to observe that for every $q>0$ with $\kappa(q)<0$, we have
$$\int_0^{\infty} \int_{(0,\infty)} y^{q+\alpha-\theta} \tilde \varrho_t(1, \d y) \d t= \int_0^{\infty} \int_{(0,\infty)} y^{q+\alpha} \mu^1_t(\d y) \d t= -\frac{1}{\kappa(q)}\,,$$
where the second identity follows from \eqref{eq:mommu}. Recalling that $\kappa(q)=\tilde  \Phi(q-\theta)$, Lemma \ref{L1}(ii) below enables us to conclude that $\tilde \varrho= \tilde \rho$.
\end{proof}

In the proof of Theorem \ref{T1}, we used the following simple characterization of the transition kernel  of a self-similar Markov process  in terms of the Mellin transform of its potential measure.  
 
\begin{lemma}\label{L1} Let $X$ be a self-similar Markov process  with characteristics $(\Psi,\alpha)$.
 \begin{itemize}
\item[(i)] For every $q>0$ and $x>0$, we have
$$\int_0^{\infty} E_x(X(t)^{q+\alpha})  \d t =\left\{\begin{matrix} -{x^{q}}/{\Psi(q)}& \hbox{if $\Psi(q)< 0$,}\\
\infty & \hbox{ otherwise.}\end{matrix} \right.$$

\item[(ii)] Conversely,  consider a sub-Markovian transition kernel $(\varrho_t(x,\d y))_{t\geq 0}$ on $(0,\infty)$ which is self-similar, in the sense that for every $t\geq 0$ and $x>0$,  $$\langle \varrho_t(x, \cdot), f\rangle= \langle \varrho_{tx^{\alpha}}(1, \cdot), f(x\cdot)\rangle.$$ If further 
$$\int_{0}^{\infty} \d t\int_{(0,\infty)} y^{q+\alpha}\varrho_t(1,\d y)= -\frac{1}{\Psi(q)}\qquad \hbox{whenever $\Psi(q)< 0$ },$$
then $\varrho_t(x,\d y))= P_x(X(t)\in \d y)$.
\end{itemize}
\end{lemma}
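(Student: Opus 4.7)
The plan is to exploit the Lamperti correspondence in both directions, together with an explicit time-change computation.

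For part (i), I would perform the Lamperti substitution $t = x^{-\alpha}\int_0^u \e^{-\alpha\xi(v)}\,\d v$ in the potential integral. Writing $X(t)=x\e^{\xi(u)}$ and $\d t = x^{-\alpha}\e^{-\alpha\xi(u)}\,\d u$, the powers of $x$ and the $\alpha$-dependent exponentials combine to give
\[
\int_0^{\zeta} X(t)^{q+\alpha}\,\d t \;=\; x^{q}\int_0^{\infty}\e^{q\xi(u)}\,\d u,
\]
where the right-hand integral is implicitly truncated at the killing time of $\xi$ via the convention that $\e^{q\xi(u)}=0$ after killing. Taking expectations and using \eqref{EqLap} (with the killing rate built into $\Psi$), the claim reduces to $\int_0^{\infty}\e^{u\Psi(q)}\,\d u = -1/\Psi(q)$ when $\Psi(q)<0$, and $+\infty$ otherwise.

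For part (ii), I would run Lamperti's theorem in reverse: any self-similar sub-Markov semigroup $(\varrho_t)$ on $(0,\infty)$ with scaling index $\alpha$ is the transition semigroup of a positive self-similar Markov process, which in turn corresponds to a (possibly killed) real-valued L\'evy process $\tilde\xi$ with some Laplace exponent $\tilde\Psi$. Applying part (i) to this associated process, its potential measure has Mellin transform $-1/\tilde\Psi(q)$ whenever $\tilde\Psi(q)<0$, and $+\infty$ otherwise. On the set $\{q>0:\Psi(q)<0\}$, the hypothesis forces this Mellin transform to equal the finite positive number $-1/\Psi(q)$, which in turn imposes $\tilde\Psi(q)<0$ and $\tilde\Psi(q)=\Psi(q)$ there.

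It remains to upgrade this to full equality of $\Psi$ and $\tilde\Psi$. The standing assumptions from Sec.~\ref{sec:lamperti} (either ${\tt k}>0$ or $\Psi'(0+)<0$) guarantee that $\{\Psi<0\}$ contains a non-empty open right-neighborhood of $0$. Since Laplace exponents of L\'evy processes are real-analytic on the interior of their effective domain, coincidence on this open interval propagates by analytic continuation to coincidence everywhere in the common domain. Hence $\tilde\Psi\equiv\Psi$, and uniqueness of the law of a L\'evy process from its Laplace exponent gives that $\tilde\xi$ and $\xi$ have the same distribution; consequently $\varrho_t(x,\cdot)=P_x(X(t)\in\cdot)$ for every $t,x$. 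The main obstacle is precisely this last identification: knowing the Mellin transform only on $\{\Psi<0\}$ must be enough, which rests on the two standard facts just invoked (non-emptiness of $\{\Psi<0\}$ and real-analyticity of Laplace exponents).
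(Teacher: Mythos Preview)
Your proposal is correct and follows essentially the same approach as the paper: for (i) the Lamperti time-change identity $\int_0^{\zeta}X(t)^{q+\alpha}\,\d t = x^{q}\int_0^{\infty}\e^{q\xi(u)}\,\d u$ reduces the claim to $\int_0^{\infty}\e^{u\Psi(q)}\,\d u$, and for (ii) the paper likewise invokes Lamperti's theorem in reverse to obtain a L\'evy process with Laplace exponent $\tilde\Psi$, matches $\tilde\Psi=\Psi$ on the open set $\{\Psi<0\}$, and concludes by analytic continuation. The only cosmetic difference is that the paper establishes (i) first for $x=1$ and then appeals to self-similarity, whereas you carry the factor $x^{q}$ through the substitution directly.
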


\begin{proof} (i) For $x=1$, this follows readily from \eqref{EqLap}, \eqref{eq:chgtvr} and Tonelli's Theorem, and then for general $x>0$, from self-similarity.

(ii)  Our assumptions ensure that $(\varrho_t(x,\d y))_{t\geq 0}$ is the transition kernel of a self-similar Markov process on $(0,\infty)$, say $Y=(Y(t), t\geq 0)$. We  know from Lamperti \cite{Lam72} that there is a (possibly killed) L\'evy process $\eta=(\eta(t))_{t\geq 0}$ whose Lamperti transform has the same law as $Y$. We write $\Phi$ for the Laplace exponent of $\eta$,  which is given by $\Phi(q)=\ln\E(\exp(q\eta(1)))$. 
We deduce from (i)  that $\Phi(q)=\Psi(q)$ 
provided that $\Psi(q)<0$. Since  the set $\{q>0:\Psi(q)<0\}$ has a non-empty interior, we conclude by analytic continuation of the characteristic functions that  $\eta$ has the same law as $\xi$. 
\end{proof}

\subsection{Two temporal martingales} 
\label{sec:temporal}

We shall now present some applications of Theorem \ref{T1}, starting with remarkable temporal  martingales (recall also Proposition \ref{P0}) which, roughly speaking, are the temporal versions of the genealogical martingales of Sec.~\ref{sec:genealogical}.
Namely, we define first for every $t\geq 0$
$$M^+(t)\coloneqq \sum_{i=1}^{\infty} X_i^{\omega_+}(t)\quad \text{and} \quad M^-(t)\coloneqq \sum_{i=1}^{\infty} X_i^{\omega_-}(t),$$ 
where for the second definition, we implicitly assume that Cram\'er's hypothesis \eqref{omega-}  is fulfilled.

\begin{corollary}\label{C3}   \begin{enumerate} 
\item[(i)] For $\alpha\leq 0$, $M^+$ is a $\P_x$-martingale, whereas for $\alpha >0$, $M^+$ is  a $\P_x$-supermartingale which converges to $0$ in $L^1(\P_x)$.

\item[(ii)] Assume  \eqref{omega-}. For $\alpha\geq 0$, $M^-$ is a $\P_x$-martingale, whereas for
$\alpha <0$, $M^-$ is  a $\P_x$-supermartingale and there exists some constant $c\in(0,\infty)$ such that
 $$\E_x\left(M^-(t)\right) \sim  c x^{\omega_{+}}t^{(\omega_+-\omega_{-})/\alpha}\qquad \hbox{as }t\to \infty$$
 
For  $\alpha> 0$, there exists some constant $c'\in(0,\infty)$ such that
 $$\E_x\left(M^+(t)\right) \sim  c' x^{\omega_{-}}t^{-(\omega_+-\omega_{-})/\alpha}\qquad \hbox{as }t\to \infty.$$
\end{enumerate}
\end{corollary}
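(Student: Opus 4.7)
The plan is to express the expectations $\E_x(M^\pm(t))$ in terms of the lifetimes of suitable self-similar Markov processes, and then to upgrade these mean identities to the full (super-)martingale property by invoking the branching property of the growth-fragmentation. For $M^+$, Theorem~\ref{T1} gives immediately
$$\E_x(M^+(t)) = \int_{(0,\infty)} y^{\omega_+}\,\mu_t^x(\d y) = x^{\omega_+}\,P^+_x(\zeta^+ > t),$$
where $\zeta^+$ denotes the lifetime of $Y^+$ under $P^+_x$. For $M^-$, I would revisit the proof of Theorem~\ref{T1}: it actually shows $\mu_t^x(\d y)=(x/y)^\theta \tilde\rho_t^\theta(x,\d y)$ for every $\theta\in(\omega_-,\omega_+)$, where $\tilde\rho^\theta$ is the sub-Markovian transition kernel of the self-similar Markov process with Laplace exponent $\tilde\Phi(q)=\kappa(q+\theta)$. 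Letting $\theta\downarrow\omega_-$, the Laplace exponents converge to $\Phi^-(q)\coloneqq\kappa(q+\omega_-)$ (which, by the same computation as Lemma~\ref{Lphi+}, is itself the Laplace exponent of a L\'evy process $\eta^-$ drifting to $-\infty$), the corresponding Lamperti-type processes converge to $Y^-$, and one obtains the companion identity
$$\E_x(M^-(t)) = x^{\omega_-}\,P^-_x(\zeta^- > t).$$

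Next I would analyze $P^\pm_x(\zeta^\pm > t)$ via the Lamperti representation. Since $\eta^+$ drifts to $+\infty$ and $\eta^-$ drifts to $-\infty$, while
$$\zeta^\pm = x^{-\alpha}\int_0^\infty \exp\!\bigl(-\alpha\,\eta^\pm(s)\bigr)\,\d s,$$
a straightforward discussion on the sign of $\alpha$ yields $\zeta^+=\infty$ a.s.\ iff $\alpha\le 0$, and $\zeta^-=\infty$ a.s.\ iff $\alpha\ge 0$. Consequently $\E_x(M^+(t))=x^{\omega_+}=M^+(0)$ for $\alpha\le 0$ while for $\alpha>0$ the mean is strictly smaller than $M^+(0)$ and tends to $0$; the situation for $M^-$ is mirror-symmetric. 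The (super-)martingale property now follows by combining these mean identities with the branching property for the growth-fragmentation recalled before Lemma~\ref{L6}: conditioning on ${\bf X}(t)=(x_1,x_2,\dots)$ gives
$$\E_x\bigl(M^\pm(t+s)\,\big|\,{\mathcal F}_t\bigr) = \sum_{i\ge 1}\E_{x_i}(M^\pm(s)),$$
which equals $\sum_i x_i^{\omega_\pm}=M^\pm(t)$ in the martingale case and is bounded by $M^\pm(t)$ in the supermartingale case. The $L^1$-convergence of $M^+$ to $0$ when $\alpha>0$ is immediate from the decay of its mean, since $M^+\ge 0$.

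For the asymptotics, I would appeal to classical tail estimates for exponential functionals of L\'evy processes under Cram\'er's condition (Rivero, Bertoin--Yor). When $\alpha>0$, the process $-\alpha\eta^+$ drifts to $-\infty$ and its Laplace exponent $q\mapsto\Phi^+(-\alpha q)$ has positive Cram\'er root $(\omega_+-\omega_-)/\alpha$, whence $P^+_1(\zeta^+>s)\sim c\,s^{-(\omega_+-\omega_-)/\alpha}$. Combined with the scaling identity $P^+_x(\zeta^+>t)=P^+_1(\zeta^+>tx^\alpha)$ and the mean formula, this produces $\E_x(M^+(t))\sim c'\,x^{\omega_-}\,t^{-(\omega_+-\omega_-)/\alpha}$. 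The symmetric computation for $\alpha<0$ and $\eta^-$, whose associated Cram\'er exponent is $(\omega_--\omega_+)/\alpha>0$, delivers the stated estimate for $\E_x(M^-(t))$, with the extra factor $x^{\omega_+-\omega_-}$ coming from scaling, which when multiplied by the prefactor $x^{\omega_-}$ produces the announced $x^{\omega_+}$. The delicate step in the whole argument is the extension of the many-to-one formula to the boundary exponent $\omega_-$, which requires a uniform-integrability input to justify the exchange of limit and integration as $\theta\downarrow\omega_-$; everything else is a direct manipulation of Lamperti's representation together with the Cram\'er tail estimate.
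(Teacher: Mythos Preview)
Your argument is essentially correct and matches the paper's approach closely: both compute $\E_x(M^\pm(t))$ via the many-to-one formula (Theorem~\ref{T1}), identify the result as a lifetime probability for the self-similar Markov process built from $\eta^\pm$ via Lamperti, use the drift direction of $\eta^\pm$ to decide martingale vs.\ supermartingale, and invoke the branching property (Lemma~\ref{L6}) to lift the mean identity to the conditional one. For the tail asymptotics both appeal to Rivero's Cram\'er-type estimate for exponential functionals of L\'evy processes.

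The one place where you take a genuinely different route is the derivation of the companion formula $\E_x(M^-(t))=x^{\omega_-}P^-_x(\zeta^->t)$. You propose passing to the limit $\theta\downarrow\omega_-$ in the identity $\mu_t^x(\d y)=(x/y)^\theta\tilde\rho_t^\theta(x,\d y)$, and correctly flag the exchange of limit and integral as ``the delicate step''. The paper avoids this limit altogether: it stays with the $\omega_+$-version of Theorem~\ref{T1} and then applies the \emph{Esscher transformation} linking the laws of $\eta^+$ and $\eta^-$ (since $\Phi^-(q)=\Phi^+(q+\omega_--\omega_+)$). In fact, the very Esscher step already present in the proof of Theorem~\ref{T1} (relating $\tilde\rho^\theta$ to $\rho^+$) works verbatim with $\omega_-$ in place of $\omega_+$, giving $\tilde\rho_t^\theta(x,\d y)=(y/x)^{\theta-\omega_-}\rho_t^-(x,\d y)$ and hence $\mu_t^x(\d y)=(x/y)^{\omega_-}\rho_t^-(x,\d y)$ directly, with no uniform-integrability justification needed. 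So your limiting argument is not wrong, but it is an unnecessary complication; the Esscher shortcut makes the proof entirely elementary.
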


\begin{proof}  (i)  Set for $q>0$ and ${\bf x}=(x_1,\cdots)$ 
$$F_q({\bf x})=\sum_{n=1}^{\infty} x_n^q,$$
so that $M^{\pm}(t)=F_{\omega_{\pm}}({\bf X}(t))$. 
 We know from Theorem \ref{T1} that for every $x>0$, there are the identities
$$\E_x(F_{\omega_{+}}({\bf X}(t)))= \langle \mu_t^x, F_{\omega_{+}}\rangle =  x^{\omega_{+}}P_x^+(Y^+(t)\in(0,\infty)) ,$$
where $Y^+$ denotes  the self-similar Markov process  with characteristics $(\Phi^+, \alpha)$. Recall that $\Phi^+(0)=0$ and  $(\Phi^+)'(0)=\kappa'(\omega_{+})>0$, so that the  L\'evy process $\eta^+$ with Laplace exponent $\Phi^+$ drifts to $+\infty$. By Lamperti's construction, this entails that the lifetime of $Y^+$ is a.s. infinite if $\alpha \leq 0$, and a.s. finite if $\alpha>0$.
Thus $\E_x(F_{\omega_{+}}({\bf X}(t)))=x^{\omega_+}$ if $\alpha \leq 0$, whereas $\lim_{t\to \infty} \E_x(F_{\omega_{+}}({\bf X}(t)))=0$ if $\alpha >0$, and 
 our claims follows easily from the branching property of growth-fragmentations. 
 
 (ii) The first claim follows from the same argument as in (i).
Then assume $\alpha<0$ and take first $x=1$. Then combining Theorem \ref{T1}, Lamperti's construction and Esscher transformation,  we see that
$$\E_1\left(M^-(t)\right)=P\left(\int_0^{\infty} \exp(-\alpha \eta^-(s))\d s>t\right),$$
where $\eta^-$ denotes a L\'evy process with Laplace exponent $\Phi^-\coloneqq \kappa(\cdot+\omega_-)$.
The right-hand side can be estimated using results by Rivero \cite{Riv}. Indeed Cram\'er's condition holds, namely
$$E\left(\exp((\omega_{+}-\omega_-) \eta^-(1))\right)=1 \quad \hbox{and} \quad E\left (|\eta^-(1)|\exp((\omega_{+}-\omega_-)\eta^-(1))\right) =E(|\eta^+(1)|)<\infty\,,$$
and it follows from Lemma 4 in \cite{Riv} that 
$$P\left(\int_0^{\infty} \exp(-\alpha \eta^-(s))\d s>t\right) \, \sim\, c t^{  (\omega_{+}-\omega_-)/\alpha}\qquad \hbox{ as }t\to \infty.$$
This establishes our claim for $x=1$, and the general case then follows by scaling. 
The remaining assertion about $M^+$ follows  {\it mutatis mutandis} from the same argument.
\end{proof}

We next relate the (super-)martingales in continuous time $M^{\pm}$ to the discrete parameter martingale ${\mathcal M}^{\pm}$ introduced in the preceding section. 
 In this direction, we recall that even though  the martingale property of certain additive functionals
of a branching process with a discrete genealogy may be preserved by evaluation along increasing families of optional lines (see in particular \cite{Jagers},  \cite{Kyp0} and Section 6 in \cite{BiKy}), 
the family of cells alive at a given time does not  form an optional line (see Example c on page 190 in \cite{Jagers}), and  the aforementioned references thus do not apply directly here. However similar arguments can often be used, and then we shall merely provide sketch proofs.

Recall that $\bar{\bf X}(t)$ denotes the enriched growth fragmentation process in which the sizes of cells alive at time $t$ are recorded together with their generations, and that $(\bar{\mathcal F}_t)_{t\geq0}$ denotes the natural filtration of this enriched process. 

\begin{lemma}\label{L12} For every $t\geq 0$, there is the identity
$$M^+(t)=\lim_{n\to \infty}{\mathcal E}_x\left({\mathcal M}^+(n)\mid \bar{\mathcal F}_t\right)
\qquad {\mathcal P}_x\hbox{-a.s.},$$
and this convergence also holds in $L^1({\mathcal P}_x)$ when $\alpha\leq 0$. 
\end{lemma}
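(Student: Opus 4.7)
The plan is to fix $t\geq 0$ and decompose $\mathcal{M}^+(n)$ according to whether the cells involved were born at or before time $t$ or strictly after. Write
$$\mathcal{M}^+(n)=\mathcal{A}^+(n;t)+\mathcal{B}^+(n;t),$$
where $\mathcal{A}^+(n;t):=\sum_{|u|=n+1,\,b_u\leq t}\mathcal{X}_u^{\omega_+}(0)$ collects the contribution of cells born by time $t$ and $\mathcal{B}^+(n;t):=\sum_{|u|=n+1,\,b_u>t}\mathcal{X}_u^{\omega_+}(0)$ the contribution of cells born strictly after $t$. The first piece is $\bar{\mathcal{F}}_t$-measurable, since every birth event up to time $t$ is visible as a negative jump producing a new mark in the enriched process $\bar{\mathbf{X}}$; the heart of the argument is thus to compute the conditional expectation of $\mathcal{B}^+(n;t)$ via Lemma~\ref{L6} and let $n\to\infty$.

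For the key step I would check that each cell $u$ with $|u|=n+1$ and $b_u>t$ has a unique deepest ancestor alive at time $t$: along its lineage $\varnothing=u_0,u_1,\dots,u_{n+1}=u$ the birth times are strictly increasing, so there is a largest $k$ with $b_{u_k}\leq t$, and the fact that children are born during the lifetime of their parent gives $b_{u_{k+1}}\in[b_{u_k},b_{u_k}+\zeta_{u_k})$, which together with $b_{u_{k+1}}>t$ forces $u_k$ to be alive at time $t$; hence $u_k=v_i$ for exactly one alive cell $v_i$ recorded in $\bar{\mathbf{X}}(t)=\{\!\!\{(x_i,n_i)\}\!\!\}$. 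By Lemma~\ref{L6}, the collection of post-$t$ descendants of $v_i$ (those not passing through a deeper alive cell) is, conditionally on $\bar{\mathcal{F}}_t$, an independent growth-fragmentation of law $\mathcal{P}_{x_i}$ with generations shifted by $n_i$; its contribution to $\mathcal{B}^+(n;t)$ is the intrinsic martingale $\mathcal{M}^+_{(i)}(n-n_i)$ of that subtree for $n\geq n_i$, and $0$ for $n<n_i$ (the only cell of subtree $i$ at whole-generation $\leq n_i$ is the root $v_i$, born at or before $t$). Since $\mathcal{E}_{x_i}(\mathcal{M}^+_{(i)}(k))=x_i^{\omega_+}$ for all $k\geq 0$, Tonelli yields
$$\mathcal{E}_x\bigl(\mathcal{B}^+(n;t)\,\bigm|\,\bar{\mathcal{F}}_t\bigr)=\sum_{i:\,n_i\leq n}x_i^{\omega_+},$$
which increases monotonically to $M^+(t)$ as $n\to\infty$.

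To finish the almost sure statement, I would bound $\mathcal{A}^+(n;t)$ trivially by $\mathcal{M}^+(n)$ and invoke the fact, recalled in Sec.~\ref{sec:genealogical} from Biggins's theorem and the condition $m'(\omega_+)>0$, that $\mathcal{M}^+(n)\to 0$ almost surely; since $\mathcal{A}^+(n;t)$ is itself $\bar{\mathcal{F}}_t$-measurable, combining the two pieces gives $\mathcal{E}_x(\mathcal{M}^+(n)\mid\bar{\mathcal{F}}_t)\to M^+(t)$ a.s. For the $L^1$ statement when $\alpha\leq 0$, Corollary~\ref{C3}(i) gives $\mathcal{E}_x(M^+(t))=x^{\omega_+}$, and the tower property yields $\mathcal{E}_x(\mathcal{E}_x(\mathcal{M}^+(n)\mid\bar{\mathcal{F}}_t))=\mathcal{E}_x(\mathcal{M}^+(n))=x^{\omega_+}$ for every $n$; a.s.\ convergence of non-negative random variables with matching means then forces $L^1$ convergence by Scheffé's lemma. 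The main obstacle is really bookkeeping: one must assign each post-$t$ cell to exactly one subtree in the branching decomposition, and the ``deepest alive ancestor'' description does this unambiguously, after which Lemma~\ref{L6} supplies the conditional independence and law identification needed to make the subtree-by-subtree expectation computation rigorous.
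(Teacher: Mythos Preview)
Your proof is correct and follows essentially the same route as the paper's own argument: decompose $\mathcal{M}^+(n)$ into the $\bar{\mathcal{F}}_t$-measurable piece $\sum_{|u|=n+1,\,b_u\leq t}\mathcal{X}_u^{\omega_+}(0)$ and the post-$t$ piece, compute the conditional expectation of the latter via the branching property (Lemma~\ref{L6}) as $\sum_{|v|\leq n,\,b_v\leq t}\mathcal{X}_v^{\omega_+}(t-b_v)$, then let $n\to\infty$ using $\mathcal{M}^+(n)\to 0$ and monotone convergence; the $L^1$ statement via Scheff\'e is identical. Your more explicit ``deepest alive ancestor'' bookkeeping spells out exactly what the paper compresses into a single sentence invoking Lemma~\ref{L6} and the martingale property.
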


\begin{proof} We first observe from the branching property stated in Lemma \ref{L6} and the martingale property of ${\mathcal M}^+$ that for every $n\geq0$ and $t\geq 0$, we have
$${\mathcal E}_x\left({\mathcal M}^+(n)\mid \bar{\mathcal F}_t\right) 
= \sum_{|u|=n+1} {\bf1}_{b_u\leq t}{\mathcal X}^{\omega_+}_u(0) + \sum_{|v|\leq n}{\bf1}_{b_v\leq t}{\mathcal X}^{\omega_+}_v(t-b_v)\,, \qquad{\mathcal P}_x\hbox{-a.s.}$$
Because ${\mathcal M}^+(n)$ converges to $0$ a.s., the limit as $n\to \infty$ of the first term in the sum of the right-hand side equals $0$.
On the other hand, by monotone convergence, we have
$$\lim_{n\to\infty} \sum_{|v|\leq n}{\bf1}_{b_v\leq t}{\mathcal X}^{\omega_+}_v(t-b_v)
= \sum_{v\in\U}{\bf1}_{b_v\leq t}{\mathcal X}^{\omega_+}_v(t-b_v)=M^+(t).$$
The second assertion follows from the fact that $M^+$ is a martingale when $\alpha \leq 0$, so
$${\mathcal E}_x(M^+(t)) = x^{\omega_+} = {\mathcal E}_x({\mathcal M}^+(n)),$$
and we complete the proof with an application of  Scheff\'e's lemma.
\end{proof}

The intrinsic area measure ${\mathcal A}$ which was introduced at the end of  Sec.~\ref{sec:genealogical} enables us to describe close links between the Malthusian martingale ${\mathcal M}^-$, its terminal value, and the (super)-martingale $M^-$.
Recall that  for every leaf $\ell\in\partial \U$,
$b_{\ell}\coloneqq\lim_{n\to \infty} \uparrow b_{\ell(n)}$ with $\ell(n)$ the parent of $\ell$ at generation $n$.

\begin{theorem}\label{T3} Suppose  \eqref{omega-}. Then the following assertions hold:
\begin{enumerate}
\item[(i)] For every $t\geq 0$, there is the identity
$${\mathcal E}_x\left( {\mathcal M}^-(\infty)\mid \bar {\mathcal F}_t\right)= {\mathcal A}\left(\{\ell\in\partial \U: b_{\ell}\leq t\}\right) + M^-(t).$$
\item[(ii)] If $\alpha \geq 0$, ${\mathcal A}\left(\{\ell\in\partial \U: b_{\ell}<\infty\}\right)=0$ a.s., then the martingale $M^-$ is bounded in $L^p$ for every $p<\omega_+/\omega_-$, and there is the a.s. identity
$M^-(\infty)={\mathcal M}^-(\infty)$. 
\end{enumerate}
\end{theorem}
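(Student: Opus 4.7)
The plan is to establish (i) by computing ${\mathcal E}_x[{\mathcal M}^-(n)\mid\bar{\mathcal F}_t]$ for every $n\geq 0$ and passing to the limit $n\to\infty$, then derive (ii) from (i) by sending $t\to\infty$.

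For (i), I argue exactly as in the proof of Lemma \ref{L12}: split the generation-$(n{+}1)$ cells according to whether they are born by time $t$, and use the enriched branching property (Lemma \ref{L6}) together with the martingale property of ${\mathcal M}^-$ applied to each fresh post-$t$ subtree. Concretely, for any cell $v$ alive at time $t$, Lemma \ref{L6} and the Markov property show that, conditionally on $\bar{\mathcal F}_t$, the strict descendants of $v$ whose first step on the path from $v$ has birth time $>t$ form a fresh cell system with Eve-size ${\mathcal X}_v(t-b_v)$; summing their $\omega_-$-th powers at the relevant relative generation and taking expectation yields ${\mathcal X}^{\omega_-}_v(t-b_v)$ by the martingale property of ${\mathcal M}^-$. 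Putting everything together,
\[
{\mathcal E}_x[{\mathcal M}^-(n)\mid\bar{\mathcal F}_t] \;=\; A_n(t) \;+\; \sum_{|v|\leq n}{\mathbbm{1}}_{b_v\leq t}\,{\mathcal X}^{\omega_-}_v(t-b_v),
\]
where $A_n(t):=\sum_{|u|=n+1,\,b_u\leq t}{\mathcal X}^{\omega_-}_u(0)$ and the usual convention ${\mathcal X}^{\omega_-}_v(t-b_v)=0$ applies when $v$ has died by time $t$.

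Letting $n\to\infty$: uniform integrability of ${\mathcal M}^-$ (Lemma \ref{L5'}) gives $L^1$-convergence of the left-hand side to ${\mathcal E}_x[{\mathcal M}^-(\infty)\mid\bar{\mathcal F}_t]$, and monotone convergence makes the second sum on the right increase to $M^-(t)$ both a.s.\ and in $L^1$. Consequently $A_n(t)$ converges in $L^1$ to the $\bar{\mathcal F}_t$-measurable random variable $A_\infty(t)={\mathcal E}_x[{\mathcal M}^-(\infty)\mid\bar{\mathcal F}_t]-M^-(t)$. The identification $A_\infty(t)={\mathcal A}(\{\ell:b_\ell\leq t\})$ is then obtained from two observations: first, the same branching argument applied to the intrinsic areas of the post-$t$ subtrees gives ${\mathcal E}[{\mathcal A}(\{b_\ell>t\})\mid\bar{\mathcal F}_t]=M^-(t)$, hence ${\mathcal E}[{\mathcal A}(\{b_\ell\leq t\})\mid\bar{\mathcal F}_t]=A_\infty(t)$; second, the quantity ${\mathcal A}(\{b_\ell\leq t\})$ is itself $\bar{\mathcal F}_t$-measurable, since it equals the a.s.\ limit of the (nonnegative supermartingale) sums $A_n(t)$ built only from pre-$t$ initial sizes — this is checked by applying Biggins' theorem inside each subtree rooted at a pre-$t$ cell and using uniqueness of the Malthusian limit.

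For (ii), apply (i) and let $t\to\infty$. The hypothesis ${\mathcal A}(\{b_\ell<\infty\})=0$ together with the monotone increase $\{b_\ell\leq t\}\uparrow\{b_\ell<\infty\}$ gives ${\mathcal A}(\{b_\ell\leq t\})\to 0$ a.s.\ and in $L^1$, while L\'evy's martingale convergence gives ${\mathcal E}_x[{\mathcal M}^-(\infty)\mid\bar{\mathcal F}_t]\to{\mathcal M}^-(\infty)$ (the limit is $\bar{\mathcal F}_\infty$-measurable since $\bar{\mathcal F}_\infty$ records every cell's initial size). Thus $M^-(t)\to{\mathcal M}^-(\infty)$ a.s.\ and in $L^1$. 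The pointwise bound $0\leq M^-(t)\leq{\mathcal E}_x[{\mathcal M}^-(\infty)\mid\bar{\mathcal F}_t]$ from (i), Doob's $L^p$-inequality, and the moment bound ${\mathcal M}^-(\infty)\in L^p$ for $p<\omega_+/\omega_-$ (Lemma \ref{L5'}(ii)) together yield $\|M^-(t)\|_p\leq\|{\mathcal M}^-(\infty)\|_p<\infty$ uniformly in $t$. The main obstacle is the measurability step in (i) — verifying that $\lim_n A_n(t)$ genuinely coincides a.s.\ with ${\mathcal A}(\{b_\ell\leq t\})$, so that the intrinsic area term enters the formula as a $\bar{\mathcal F}_t$-measurable quantity and not merely under conditional expectation.
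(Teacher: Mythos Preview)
Your argument for (i) follows the paper's route exactly: compute ${\mathcal E}_x[{\mathcal M}^-(n)\mid\bar{\mathcal F}_t]$ via the enriched branching property (Lemma~\ref{L6}), obtain the two-term decomposition, and let $n\to\infty$ using uniform integrability of ${\mathcal M}^-$ and monotone convergence. The paper dispatches the identification of $\lim_n A_n(t)$ with ${\mathcal A}(\{b_\ell\leq t\})$ in a single phrase (``by definition of the intrinsic area measure''), relying on the fact that $\{b_\ell\leq t\}=\bigcap_n\bigcup_{|u|=n+1,\,b_u\leq t}B(u)$; you unpack this more cautiously, and your worry about the measurability step is reasonable, but your treatment and the paper's are the same in substance.

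For (ii), however, you have misread the statement. The clause ``${\mathcal A}(\{\ell:b_\ell<\infty\})=0$ a.s.'' is a \emph{conclusion}, not an additional hypothesis: under $\alpha\geq 0$ alone, the theorem asserts that the area of finitely-born leaves vanishes, that $M^-$ is $L^p$-bounded, and that $M^-(\infty)={\mathcal M}^-(\infty)$. By assuming the vanishing rather than proving it, you have left a gap. The paper's argument is a one-liner: when $\alpha\geq 0$, Corollary~\ref{C3}(ii) says $M^-$ is a genuine martingale, so ${\mathcal E}_x[M^-(t)]=x^{\omega_-}={\mathcal E}_x[{\mathcal M}^-(\infty)]$; taking expectations in the identity from (i) forces ${\mathcal E}_x[{\mathcal A}(\{b_\ell\leq t\})]=0$, hence ${\mathcal A}(\{b_\ell\leq t\})=0$ a.s., and monotone convergence in $t$ gives ${\mathcal A}(\{b_\ell<\infty\})=0$. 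Once this is inserted, the remainder of your argument for (ii)---the domination $0\leq M^-(t)\leq{\mathcal E}_x[{\mathcal M}^-(\infty)\mid\bar{\mathcal F}_t]$, Doob's inequality, and Lemma~\ref{L5'}(ii) for the $L^p$ bound---is correct and matches the paper.
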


\begin{proof} (i) We start by observing that, just as in Lemma \ref{L12}, 
$${\mathcal E}_x\left({\mathcal M}^-(n)\mid \bar{\mathcal F}_t\right) 
= \sum_{|u|=n+1} {\bf1}_{b_u\leq t}{\mathcal X}^{\omega_-}_u(0) + \sum_{|v|\leq n}{\bf1}_{b_v\leq t}{\mathcal X}^{\omega_-}_v(t-b_v).$$
As $n\to\infty$, the left-hand side converges to  ${\mathcal E}_x\left( {\mathcal M}^-(\infty)\mid \bar {\mathcal F}_t\right)$. In the right-hand side, the first term of the sum converges to ${\mathcal A}\left(\{\ell\in\partial \U: b_{\ell}\leq t\}\right)$ by definition of the intrinsic area measure, and 
the second to $M^-(t)$. 

(ii) When $\alpha \geq 0$, $M^-(t)$ is a martingale and 
${\mathcal E}_x(M^-(t)) = x^{\omega_-} = {\mathcal E}_x({\mathcal M}^-(\infty))$. 
By (i), this entails that ${\mathcal A}\left(\{\ell\in\partial \U: b_{\ell}\leq t\}\right)=0$ a.s.  and 
 $M^-$ is uniformly integrable.
Finally, observe  that ${\mathcal M}^-(n)$ is $\bar {\mathcal F}_{\infty}$-measurable (see the discussion after Lemma \ref{L6}), thus so is ${\mathcal M}^-(\infty)$, and this yields $M^-(\infty)={\mathcal M}^-(\infty)$ a.s. Finally, recall from Lemma \ref{L5'} that then 
${\mathcal M}^-(\infty)\in L^p$ if and only if $p<\omega_+/\omega_-$.
\end{proof}

\begin{remark}\label{rem:lawM}Recall that the law of the martingale $ \mathcal{M}^{-}$  does not depend on the self-similarity parameter $\alpha$ but only on the  Laplace exponent $\Psi$ of the L\'evy process that drives the evolution of the Eve cell. Actually, the law of the  limiting value $ \mathcal{M}^-(\infty)$ only depends on the cumulant function $\kappa$. Indeed, by Theorem \ref{T3}, we have $ \mathcal{M}^-(\infty)=M^{-}(\infty)$ for $\alpha=0$, and so $ \mathcal{M}^-(\infty)$ only depends on the law of the associated homogeneous growth-fragmentation, which itself is  characterized by $\kappa$, see \cite{QShi}.
\end{remark}

\section{Spinal decompositions}
\label{sec:spine1}

Following Lyons, Pemantle \& Peres \cite{LPP}, the main purpose of this section will be to describe the growth-fragmentation process under the tilted probability measures associated with the martingales ${\mathcal M}^{\pm}$.  Recall that Lyons, Pemantle \& Peres  used additive martingales in branching random walks to tag a leaf at random in the genealogical tree of the process. The ancestral lineage of the tagged leaf forms the so-called spine, which  is viewed as a branch consisting of  tagged particles. Roughly speaking, the spinal decomposition claims that the untagged particles evolve as the ordinary ones in the branching random walk, whereas tagged particles reproduce according to a biased reproduction law, and the tagged child at the next generation is then picked by biased sample from the children of the tagged parent. 

Informally, using ${\mathcal M}^+$ to perform the probability tilting can be thought of as conditioning a distinguished cell to grow indefinitely, whereas the Malthusian martingale ${\mathcal M}^-$ rather corresponds to tagging a cell  randomly according to the intrinsic area measure ${\mathcal A}$. 
As the arguments are very similar, we shall provide complete proofs in the first case, and skip details in the second.

\subsection{Conditioning on indefinite growth}
\label{sec:plus}

In the setting of cell systems,  we first define a probability measure $\widehat{\mathcal P}^+_x$ describing the joint distribution of a cell system ${\mathcal X}=({\mathcal X}_u: u\in\U)$ (recall that we use canonical notation) and a leaf ${\mathcal L}\in\partial \U$,
where $\partial \U$ denotes the boundary of the Ulam tree, that is the space $\N^{\N}$ of infinite sequences of positive integers. 
Recall that ${\mathcal G}_n$ denotes the sigma-field generated by the cells with generation at most $n$. To start with, for every $n\geq 0$, the law of $({\mathcal X}_u: |u|\leq n)$ under 
$\widehat{\mathcal P}^+_x$  is absolutely continuous with respect to the restriction of ${\mathcal P}_x$ to ${\mathcal G}_n$, with density $x^{-\omega_{+}}{\mathcal M}^+(n)$, viz. 
$$\widehat{\mathcal P}^+_x\left(\Gamma_n\right)=x^{-\omega_+} {\mathcal E}_x\left({\mathcal M}^+(n) {\mathbbm{1}}_{\Gamma_n}\right)\qquad \hbox{ for every event }\Gamma_n\in{\mathcal G}_n.$$

We then discuss the tagged leaf ${\mathcal L}$. First, for every leaf $\ell\in\partial \U$, we write $\ell(n)$ for the parent of $\ell$ at generation $n$, that is the sequence of the $n$ first elements of $\ell$. Then the conditional law of the parent ${\mathcal L}(n+1)$ of the tagged leaf at generation $n+1$ is given by
\begin{equation}\label{eqlam}
\widehat{\mathcal P}^+_x\left({\mathcal L}(n+1)=v\mid {\mathcal G}_n\right) = \frac{{\mathcal X}_v^{\omega_+}(0)}{{\mathcal M}^+(n)}\,,\qquad\hbox{for every $v$ at generation $|v|=n+1$.}
\end{equation}
The coherence of this definition is ensured by the martingale property of ${\mathcal M}^+$ and the branching structure of cell systems. 
The sequence 
$(-\ln {\mathcal L}(n): n\geq 0)$ corresponds precisely to the spine in the framework considered by  Lyons, Pemantle \& Peres  \cite{LPP} and  provides the discrete (more precisely, generational) skeleton of the tagged cell, which we now introduce.

The birth-times $b_{{\ell}(n)}$ of the cells on the ancestral lineage of a leaf ${\ell}\in \partial \U$  form an increasing sequence, which converges to $b_{\ell}\coloneqq\lim_{n\to\infty} b_{{\ell}(n)}$. Focussing on the tagged leaf ${\mathcal L}$, we set $\hat{\mathcal X}(t)=\partial$ (recall that $\partial$ stands for a cemetery point) for $t\geq b_{\mathcal L}$ and 
$$ \hat{\mathcal X}(t) =  {\mathcal X}_{{\mathcal L}(n_t)}(t-b_{{\mathcal L}(n_t)})\qquad \hbox{ for } t<b_{{\mathcal L}}\,,$$
where $n_t$ denotes the  generation of the parent of the tagged leaf at time $t$, that is the unique integer $n\geq 0$ such that  $b_{{\mathcal L}(n)}\leq t <  b_{{\mathcal L}(n+1)}$. We should think of  $\hat{\mathcal X}(t)$ as the size of the {\it tagged cell} at time $t$;  understanding its evolution provides the key to  many properties of the law $\widehat{\mathcal P}^+_x$. 

 Observe from the very definition \eqref{eqlam},  that for every ${\mathcal G}_n$-measurable random variable ${\mathcal B}(n)\geq 0$, there is the identity
$$x^{\omega_{+}} \widehat{\mathcal E}^+_x(f({\mathcal X}_{{\mathcal L}_{n+1}}(0)) {\mathcal B}(n))={\mathcal E}_x\left(\sum_{|u|=n+1}
{\mathcal X}^{\omega_+}_u(0) f({\mathcal X}_u(0)){\mathcal B}(n)\right).$$
It should not come as a surprise that an analog identity with generations replaced by times holds.

\begin{proposition}\label{P2} For every $t\geq 0$, every measurable function $f: [0,\infty)\to [0,\infty)$, and every $\bar {\mathcal F}_t$-measurable random variable $B(t)\geq 0$, 
we have
$$ x^{\omega_{+}} \widehat{\mathcal E}^+_x(f(\hat{\mathcal X}(t)) B(t) )={\mathcal E}_x\left(\sum_{i=1}^{\infty} X^{\omega_{+}}_i(t)f(X_i(t))B(t)\right),$$
with the usual convention that $f(\partial)=0$. 
\end{proposition}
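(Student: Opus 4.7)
The plan is to partition the left-hand side according to the vertex $u\in\mathbb U$ at which the tagged cell sits at time $t$. Writing $n_t$ for the generation of the tagged cell, the tagged cell is $\mathcal X_u(t-b_u)$ precisely on the event $A_u(t):=\{\mathcal L(|u|)=u\}\cap\{b_u\leq t<b_{\mathcal L(|u|+1)}\}$, so
\[
\widehat{\mathcal E}^+_x\big[f(\hat{\mathcal X}(t))B(t)\big]=\sum_{u\in\mathbb U}\widehat{\mathcal E}^+_x\big[\mathbbm 1_{A_u(t)}\,f(\mathcal X_u(t-b_u))\,B(t)\big].
\]
I will show each summand multiplied by $x^{\omega_+}$ equals $\mathcal E_x[\mathbbm 1_{u\text{ alive at }t}\,\mathcal X_u^{\omega_+}(t-b_u)\,f(\mathcal X_u(t-b_u))\,B(t)]$; summation over $u$ then reconstructs the right-hand side as $\mathcal E_x[\sum_i X_i^{\omega_+}(t)f(X_i(t))B(t)]$.

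Fix $u$ with $|u|=n$. Since $A_u(t)=\{\mathcal L(n+1)\in\{uj:b_{uj}>t\}\}\cap\{b_u\leq t\}$ and the initial sizes and birth times of cells at generation $n+1$ are $\mathcal G_n$-measurable, summing \eqref{eqlam} over $v=uj$ with $b_{uj}>t$ gives
\[
\widehat{\mathcal P}^+_x(A_u(t)\mid\mathcal G_n)=\mathbbm 1_{b_u\leq t}\,\frac{N_u(t)}{\mathcal M^+(n)},\qquad N_u(t):=\sum_{j:\,b_{uj}>t}\mathcal X_{uj}^{\omega_+}(0).
\]
A direct inspection of the joint law $\widehat{\mathcal P}^+_x(d\omega,\mathcal L(n+1)=v)\propto\mathcal X_v^{\omega_+}(0)\,\mathcal P_x(d\omega)$ on $\mathcal G_n$ also shows that $\mathcal L(n+1)$ is conditionally independent of $\bar{\mathcal F}_t$ given $\mathcal G_n$ under $\widehat{\mathcal P}^+_x$. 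Replacing the indicator of $A_u(t)$ inside the expectation by this conditional probability and applying the $\mathcal G_n$-tilt $\widehat{\mathcal P}^+_x|_{\mathcal G_n}=(\mathcal M^+(n)/x^{\omega_+})\,\mathcal P_x|_{\mathcal G_n}$, the factor $1/\mathcal M^+(n)$ cancels the density, yielding
\[
x^{\omega_+}\widehat{\mathcal E}^+_x\big[\mathbbm 1_{A_u(t)}f(\mathcal X_u(t-b_u))B(t)\big]=\mathcal E_x\big[\mathbbm 1_{b_u\leq t}\,N_u(t)\,f(\mathcal X_u(t-b_u))\,B(t)\big].
\]

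To complete the argument, apply Proposition \ref{P0} to the cell process $\mathcal X_u$: under $\mathcal P_x$ the process $\mathcal X_u^{\omega_+}(s)+\sum_{s'\leq s}|\Delta_{-}\mathcal X_u(s')|^{\omega_+}$ is a uniformly integrable martingale with terminal value $\sum_j\mathcal X_{uj}^{\omega_+}(0)$, so evaluating at $s=t-b_u$ gives
\[
\mathcal E_x\big[N_u(t)\,\big|\,\mathcal F^{\mathcal X_u}_{t-b_u}\big]\,\mathbbm 1_{b_u\leq t}=\mathcal X_u^{\omega_+}(t-b_u)\,\mathbbm 1_{b_u\leq t<b_u+\zeta_u}.
\]
Because the future of $\mathcal X_u$ beyond age $t-b_u$ is independent of everything in $\bar{\mathcal F}_t$ lying outside the cell $u$'s own trajectory up to that age, this identity persists after conditioning on the full $\bar{\mathcal F}_t$. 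Inserting it into the previous display produces the desired summand $\mathcal E_x[\mathbbm 1_{u\text{ alive at }t}\,\mathcal X_u^{\omega_+}(t-b_u)f(\mathcal X_u(t-b_u))B(t)]$, and summing over $u$ recovers the statement.

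The main technical difficulty lies in the second step: because $B(t)$ is $\bar{\mathcal F}_t$-measurable and not $\mathcal G_n$-measurable, the $\mathcal G_n$-tilt cannot be applied directly to the integrand $\mathbbm 1_{b_u\leq t}N_u(t)f(\mathcal X_u(t-b_u))B(t)/\mathcal M^+(n)$. To justify the cancellation one has to rely on the conditional independence of $\mathcal L(n+1)$ and $\bar{\mathcal F}_t$ given $\mathcal G_n$, approximate $B(t)$ by $\mathcal G_m$-measurable truncations, and then exploit the martingale property $\mathcal E_x[\mathcal M^+(m)\mid\mathcal G_n]=\mathcal M^+(n)$ to pass to the limit $m\to\infty$; this is where the careful bookkeeping of the projective definition of $\widehat{\mathcal P}^+_x$ is essential.
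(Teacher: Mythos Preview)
Your overall strategy---decompose the left-hand side according to the vertex $u$ carrying the tagged cell at time $t$, then undo the tilt and invoke Proposition~\ref{P0}---is natural and close in spirit to the paper's argument. However, the claim on which your second step rests is false: under $\widehat{\mathcal P}^+_x$, the random index $\mathcal L(n+1)$ is \emph{not} conditionally independent of $\bar{\mathcal F}_t$ given $\mathcal G_n$. Indeed, for any $m>n$ one computes directly from the projective definition that
\[
\widehat{\mathcal P}^+_x\big(\mathcal L(n+1)=v\ \big|\ \mathcal G_m\big)=\frac{1}{\mathcal M^+(m)}\sum_{|w|=m+1,\,w(n+1)=v}\mathcal X_w^{\omega_+}(0),
\]
which is genuinely $\mathcal G_m$-measurable and not $\mathcal G_n$-measurable; since $\bar{\mathcal F}_t$ sees cells at arbitrarily high generations (those alive at time $t$), the conditional law of $\mathcal L(n+1)$ given $\mathcal G_n\vee\bar{\mathcal F}_t$ differs from \eqref{eqlam}. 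The intuition is that under the spine-tilt the subtree below the tagged child has a different law (it contains the spine, which does not die out), so observing $\bar{\mathbf X}(t)$ does carry information about which child is tagged. Consequently your displayed identity $x^{\omega_+}\widehat{\mathcal E}^+_x[\mathbbm 1_{A_u(t)}f(\mathcal X_u(t-b_u))B(t)]=\mathcal E_x[\mathbbm 1_{b_u\leq t}N_u(t)f(\mathcal X_u(t-b_u))B(t)]$ does not follow from the argument given, and the fix you sketch (keep $n$ fixed, approximate $B(t)$ by $\mathcal G_m$-measurable variables, use $\mathcal E_x[\mathcal M^+(m)\mid\mathcal G_n]=\mathcal M^+(n)$) runs into the same obstacle: after applying the $\mathcal G_m$-tilt one is left with the sum above rather than with $N_u(t)$.

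The paper avoids this by reversing the order of the two reductions. One first assumes that $B(t)$ is $\bar{\mathcal F}_t\wedge\mathcal G_k$-measurable for some fixed $k$; then for every $n>k$ the integrand $f(\hat{\mathcal X}(t))B(t)\mathbbm 1_{b_{\mathcal L(n+1)}>t}$ depends on the cell system only through $\mathcal G_n$, so the tilt and \eqref{eqlam} apply directly to give
\[
x^{-\omega_+}\,\mathcal E_x\Big(\sum_{|u|=n+1}\mathcal X_u^{\omega_+}(0)\,\mathbbm 1_{b_u>t}\,f\big(\mathcal X_{u(t)}(t-b_{u(t)})\big)\,B(t)\Big),
\]
with $u(t)$ the ancestor of $u$ alive at time $t$. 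Only \emph{afterwards}, and now under $\mathcal P_x$, does one invoke the branching property of Lemma~\ref{L6} at time $t$ together with the martingale property of $\mathcal M^+$ to collapse this to $x^{-\omega_+}\mathcal E_x\big(\sum_{|v|\leq n}\mathbbm 1_{b_v\leq t}\mathcal X_v^{\omega_+}(t-b_v)f(\mathcal X_v(t-b_v))B(t)\big)$---this is precisely your Proposition~\ref{P0} step, but carried out after the tilt rather than before, which is what makes it legitimate. Letting $n\to\infty$ by monotone convergence and then extending to general $\bar{\mathcal F}_t$-measurable $B(t)$ by a monotone class argument finishes the proof.
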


\begin{proof} Let us first assume that $B(t)$ is $\bar{\mathcal F}_t\wedge {\mathcal G}_k$-measurable for some fixed $k\in\N$. 
Since $f(\partial)=0$ and $\hat{\mathcal X}(t)=\partial$ for $t> b_{\mathcal L}$, we have
$$\widehat{\mathcal E}^+_x(f(\hat{\mathcal X}(t)) B(t) ) = \lim_{n\to \infty}\widehat{\mathcal E}^+_x(f(\hat{\mathcal X}(t)) B(t) \mathbbm{1}_{b_{{\mathcal L}(n+1)}>t}).$$
Then, provided that $n>k$, we have from the definition of the tagged leaf that 
$$\widehat{\mathcal E}^+_x(f(\hat{\mathcal X}(t)) B(t) \mathbbm{1}_{b_{{\mathcal L}(n+1)}>t})
=x^{-\omega_+} {\mathcal E}_x\left (\sum_{|u|=n+1} {\mathcal X}_u^{\omega_+}(0)  {\mathbbm{1}}_{b_u>t}  f({\mathcal X}_{u(t)}(t-b_{u(t)}))B(t) \right) ,$$
where, for every cell $u$ with $b_u>t$, $u(t)$ denotes the most recent ancestor of $u$ which is alive at time $t$. Just as in the proof of Lemma \ref{L12}, we decompose the family of cells $u$ at generation $n+1$ which are born after time $t$ into sub-families having  the same most recent ancestor alive at time $t$. Applying the branching property in Lemma \ref{L6} and the martingale property of ${\mathcal M}^+$, we get
$$ {\mathcal E}_x\left (\sum_{|u|=n+1} {\mathcal X}_u^{\omega_+}(0)  {\mathbbm{1}}_{b_u>t}  f({\mathcal X}_{u(t)}(t-b_{u(t)}))B(t) \right) =
{\mathcal E}_x\left(  \sum_{|v|\leq n}{\bf1}_{b_v\leq t}{\mathcal X}^{\omega_+}_v(t-b_v) f({\mathcal X}_{v}(t-b_{v}))B(t) \right).$$

Letting $n\to \infty$ and applying  monotone convergence establish  the formula of the statement when $B(t)$ is $\bar{\mathcal F}_t\wedge {\mathcal G}_k$-measurable. The case when we only assume that $B(t)$ is $\bar{\mathcal F}_t$-measurable then follows by a monotone class argument. 
\end{proof}

We next turn our attention to  the so-called {\it spinal decomposition} popularized by Lyons, Pemantle \& Peres  for branching random walks. In words, we follow the tagged cell as time passes, and for each of its negative jumps, we record the entire growth-fragmentation process which that jump generates
(that is, as usual, we interpret a negative jump as a birth event, and then record the growth-fragmentation process corresponding to the new-born cell). Roughly speaking, we shall see in the next theorem, that under $\widehat{\mathcal P}^+_1$, the law of the tagged cell  is given by that of a certain self-similar Markov process, and that conditionally on the path of the tagged cell, the growth-fragmentation processes generated by the negative jumps of $\hat{\mathcal X}$
are independent with the law $\P_x$, where $x$ is the (absolute) size of the jump. 

 In order to give a precise statement, we need to  label the negative jumps of $\hat{\mathcal X}$.
 If those  jumps times were isolated, then we could simply enumerate them in increasing order. Alternatively, if the tagged cell converged to $0$, then we could enumerate its negative jumps in the increasing order of their absolute sizes. However this is not the case in general, and we shall therefore introduce a deterministic algorithm, which is tailored for our purpose.

We first label each negative jump of the tagged cell by a pair $(n,j)$, where $n\geq 0$ denotes the generation of the tagged cell immediately before the jump occurs, and $j\geq 1$ the rank of that jump amongst the negative jumps which occurred while the generation of the tagged cell equals $n$, including the terminal jump when the generation of the tagged cell increases by one unit. We then write $\hat{\bf X}_{n,j}=(\hat{\bf X}_{n,j}(t): t\geq 0)$ for the growth-fragmentation generated by the $(n,j)$-jump. Specifically, on the one hand, if  the generation of the tagged cell does not increase during the $(n,j)$-jump, and if $u$ denotes the label of  the cell born at this birth event, then
$$\hat{\bf X}_{n,j}(t)=\left\{ \!\!\left\{ {\mathcal X}_{uv}(t+b_u-b_{uv}): v\in\U\,,\, b_{uv}\leq t+b_u< b_{uv}+\zeta_{uv}\right\}\!\!\right\}.$$
On the other hand, if the $(n,j)$-jump occurs at  an instant when the generation of the tagged cell increases, $u$ and $ui$ are the labels of the tagged cell immediately before, respectively after, the jump, then 
$$\hat{\bf X}_{n,j}(t)=\left\{ \!\!\left\{ {\mathcal X}_{uv}(t+b_{ui}-b_{uv}): v\in\U\,,v\neq i\, ,\, b_{uv}\leq t +b_{ui}< b_{uv}+\zeta_{uv}\right\}\!\!\right\}.$$
For definitiveness, we agree as usual that $\hat{\bf X}_{n,j}(t)\equiv \partial$ when the $(n,j)$-jump does not exist.
It should be plain that the entire growth-fragmentation process ${\bf X}$ can be recovered from
the process $(\hat{\mathcal X}(t), n_t)_{0\leq t< b_{\mathcal L}}$ of the tagged cell and its generation, and the family of processes $\left(\hat{\bf X}_{n,j}: n\geq 0, j\geq 1\right)$.

Next, recall Lemma \ref{Lphi+} and the notation there; 
in particular  $\eta^+$ denotes a L\'evy process  with L\'evy measure $\Pi$. By the L\'evy-It\^o decomposition, we can think of the negative jumps of $\eta^+$ as resulting from the superposition of two independent Poisson point processes on $\R_+\times (-\infty,0)$, the first with intensity $\e^{x\omega_+ }\d t \, \Lambda(\d x)$, and the second with intensity $\e^{x\omega_+ }\d t \, \tilde \Lambda(\d x)$. Because 
$\kappa(\omega_+)=0$, \eqref{eqkappa} entails that 
$$\int_{(-\infty,0)}\e^{x\omega_+ } \tilde \Lambda(\d x) = \int_{(-\infty,0)}(1-\e^x)^{\omega_+}\Lambda(\d x)=-\Psi(\omega_+)<\infty,$$
so if we mark the negative jumps of $\eta^+$ that correspond to the second point process, then the number of marked jumps up to time $t\geq 0$,  $N^+(t)$, forms a Poisson process with intensity $-\Psi(\omega_+)$. Recall also that we defined  $Y^+$  as the self-similar Markov process with characteristics $(\Phi^+,\alpha)$ started from $1$. Specifically we write the Lamperti time-substitution (recall \eqref{Eqcht})
$$\tau^+_t\, \coloneqq\,\inf\left\{r\geq0: \int_{0}^{r}\exp(-\alpha \eta^+(s))\d s\geq
t\right\}\quad \hbox{for} \quad0\leq t <I^+\coloneqq \int_{0}^{\infty}\exp(-\alpha \eta^+(s))\d s,$$
and 
$$Y^+(t)\coloneqq \exp(\eta^+(\tau^+_t))$$
with the usual convention that $Y^+(t)=\partial$ for $t\geq I^+$. We are now able to
claim:

\begin{theorem}\label{T2}  The distribution of $(\hat{\mathcal X}(t), n_t)_{0\leq t< b_{\mathcal L}}$ under $\widehat{\mathcal P}^+_1$ is the same as that of 
$$(Y^+(t), N^+(\tau^+_t))_{0\leq t < I^+}.$$
Further, conditionally on $(\hat{\mathcal X}(t), n_t)_{0\leq t< b_{\mathcal L}}$,
the processes $\hat{\bf X}_{n,j}$ for $ n\geq 0$ and $ j\geq 1$ are independent, and  each $\hat{\bf X}_{n,j}$ has the (conditional) law $\P_x$, with $x$ the absolute size of the jump of $\hat{\mathcal X}$ with label $(n,j)$. 

\end{theorem}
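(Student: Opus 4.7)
The plan is to construct directly a candidate probability measure $\widetilde{\mathcal P}^+_1$ on pairs (cell system, distinguished leaf) matching the joint description in the theorem, and then to verify that $\widetilde{\mathcal P}^+_1=\widehat{\mathcal P}^+_1$ by identifying the Radon--Nikodym density on each $\sigma({\mathcal L}(n+1))\vee{\mathcal G}_n$ with the one defining $\widehat{\mathcal P}^+_1$. This is the standard strategy of Lyons, Pemantle and Peres \cite{LPP} for spinal decompositions, adapted to the continuous-time cell-system setting.

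For the construction, I would use Lemma \ref{Lphi+} and the L\'evy--It\^o decomposition of the L\'evy measure $\Pi$ of $\eta^+$ to split the negative jumps of $\eta^+$ into two independent families: a ``regular'' family of intensity $\e^{x\omega_+}\Lambda(\d x)$, and a ``tagged'' family of intensity $\e^{x\omega_+}\tilde\Lambda(\d x)$, whose total mass equals $\int_{(-\infty,0)}(1-\e^y)^{\omega_+}\Lambda(\d y)=-\Psi(\omega_+)<\infty$ and which is therefore a compound Poisson process. Under $\widetilde{\mathcal P}^+_1$, the tagged trajectory $\hat{\mathcal X}$ is taken to be the Lamperti transform $Y^+$ of $\eta^+$; the generation counter is $n_t=N^+(\tau^+_t)$ where $N^+$ counts the tagged jumps of $\eta^+$; every regular negative jump of $\hat{\mathcal X}$ of absolute size $x$ initiates an independent cell subsystem of law ${\mathcal P}_x$; and at every tagged jump the trajectory $\hat{\mathcal X}$ continues as the new tagged daughter whereas the residual mother (of size $\hat{\mathcal X}(t-)-\hat{\mathcal X}(t)$) initiates an independent subsystem of law ${\mathcal P}_{\hat{\mathcal X}(t-)-\hat{\mathcal X}(t)}$. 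The conditional-independence and subsystem-law statement for the $\hat{\bf X}_{n,j}$ is built into this construction.

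To show $\widetilde{\mathcal P}^+_1=\widehat{\mathcal P}^+_1$, I would proceed by induction on $n$. The base case $n=0$ amounts to checking that the joint law of the Eve cell ${\mathcal X}_\varnothing$ (reconstructed under $\widetilde{\mathcal P}^+_1$ by concatenating $Y^+$ up to the first tagged jump with an independent $P_{\hat{\mathcal X}(t-)-\hat{\mathcal X}(t)}$ trajectory thereafter) and the tagged daughter ${\mathcal L}(1)$ is $P_1$ biased by $\sum_{s<\zeta}|\Delta_{\hbox{-}}X(s)|^{\omega_+}$, together with the size-biased pick of ${\mathcal L}(1)$ given ${\mathcal X}_\varnothing$. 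This is a Palm--Esscher calculation: the exponential tilt $\exp(\omega_+\xi(t)-t\Psi(\omega_+))$ converts $\xi$ into a L\'evy process with Laplace exponent $\Psi(\cdot+\omega_+)-\Psi(\omega_+)$, while adding one Palm-distributed negative jump sampled from $(1-\e^y)^{\omega_+}\mathbbm{1}_{y<0}\Lambda(\d y)$ placed at an exponential time of rate $-\Psi(\omega_+)$ exactly completes the L\'evy exponent to $\Phi^+$ as described by Lemma \ref{Lphi+}; the normalization is consistent with $E_1({\mathcal M}^+(0))=1$, which follows from Proposition \ref{P0}. As an independent check, the identity in Proposition \ref{P2} can be read off directly from the construction of $\widetilde{\mathcal P}^+_1$.

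The inductive step follows by applying the branching property (Lemma \ref{L6}) and the strong Markov property of $Y^+$ at the first tagged jump: the tagged subsystem after this jump reproduces, with parameter $\hat{\mathcal X}(t)$, an independent copy of the whole construction. The main obstacle will be the Palm--Esscher computation of the base case, which requires carefully tracking, \emph{simultaneously}, the exponential tilt of $\xi$ by $\omega_+$, the Palm sampling of a distinguished negative jump, and the Lamperti time change that converts $\xi$-level statements into self-similar-Markov-level statements about $Y^+$.
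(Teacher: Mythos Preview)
Your proposal is correct and follows essentially the same route as the paper. Both arguments reduce to $\alpha=0$ via the Lamperti transform, split the L\'evy measure $\Pi$ of $\eta^+$ into the ``regular'' part $\e^{x\omega_+}\Lambda(\d x)$ and the ``tagged'' compound-Poisson part $\e^{x\omega_+}\tilde\Lambda(\d x)$ (the paper writes this as $\eta^+=\xi^++\nu$), and then identify the law of one spine segment via the compensation formula, which is precisely your Palm--Esscher step; the extension to all generations is handled by the branching/Markov property in both cases. The only organisational difference is that you build a candidate $\widetilde{\mathcal P}^+_1$ and match Radon--Nikodym densities on $\sigma({\mathcal L}(n+1))\vee{\mathcal G}_n$, whereas the paper computes the spine law directly under $\widehat{\mathcal P}^+_1$; this means you must also reconstruct the \emph{full} Eve cell (including its ordinary continuation after the first tagged jump) and check it carries the ${\mathcal M}^+(0)$-bias, while the paper only needs the spine up to $b_{{\mathcal L}(1)}$ and handles the off-spine part by a separate factorisation computation.
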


\begin{proof} We need only to establish the statements for $\alpha=0$, as the general case  then follows from Lamperti's transformation. We focus first on the first claim and observe,  by the branching property of cell systems and the Markov property of Poisson point processes, that it suffices to verify that the distribution of $(\hat{\mathcal X}(t))_{0\leq t\leq b_{{\mathcal L}(1)}}$ under $\widehat{\mathcal P}^+_1$ is the same as that of $(\exp(\eta^+(t)))_{0\leq t \leq T_1}$, where $T_1$ denotes the instant when the first atom of the Poisson point process with intensity 
$\e^{x\omega_+ }\d t \,  \tilde \Lambda(\d x)$ arises. We stress that the endpoints $b_{{\mathcal L}(1)}$ and $T_1$ are included in the life-interval of those processes.

In this direction, we use Lemma \ref{Lphi+}(i) and  decompose the L\'evy process $\eta^+$ as the sum of two independent L\'evy processes, 
$$\eta^+(t)=\xi^+(t)+\nu(t),$$ where $(\nu(t))_{t\geq 0}$ is a compound Poisson process with L\'evy measure $\e^{x\omega_+ } \tilde \Lambda(\d x)$, so for every $q,t\geq 0$
$$
E\left(\exp(q\nu(t))\right) 
=\exp\left(t\int_{(-\infty,0)} \left( (1-\e^{x})^{q+\omega_+ }-(1-\e^{x})^{\omega_+ }\right) \Lambda(\d x)\right).
$$
The identity 
$$E\left(\exp(q\xi^+(t))\right)= E\left(\exp(q\eta^+(t))\right)/E\left(\exp(q\nu(t))\right)$$
combined with  \eqref{eqkappa} then entails that the Laplace exponent of the L\'evy process  $\xi^+$ is $\Psi(q+\omega_+)-\Psi(\omega_+)$. 

Since $T_1$ is the instant of the first negative jump of $\nu$, standard properties of Poisson random measures then show  that $\Delta_{\hbox{-}} \nu(T_1)=\Delta_{\hbox{-}} \eta^+(T_1)$ has the law $-\Psi(\omega_+)^{-1}\e^{x\omega_+ } \,  \tilde \Lambda(\d x)$, and is independent of the process $(\eta^+(t))_{0\leq t < T_1}= (\xi^+(t))_{0\leq t < T_1}$ (note that the right-extremity $T_1$ of the time interval is now excluded). The latter  has the distribution of the  L\'evy process $\xi^+$ killed at an independent exponential time with parameter $-\Psi(\omega_+)$, and we deduce from above that its Laplace exponent  is $\Psi^+(q)\coloneqq\Psi(q+\omega_+)$. 
This entirely describes the law of $(\eta^+(t))_{0\leq t \leq T_1}$, and we shall now check that the law of $(\ln \hat{\mathcal X}(t))_{0\leq t\leq b_{{\mathcal L}(1)}}$ under $\widehat{\mathcal P}^+_1$ can be depicted in the same way. 

Consider an arbitrary functional $F\geq 0$ on the space of finite c\`adl\`ag paths, and $g:(-\infty,0)\to \R_+$ measurable. We aim at computing the quantity
$$\widehat {\mathcal E}^+_1\left(F(\ln {\mathcal X}_{\varnothing}(s): 0\leq s <b_{{{\mathcal L}(1)}})g\left(\ln {\mathcal X}_{{{\mathcal L}(1)}}(0)-\ln {\mathcal X}_{\varnothing}({{\mathcal L}(1)}-)\right)\right),$$
which, by the definition of the tagged leaf,  we can express in the form (recall that we place ourselves in the homogeneous case $\alpha=0$)
\begin{eqnarray*} & & {\mathcal E}_1\left(\sum_{t>0}F(\ln {\mathcal X}_{\varnothing}(s): 0\leq s <t)g\left(\ln |\Delta_{\hbox{-}}{\mathcal X}_{\varnothing}(t)|-\ln {\mathcal X}_{\varnothing}(t-)\right) |\Delta_{\hbox{-}}{\mathcal X}_{\varnothing}(t)|^{\omega_+}\right)\\
&=& E\left(\sum_{t>0}F(\xi(s): 0\leq s <t)g\left(\ln(1-\exp(\Delta_{\hbox{-}} \xi(t)))\right) |1- \exp(\Delta_{\hbox{-}}\xi(t))|^{\omega_+} \exp(\omega_+\xi(t-))\right).
\end{eqnarray*}
We can now compute this expression using the L\'evy-It\^o decomposition of $\xi$ and the compensation formula. We get 
\begin{eqnarray*} & & E\left(\int_0^{\infty} F(\xi(s): 0\leq s <t)\exp(\xi(t)\omega_+ )\,\d t\right) \int_{(-\infty,0)}g\left(\ln(1-\e^x)\right) |1- \e^x|^{\omega_+} \Lambda(\d x)\\
&=& E\left(\int_0^{\infty} F(\xi(s): 0\leq s <t)\exp(\xi(t)\omega_+ )\,\d t\right) \int_{(-\infty,0)}g\left(y\right) \e^{y\omega_+}  \tilde \Lambda(\d y).
\end{eqnarray*}
This shows that under $\widehat {\mathcal P}^+_1$, the variable $\ln {\mathcal X}_{{{\mathcal L}(1)}}(0)-\ln {\mathcal X}_{\varnothing}({{\mathcal L}(1)}-)$ and the process 
$(\ln \hat{\mathcal X}(t))_{0\leq t<b_{{\mathcal L}(1)}}$ are independent. The former has the law $-\Psi(\omega_+)^{-1}\e^{x\omega_+ } \,  \tilde \Lambda(\d x)$, and the latter that of 
$\xi$ killed according to the multiplicative functional  $\exp(\xi(t)\omega_+)$. It is immediately seen (and well-known) that this yields a  L\'evy process 
with Laplace exponent $\Psi^+(q)=\Psi(q+\omega_+)$, and this completes the check of the first assertion. 

For the second assertion about the conditional distribution of the families of growth-fragmentations which stem from the spine, we shall only check that under $\widehat{\mathcal P}^+_1$, conditionally on $(\hat{\mathcal X}(t))_{0\leq t\leq b_{{\mathcal L}(1)}}$, the processes $\hat{\bf X}_{0,j}$ for $ j\geq 1$ are independent, and more precisely, each $\hat{\bf X}_{0,j}$ has the (conditional) law $\P_x$, with $x$ the absolute size of the $j$-th largest negative jump of $\hat{\mathcal X}$ on $[0, b_{{\mathcal L}(1)}]$. Indeed, the more general assertion in the statement then follows easily from the branching property of cell systems. 

Consider an arbitrary functional $F\geq 0$ on the space of finite c\`adl\`ag paths, and for every $j\geq 1$, a functional $G_j\geq 0$ of the space of multi-set valued paths, and for every $x>0$, set
$$g_j(x)\coloneqq \E_x(G_j({\bf X})).$$
We aim at checking that there is the identity 
$$\widehat {\mathcal E}^+_1\left(F( {\mathcal X}_{\varnothing}(s): 0\leq s \leq b_{{{\mathcal L}(1)}})
\prod_{j=1}^{\infty}G_j(\hat{\bf X}_{0,j})\right)=\widehat {\mathcal E}^+_1\left(F( {\mathcal X}_{\varnothing}(s): 0\leq s \leq b_{{{\mathcal L}(1)}})
\prod_{j=1}^{\infty}g_j(D_j(t))\right),$$
where $(D_j(t))_{j\geq 1}$ denotes the sequence formed by the absolute values of the negative jumps 
of ${\mathcal X}_{\varnothing}$ that occurred strictly before time $t$ and the value of ${\mathcal X}_{\varnothing}$ at time $t$, ranked in the non-increasing order.

By the definition of the tagged cell,  the left hand side equals
$$ {\mathcal E}_1\left(\sum_{t>0}F( {\mathcal X}_{\varnothing}(s): 0\leq s \leq t) |\Delta_{\hbox{-}}{\mathcal X}_{\varnothing}(t)|^{\omega_+}\prod_{j=1}^{\infty}G_j(\hat{\bf X}_{0,j})
\right),$$
and then, by the branching property of cell systems under ${\mathcal P}_1$ and the definition of $g_j(x)$, the latter quantity can be expressed as
$${\mathcal E}_1\left(\sum_{t>0}F( {\mathcal X}_{\varnothing}(s): 0\leq s \leq t) |\Delta_{\hbox{-}}{\mathcal X}_{\varnothing}(t)|^{\omega_+}\prod_{j=1}^{\infty}g_j(D_j(t))
\right).$$
Again, by the definition of the tagged cell, we now see that this quantity coincides with
$$\widehat {\mathcal E}^+_1\left(F( {\mathcal X}_{\varnothing}(s): 0\leq s \leq b_{{{\mathcal L}(1)}})
\prod_{j=1}^{\infty}g_j(D_j(t))\right),$$
and this is precisely what we wanted to show. 
\end{proof}

Theorem \ref{T2} states in particular that under $\widehat{\mathcal P}^+_1$, the tagged cell  has the distribution of the self-similar Markov process $Y^+$ with characteristics 
$(\Phi^+,\alpha)$, and this entails  that the lifetime $b_{{\mathcal L}}$ of the tagged cell is infinite $\widehat{\mathcal P}^+_1$-a.s. when $\alpha\leq 0$, whereas 
  $b_{{\mathcal L}}<\infty$  for 
$\alpha > 0$. In both cases $\lim_{t\to b_{\mathcal L}}
\hat {\mathcal X}(t)=\infty$, $\widehat{\mathcal P}^+_1$-a.s. 
Note also that combining Proposition \ref{P2} and Theorem \ref{T2} enables us to recover the many-to-one formula in Theorem \ref{T1}.

\subsection{Starting the growth-fragmentation with indefinite growth from $0$}
In this section, we shall always assume that $\alpha \leq 0$, so the tagged cell does not explode, and we know further from Corollary \ref{C3}(i)  that $M^+$ is a $\P_x$-martingale. We write $\P_x^+$ for the law of the growth-fragmentation ${\bf X}$ under $\widehat{\mathcal P}^+_x$, and note from Lemma \ref{L12} that there is the relation of absolute continuity
$$x^{\omega_+}\E^+_x(A(t))=\E_x(M^+(t) A (t))$$
for every ${\mathcal F}_t$-measurable variable $A(t)\geq 0$. 
This enables us in particular to view ${\bf X}(t)$ under $\P_x^+$ as a random non-increasing null sequence.
We shall now use the spinal decomposition to investigate the asymptotic behavior of $\P^+_x$ as $x\to 0+$.

 Keeping in mind the connection between growth-fragmentations and 
planar random geometry, a motivation for this stems from the study in \cite{BMR} of non-compact scaling limits of uniform random planar quadrangulations with a boundary, in the regime when the perimeter of the boundary is small compared to the size of the quadragulation.

Roughly speaking, the spinal decomposition consists in assigning the role of Eve to the tagged cell $\hat{\mathcal X}$ rather than to ${\mathcal X}_{\varnothing}$ in the description of cell systems.  
Theorem \ref{T2} thus incites us to introduce another distribution for cell-systems, denoted by ${\mathcal Q}^+_x$, which is defined as follows. Under ${\mathcal Q}^+_x$, the Eve cell ${\mathcal X}_{\varnothing}$ follows the law $Q^+_x$ of the self-similar Markov process $Y^+$ with characteristics $(\Phi^+,\alpha)$. 
Recall that $\lim_{t\to \infty} Y^+(t)=\infty$ a.s., so the absolute values of the negative  jumps of $Y^+$ cannot be ranked in the non increasing order. However, this only a minor issue; indeed, we may use for instance the easy fact that $\lim_{t\to \infty} \e^{-t} Y^+(t)=0$ a.s., and then
rank the jumps sizes and times of $t\mapsto -\e^{-t}{\mathcal X}_{\varnothing}(t)$ in the non-increasing order of their sizes, say 
$(\e^{-\beta_1}y_1, \beta_1),(\e^{-\beta_2}y_2, \beta_2), \ldots$. We 
get the initial sizes and birth-times of the daughter cells at the first generation. That is, conditionally  on ${\mathcal X}_{\varnothing}$, the processes 
${\mathcal X}_1, {\mathcal X}_2, \ldots$ are independent 
processes with laws $P_{y_1}, P_{y_2}, \ldots$.  We iterate for the next generations just as in Sec.~\ref{sec:cellsystem}. We stress that only the mother cell evolves according to $Y^+$, whereas the daughters, granddaughters, ... all evolve according to $X$. Theorem \ref{T2} entails in particular that the law $\P_x^+$ of the growth-fragmentation ${\bf X}$ 
is the same under ${\mathcal Q}^+_x$ as under $\hat{\mathcal P}^+_x$ (genealogies are of course different; in other words the cell processes have different laws under ${\mathcal Q}^+_x$ as under $\hat{\mathcal P}^+_x$, even though the growth-fragmentations they induce have the same distribution. This feature has been analyzed in depth by Q. Shi \cite{QShi}).

An important motivation for introducing the law ${\mathcal Q}^+_x$ for cell systems, is that the law $Q^+_x$ of the Eve cell ${\mathcal X}_{\varnothing}$ possesses  a non-degenerate weak limit $Q^+_0$ as $x\to 0+$. More precisely, since $\Phi^+(0)=0$, $(\Phi^+)'(0)>0$ and $\alpha<0$, the family of laws of Markov processes  $(Q^+_x)_{x\geq 0}$ fulfills the Feller property,  see e.g. \cite{BeYo0}. It follows readily that  as $x\to 0+$, ${\mathcal Q}^+_x$ converges weakly, in the sense of finite-dimensional distributions for families indexed by $\U$,  towards ${\mathcal Q}^+_0$, the law of the cell system under which the Eve cell ${\mathcal X}_{\varnothing}$ has the law $Q^+_0$ and all the other cells are self-similar Markov processes with characteristics $(\Psi, \alpha)$. 

We shall now show that under ${\mathcal Q}^+_0$, the multisets ${\bf X}(t)$ can be still viewed as non-increasing null sequences, a.s. Recall that for every $q>0$ and every multiset ${\bf x}
=\left\{\!\!\left\{x_i: i\in\N \right\}\!\!\right\}$, we write
$F_q({\bf x})\coloneqq \sum_{i\in\N} x_i^q$ for the sum of the elements of ${\bf x}$ raised to the power $q$ and repeated according to their multiplicity. 

\begin{lemma}\label{L8} Assume $\alpha <0$. For every $t\geq 0$ and   $q>0$ with $\kappa(q)\leq 0$, we have $F_{q}({\bf X}(t))<\infty$, ${\mathcal Q}^+_0$-a.s. 
\end{lemma}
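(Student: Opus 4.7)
My plan is to prove the stronger statement that $\mathcal{E}^+_0(F_q(\mathbf{X}(t))) < \infty$, from which almost sure finiteness follows. The key input is the spinal construction of $\mathcal{Q}^+_0$: the Eve cell $\mathcal{X}_\varnothing$ evolves as $Y^+$ with entrance law $Q^+_0$, and at each negative jump of $Y^+$ at time $\beta \leq t$ and absolute size $y_\beta$, an independent standard growth-fragmentation $\mathbf{X}^{(\beta)}$ of law $\P_{y_\beta}$ is grafted. Consequently
\[
F_q(\mathbf{X}(t)) \;=\; Y^+(t)^q + \sum_{\beta \leq t} F_q\bigl(\mathbf{X}^{(\beta)}(t-\beta)\bigr).
\]

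Conditioning on the path of $Y^+$, applying the many-to-one formula of Theorem~\ref{T1} to each graft, and using the self-similarity of $Y^+$ with exponent $\alpha$, each summand has conditional mean $y_\beta^q\,\Phi\bigl((t-\beta) y_\beta^\alpha\bigr)$, where $\Phi(u) := E^+_1(Y^+(u)^{q-\omega_+})$. Since $\kappa$ is convex with $\kappa(\omega_+)=0$ and $\kappa'(\omega_+)>0$, the hypothesis $\kappa(q) \leq 0$ forces $q \leq \omega_+$, so $q-\omega_+ \leq 0$; combined with the fact that $\eta^+$ drifts to $+\infty$ (so $Y^+(u) \to \infty$ a.s.\ under $P^+_1$), a routine argument shows that $\Phi$ is bounded on $(0,\infty)$ by some constant $C$.

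It therefore suffices to bound $E^+_0(Y^+(t)^q)$ and $E^+_0\!\bigl(\sum_{\beta \leq t} y_\beta^q\bigr)$. The first term is finite by scaling, reducing it to a $q$-th moment of the entrance law at time one. For the second, the Lamperti transform recasts the sum as $\sum_{s \leq \tau^+_t,\,\Delta \eta^+(s) < 0} e^{q\eta^+(s-)}(1-e^{\Delta\eta^+(s)})^q$; the compensation formula with the L\'evy measure $\Pi$ of $\eta^+$ identified in Lemma~\ref{Lphi+}(i) reduces matters to the finiteness of $\int_{(-\infty,0)}(1-e^y)^q\,\Pi(\d y)$ together with $E^+_0\!\int_0^t Y^+(u)^{q+\alpha}\d u$. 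The integral against $\Pi$ unfolds, using Lemma~\ref{Lphi+}(i), into the two pieces $\int (1-e^y)^q e^{y\omega_+}\Lambda(\d y)$ and $\int e^{qy}(1-e^y)^{\omega_+}\Lambda(\d y)$ on $(-\infty,0)$, both of which are controlled by the finiteness of $\kappa(q)$ and $\kappa(\omega_+)$.

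The principal obstacle I foresee is the verification of the moment bounds for the entrance law $Q^+_0$: namely that $E^+_0(Y^+(1)^q)$ and $E^+_0(Y^+(1)^{q+\alpha})$ are finite for every $q$ in the relevant range, and that the time integral $\int_0^t E^+_0(Y^+(u)^{q+\alpha})\d u$ converges near $u=0$. These rest on the fine structure of the entrance law of a positive self-similar Markov process whose associated L\'evy process drifts to $+\infty$, and are to be handled via standard but delicate estimates of the type developed in \cite{BeYo0}; the boundary case $q=\omega_+$ in particular may require treating $\Phi$ and the moments by a limiting argument rather than direct estimates.
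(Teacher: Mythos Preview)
Your spinal decomposition and the conditional bound are the same as the paper's: conditioning on the Eve cell $Y^+$ and using that each grafted growth-fragmentation satisfies $\E_{y_\beta}\bigl(F_q(\mathbf X(s))\bigr)\le y_\beta^q$ (since $\kappa(q)\le 0$ makes $F_q(\mathbf X(\cdot))$ a $\P_x$-supermartingale) yields
\[
\mathcal Q^+_0\bigl(F_q(\mathbf X^*(t))\mid \mathcal X_\varnothing\bigr)\ \le\ \sum_{0<\beta<t}\,\bigl|\Delta_{\hbox{-}} Y^+(\beta)\bigr|^q,
\]
which is exactly the paper's starting point. The divergence is in what you do next: you take a further expectation, aiming for $\mathcal E^+_0(F_q(\mathbf X(t)))<\infty$, whereas the paper stays at the almost-sure level. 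The paper then only needs the predictable compensator $c^+(q)\int_0^t Y^+(u)^{q+\alpha}\,\d u$ to be $Q^+_0$-a.s.\ finite, and this follows from the lower-envelope estimates of Chaumont--Pardo~\cite{ChPa} on the small-time growth of $Y^+$ out of $0$, with no entrance-law moment required. Your $L^1$ route instead forces the bounds $E^+_0(Y^+(1)^q)<\infty$ and $E^+_0(Y^+(1)^{q+\alpha})<\infty$, which you correctly identify as the principal obstacle but do not verify; so as written the argument is incomplete, and the extra strength is not needed for the lemma.

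One side remark: your justification for $\Phi$ bounded (``$\eta^+$ drifts to $+\infty$, so $Y^+(u)\to\infty$'') does not control $E^+_1\bigl(Y^+(u)^{q-\omega_+}\bigr)$ when $Y^+$ is small. The clean reason $\Phi\le 1$ is precisely the $\P_1$-supermartingale property of $F_q(\mathbf X(\cdot))$ for $\kappa(q)\le 0$, which gives $\Phi(u)=\E_1(F_q(\mathbf X(u)))\le 1$ directly via Theorem~\ref{T1}.
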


\begin{proof} Consider  the (multi-)set formed by the cells alive at time $t$ once the Eve cell has been removed, ${\bf X}^*(t)\coloneqq {\bf X}(t)\backslash \{{\mathcal X}_{\varnothing}(t)\}$. Since  $F_{q}({\bf X}(t))$ is a $\P_x$-supermartingale, the construction of ${\mathcal Q}^+_0$ entails$${\mathcal Q}^+_0\left( F_{q}({\bf X}^*(t)) \mid {\mathcal X}_{\varnothing}\right)
\leq \sum_{0< s < t}|\Delta_{\hbox{-}} {\mathcal X}_{\varnothing}(s)|^q.$$
To complete the proof, we just need to check that $\sum_{0< s < t}|\Delta_{\hbox{-}} Y^+(s)|^q<\infty$, $Q^+_0$-a.s.

Just as in the proof of Lemma \label{L5'}(i), one can show using Lemma \ref{Lphi+}(i) that if we set 
$$c^+(q)\coloneqq \int_{(-\infty,0)} (1-\e^x)^q \e^{x\omega_+ }\left( \Lambda(\d x)+ \tilde \Lambda(\d x)\right) <\infty,$$
then the predictable compensator  under $Q^+_0$ of $t \mapsto \sum_{0< s \leq t}|\Delta_{\hbox{-}} Y^+(s)|^q$
is $ t\mapsto c^+(q) \int_0^t (Y^+(t))^{q+\alpha}\d s$.

 That the latter is indeed a well-defined (i.e. with finite values) process for every $q>0$ is easy and can be seen e.g. from the results of Chaumont and Pardo \cite{ChPa}. This entails that $F_{q}({\bf X}^*(t))<\infty$ ${\mathcal Q}^+_0$-a.s.
\end{proof}

Lemma \ref{L8} enables us to rank the elements of ${\bf X}(t)$ in the non-increasing order, and we then obtain a sequence in $\ell^q(\N)$, 
${\mathcal Q}^+_0$-a.s.  We write $\P^+_0$ for the law of ${\bf X}$ under ${\mathcal Q}^+_0$, and claim the following
limit theorem in the spectrally negative case (we believe that this restriction should be essentially superfluous, however it makes the argument somewhat simpler). 

\begin{corollary} \label{C8} Assume $\alpha <0$ and that cells have no positive jumps a.s., that is $\Lambda((0,\infty))=0$. For any $q>0$ with $\kappa(q)\leq 0$,
$\P^+_x$ converges  weakly as $x\to 0$ towards  $\P^+_0$, in the sense of finite dimensional distributions for processes with values in $\ell^q(\N)$.
\end{corollary}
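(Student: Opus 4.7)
The plan is to combine the spinal decomposition of Theorem~\ref{T2} with the Feller property of $Y^+$ at zero. By Theorem~\ref{T2}, the law $\P^+_x$ coincides with the law of $\mathbf{X}$ constructed from the cell system $\mathcal{Q}^+_x$, whose Eve cell follows $Q^+_x$ and whose descendants evolve as independent copies of the self-similar Markov process with characteristics $(\Psi,\alpha)$; the same identification holds for $\P^+_0$ and $\mathcal{Q}^+_0$. It therefore suffices to prove that under $\mathcal{Q}^+_x$, the vector $(\mathbf{X}(t_1),\ldots,\mathbf{X}(t_k))$ converges in distribution in $\ell^q(\mathbb{N})^k$ as $x\to 0+$.

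I would next invoke the Feller property recalled just before Lemma~\ref{L8}: $Q^+_x \Rightarrow Q^+_0$ on the Skorokhod space of c\`adl\`ag paths. Since $\Lambda((0,\infty))=0$, Lemma~\ref{Lphi+}(i) shows that $\eta^+$, and hence $Y^+$, has no positive jumps, so by Skorokhod embedding I may work on a common probability space on which the Eve cells $\mathcal{X}^{(x)}_\varnothing$ converge almost surely in $J_1$ on $[0,T]$ to $\mathcal{X}^{(0)}_\varnothing$. For any level $\delta>0$ that is a.s.\ not a jump size of the limit, the finitely many jumps of $\mathcal{X}^{(x)}_\varnothing$ of absolute size $\geq \delta$ converge jointly in time and size to those of $\mathcal{X}^{(0)}_\varnothing$. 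Conditionally on the Eve cell, the subtrees rooted at its negative jumps are independent growth-fragmentations with laws $\P_{\Delta_i}$, and the Feller property of $\mathbf{X}$ in its starting point (which follows from self-similarity and Theorem~2 of \cite{BeGF}) permits a joint coupling of the $N$ largest such subtrees. This yields, for every fixed $N$, the almost-sure $\ell^q$-convergence at each $t_j$ of the multiset formed by the Eve cell together with the growth-fragmentations generated by its $N$ largest daughters.

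The last step, and the main technical obstacle, is to upgrade truncated convergence to full $\ell^q$-convergence via a uniform tail bound. Since $\kappa(q)\leq 0$, the map $y\mapsto y^q$ is excessive for $\mathbf{X}$ by Theorem~2 of \cite{BeGF}, so, writing $(\Delta^{(x)}_i,\beta^{(x)}_i)_i$ for the jump sequence of $\mathcal{X}^{(x)}_\varnothing$ ranked in decreasing size,
\begin{equation*}
\mathbb{E}\!\left[\sum_{i>N,\,\beta^{(x)}_i\leq t_j} F_q\!\left(\mathbf{X}^{(x),i}(t_j-\beta^{(x)}_i)\right)\,\bigg|\,\mathcal{X}^{(x)}_\varnothing\right] \;\leq\;\sum_{i>N}\bigl(\Delta^{(x)}_i\bigr)^q.
\end{equation*}
The predictable compensator computation in the proof of Lemma~\ref{L8} identifies $\sum_i(\Delta^{(x)}_i)^q$ (up to a martingale term) with a constant multiple of $\int_0^{T}(Y^+(s))^{q+\alpha}\,\d s$; combined with integrability of the Lamperti exponential functional along the lines of Lemma~\ref{L5'}, this should yield uniform tail control $\sup_{x\text{ small}}\sum_{i>N}(\Delta^{(x)}_i)^q\to 0$ in probability as $N\to\infty$. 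Together with the finite-$N$ convergence of the previous paragraph, this concludes the argument. The delicate point is precisely the uniformity of this tail bound as $x\downarrow 0$, which I would establish by first checking that the entrance law $Q^+_0$ inherits the compensator identity in a form compatible with the Skorokhod coupling of the Eve cells.
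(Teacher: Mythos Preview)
Your strategy is conceptually reasonable but leaves the crucial step unproved, and it misses the key simplification that the paper exploits. You acknowledge yourself that the uniform tail bound $\sup_{x\text{ small}}\sum_{i>N}(\Delta^{(x)}_i)^q\to 0$ is ``the delicate point'', and then offer only a sketch (``should yield'', ``I would establish''). That is not a proof: under your Skorokhod coupling you would need to control, uniformly in $x$, the contribution of small jumps of $\mathcal{X}^{(x)}_\varnothing$ near time $0$, and the compensator identity of Lemma~\ref{L8} gives a pathwise bound for each fixed $x$ but no obvious mechanism for uniformity across the family. Your appeal to a ``Feller property of $\mathbf{X}$ in its starting point'' in $\ell^q$ is likewise not established by Theorem~2 of \cite{BeGF} as stated.

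The paper bypasses all of this by a single coupling that works directly under $\mathcal{Q}^+_0$: because $\Lambda((0,\infty))=0$, the Eve cell $Y^+$ has no positive jumps, so the first passage time $T_x=\inf\{t\geq 0:\mathcal{X}_\varnothing(t)=x\}$ is a.s.\ finite and tends to $0$ as $x\downarrow 0$. By the strong Markov property of the Eve cell, if one discards the daughters born before $T_x$ together with their descent, the surviving population shifted by $T_x$ has exactly the law $\P^+_x$. Thus all the measures $\P^+_x$ are realised simultaneously on the probability space of $\mathcal{Q}^+_0$, and the proof reduces to two easy estimates: (i) the $\ell^q$-mass of the discarded cells is bounded by $\sum_{0<s<T_x\wedge t}|\Delta_{\hbox{-}}\mathcal{X}_\varnothing(s)|^q\to 0$ a.s.\ (no uniformity issue, since one works under a single measure), and (ii) the time shift $T_x\to 0$ is handled by the continuity argument of Corollary~4 in \cite{BeGF}. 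This first-passage-time embedding is the idea you are missing; it turns what you identify as the hard step into a triviality.
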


\begin{proof} We work under ${\mathcal Q}^+_0$. Introduce  the first passage times $T_x\coloneqq \inf\{t\geq 0: {\mathcal X}_{\varnothing}(t)=x\}$ for every $x>0$ and observe  that $T_x<\infty$ and $\lim_{x\to 0} T_x=0$ a.s. (note that we use here the assumption of absence of positive jumps). 
Imagine that we kill all the daughter cells ${\mathcal X}_i$ for  $i\in\N$ which are born before $T_x$ together with their descent,  and let ${\bf X}^{(x)}(t)$ be the family of surviving cells which are alive at time $t$. By the Markov property of the Eve cell, we see that the shifted process $({\bf X}^{(x)}(t+T_x))_{t\geq 0}$ has the law $\P^+_x$. 

Second, for every $q>0$ with $\kappa(q)< 0$, the same argument as in the proof of Lemma \ref{L8} yields  
$${\mathcal Q}^+_0\left( F_{q}({\bf X}(t)\backslash {\bf X}^{(x)}(t)) \mid {\mathcal X}_{\varnothing}\right)
\leq \sum_{0< s < T_x\wedge t}|\Delta_{\hbox{-}} {\mathcal X}_{\varnothing}(s)|^q,$$
from which we infer 
$$\lim_{x\to 0+} F_{q}({\bf X}(t)\backslash {\bf X}^{(x)}(t))=0\qquad \hbox{a.s.}$$
Third, we recall from Lemma \ref{L8} that the family ${\bf X}(t)$ is $q$-summable a.s. The branching property enables us to apply the same argument as in the proof of Corollary 4 in \cite{BeGF}; this yields
$$\lim_{x\to 0+}\sum_{u\in \U}|{\mathbbm{1}}_{t\geq b_u}{\mathcal X}_u(t-b_u)-{\mathbbm{1}}_{t+T_x\geq b_u}{\mathcal X}_u(t+T_x-b_u)|^q=0\qquad \hbox{in probability}$$
and the statement now follows easily. \end{proof}

  Here is  an application  of the preceding result to the extinction time. Specifically, recall that when $\alpha <0$, the growth-fragmentation ${\bf X}$ under $ \mathbb{P}_{x}$ for $x>0$ is absorbed at $(0,0\ldots)$ after an a.s.  finite time ${\epsilon}\coloneqq \inf\{t\geq 0: {\bf X}(t)\equiv 0\}$; see Corollary 3 in \cite{BeGF}. We now obtain the following polynomial lower-bound estimates for the tail distribution of this absorption time, which contrasts sharply with the exponential decay proved by Haas for self-similar (pure) fragmentation processes
  (see Proposition 14 in \cite{Haas}). We use the notation $f\asymp g$ for a pair of functions $f,g: \R_+\to (0,\infty)$ such that the ratio $f(t)/g(t)$ remains bounded away from $0$ and $\infty$ as $t\to \infty$.
  For the sake of simplicity, we concentrate on the spectrally negative case, although this restriction is probably superfluous. 

\begin{corollary}\label{C4} 
 Assume $\alpha <0$ and that cells have no positive jumps a.s., that is $\Lambda((0,\infty))=0$. 
Then  we have $\P_1({\epsilon}>t)\asymp t^{\omega_{+}/\alpha}$.

\end{corollary}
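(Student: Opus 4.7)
The plan is to derive matching upper and lower bounds of order $t^{\omega_+/\alpha}$ for $\P_1(\epsilon>t)$, using the many-to-one identity (Theorem~\ref{T1}), the spinal decomposition (Theorem~\ref{T2}), and the self-similarity of $Y^+$.

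\emph{Upper bound.} By Markov's inequality applied to the number $N(t)$ of cells alive at time $t$, one has $\P_1(\epsilon>t)\le \E_1(N(t))$. Theorem~\ref{T1} with $f\equiv 1$ identifies $\E_1(N(t))=E^+_1(Y^+(t)^{-\omega_+})$. Using that $Y^+(t)$ under $P^+_1$ has the same law as $t^{-1/\alpha}Y^+(1)$ under $P^+_{t^{1/\alpha}}$,
\[
E^+_1(Y^+(t)^{-\omega_+})\ =\ t^{\omega_+/\alpha}\,E^+_{t^{1/\alpha}}\bigl(Y^+(1)^{-\omega_+}\bigr).
\]
Since $\alpha<0$ we have $t^{1/\alpha}\to 0$ as $t\to\infty$, and the upper bound reduces to checking that $\sup_{0<x\le 1}E^+_x(Y^+(1)^{-\omega_+})<\infty$. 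This follows from the entrance-law theory of positive self-similar Markov processes: since $\eta^+$ drifts to $+\infty$ and $\alpha<0$, $(P^+_x)$ converges weakly to a non-degenerate limit $P^+_0$ as $x\to 0+$, and the absence of positive jumps (our standing assumption) provides the uniform integrability of $Y^+(1)^{-\omega_+}$ along this family.

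\emph{Lower bound.} Since $M^+$ is a $\P_1$-martingale and $M^+(t)=0$ on $\{\epsilon\le t\}$, we have $\E_1(M^+(t);\,\epsilon>t)=1$, and Cauchy-Schwarz yields
\[
\P_1(\epsilon>t)\ \ge\ \frac{1}{\E_1(M^+(t)^2)}\ =\ \frac{1}{\widehat{\mathcal E}^+_1(M^+(t))}.
\]
Under $\widehat{\mathcal P}^+_1$, the spinal decomposition of Theorem~\ref{T2} represents $M^+(t)$ as the sum of the tagged cell's contribution $\hat{\mathcal X}(t)^{\omega_+}$ and of the independent sub-populations $\hat{\mathbf X}_{n,j}$ seeded by the negative jumps of the spine. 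Applying the martingale identity $\E_y(M^+(s))=y^{\omega_+}$ to each sub-population, and computing the expected sum of $\omega_+$-powers of the spine jumps via the compensation formula together with Lemma~\ref{Lphi+}, yields
\[
\widehat{\mathcal E}^+_1\bigl(M^+(t)\bigr)\ =\ E^+_1\bigl(Y^+(t)^{\omega_+}\bigr)\ +\ C\int_0^t E^+_1\bigl(Y^+(u)^{\omega_++\alpha}\bigr)\,\d u,
\]
with $C=\int_{y<0}(1-\e^y)^{\omega_+}\Pi(\d y)<\infty$, where $\Pi$ is the L\'evy measure of $\eta^+$. The scaling identity $E^+_1(Y^+(u)^q)=u^{-q/\alpha}E^+_{u^{1/\alpha}}(Y^+(1)^q)$, combined with the boundedness of the entrance-law moments $E^+_x(Y^+(1)^{\omega_+})$ and $E^+_x(Y^+(1)^{\omega_++\alpha})$ as $x\to 0+$, shows that both summands are $O(t^{-\omega_+/\alpha})$, whence the lower bound.

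\emph{Main obstacle.} The core technical point in both halves is the uniform control of the entrance-law moments $E^+_x(Y^+(1)^q)$ as $x\to 0+$, for $q\in\{-\omega_+,\omega_+,\omega_++\alpha\}$. The spectrally negative hypothesis $\Lambda((0,\infty))=0$ enters essentially here: by Lemma~\ref{Lphi+} it transfers to $\eta^+$, ensuring that $Y^+$ has the integrability required to pass to the entrance law at~$0$ and legitimizing the scaling identifications above.
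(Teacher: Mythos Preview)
Your route differs from the paper's on both halves, and each half rests on moment assertions that you do not justify and that are genuinely delicate.

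\textbf{Upper bound.} The Markov bound $\P_1(\epsilon>t)\le \E_1(N(t))=E^+_1\bigl(Y^+(t)^{-\omega_+}\bigr)$ is only useful if the right-hand side is finite, and you give no reason it should be. This negative moment is governed by $\Phi^+(-\omega_+)=\kappa(0)=-{\tt k}+\Lambda((-\infty,0))$, which is $+\infty$ whenever $\Lambda$ has infinite activity on $(-\infty,0)$---the generic situation, not excluded by the hypotheses---and in that regime each cell spawns infinitely many daughters on any positive time interval, so one should not expect the expected cell count to be finite. The claim that spectral negativity ``provides the uniform integrability of $Y^+(1)^{-\omega_+}$'' is unsubstantiated. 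The paper's upper bound takes a completely different path: a comparison with the homogeneous ($\alpha=0$) growth-fragmentation together with Doob's maximal inequality first yields the crude estimate $\P_x(\epsilon>t)\le c\,t^{q/\alpha}x^q$ for any $q$ with $\kappa(q)<0$, and then the branching property at time $t$ combined with the bound $\E_1\bigl(\sum_i X_i^q(t)\bigr)=O(t^{(\omega_+-q)/\alpha})$ upgrades the exponent to $\omega_+/\alpha$.

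\textbf{Lower bound.} The Cauchy--Schwarz reduction to $\E_1(M^+(t)^2)=\widehat{\mathcal E}^+_1(M^+(t))$ and the spinal formula are correct, but you then need $E^+_x(Y^+(1)^{\omega_+})$ and $E^+_x(Y^+(1)^{\omega_++\alpha})$ to remain bounded as $x\to 0+$. These are positive-moment conditions on the entrance law of $Y^+$ that do not follow from spectral negativity and that you merely assert. The paper avoids second moments altogether: from self-similarity and Proposition~\ref{P2} one has the exact identity $t^{-\omega_+/\alpha}\P_1(\epsilon>t)=\E^+_{t^{1/\alpha}}(1/M^+(1))$, and then Corollary~\ref{C8} (weak convergence $\P^+_x\Rightarrow\P^+_0$) together with Lemma~\ref{L8} give $\liminf_{t\to\infty} t^{-\omega_+/\alpha}\P_1(\epsilon>t)\ge \E^+_0\bigl(a\wedge(1/M^+(1))\bigr)>0$ for any $a>0$. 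Note that the paper itself remarks, just after the proof, that even $\E^+_0(1/M^+(1))<\infty$ is not obvious from the construction; your route implicitly requires the harder statement $\E^+_0(M^+(1))<\infty$.
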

\begin{proof} We shall first prove the lower bound
$$ \liminf_{t\to \infty} t^{-\omega_{+}/\alpha} \P_1({\epsilon}>t)\geq \E^+_{0} (1/M^+(1)) >0.$$

We use self-similarity and write $\P_1({\epsilon}>t)=\P_{t^{1/\alpha}}({\epsilon}>1)$. Then
recalling that extinction does not occur $\widehat {\mathcal P}^+_x$-a.s., we have from Proposition \ref{P2}  that 
$$t^{-\omega_+/\alpha}\P_1({\epsilon}>t)= \widehat {\mathcal E}^+_{t^{1/\alpha}} (1/M^+(1))= \E^+_{t^{1/\alpha}} (1/M^+(1)).$$

Corollary \ref{C8} shows that the law of $M^+(t)$ under $\P^+_{t^{1/\alpha}}$ converges to that of  $M^+(t)$ under $\P^+_0$ as   $t\to \infty$. Then, for any $a>0$, we have
$$\lim_{t\to\infty} \E^+_{t^{1/\alpha}} (a\wedge(1/M^+(1))) = \E^+_{0} (a\wedge(1/M^+(1)))$$
and the right-hand side is a strictly positive quantity since we know from Lemma \ref{L8} that $M^+(1)<\infty$ $\P^+_0$-a.s.

In order to establish the upper-bound
$$ \limsup_{t\to \infty} t^{-\omega_{+}/\alpha} \P_1({\epsilon}>t)<\infty,$$
we need first to consider the homogeneous growth-fragmentation which corresponds to taking $\alpha=0$. Specifically, we write
$X^{(0)}(t)=\exp(\xi(t))$ for the self-similar Markov process with characteristics $(\Psi, 0)$, and then ${\mathcal X}^{(0)}$ and ${\bf X}^{(0)}$ for the corresponding cell-system and homogeneous growth-fragmentation. It has been shown in the proof of Corollary 3 in \cite{BeGF} that the growth-fragmentations ${\bf X}$ and ${\bf X}^{(0)}$ can be constructed simultaneously, so that for every $q>0$ with $\kappa(q)<0$, there is the following upper bound
for the extinction time  of the growth-fragmentation ${\bf X}$:
$$\epsilon \leq c(q) S^{-\alpha/q},$$
where $c(q)>0$ is some constant,
$$S\coloneqq \sup\left\{\e^{-t\kappa(q)} (X_*^{(0)}(t))^q: t\geq 0 \right\},$$
and $X_*^{(0)}(t)$ denotes the size of the largest cell  in the family ${\bf X}^{(0)}(t)$. 

Recall from Proposition 3 of \cite{BeGF} that ${\bf X}^{(0)}(t)=
(X^{(0)}_1(t), \ldots)$ can be viewed as a compensated-fragmentation process  (see Definition 3  in \cite{BeCF}), and then from Corollary 3 in \cite{BeCF} (it is assumed in \cite{BeCF} that $q\geq 2$, but actually only $\kappa(q)<\infty$ is needed) that the process
$$\exp(-t \kappa(q))\sum_{i=1}^{\infty} (X^{(0)}_i(t))^{q}\,,\qquad t\geq 0$$
is a martingale.  Doob's maximal inequality yields  
$\P\left( S>x\right)=O( x^{-1})$,  thus 
$\P_1(\epsilon>t)\leq c t^{q/\alpha}$ for some constant $c<\infty$, and then by self-similarity, 
$$\P_x(\epsilon>t)\leq c t^{q/\alpha}  x^q.$$

To finish the proof, we simply observe from an application of the branching property of growth-fragmentation processes and the upper-bound above, that
$$\P_1(\epsilon>2t)= \E_1(\P_1(\epsilon>2t \mid {\mathcal F}_t)) \leq c  t^{q/\alpha}\E_1\left(\sum_{i=1}^{\infty} X^q_i(t)\right).$$
The same argument as in the proof of Corollary \ref{C3}(ii), now  using Theorem 5 in \cite{ArRi} instead of Lemma 4 of \cite{Riv}, shows that $\E_1\left(\sum_{i=1}^{\infty} X^q_i(t)\right)
\leq c(q) t^{(\omega_+-q)/\alpha}$, which completes the proof.

\end{proof}

\noindent{\bf Remark.}  We conjecture that, as a matter of facts, 
$$ \lim_{t\to \infty} t^{-\omega_{+}/\alpha} \P_1({\epsilon}>t)= \E^+_{0} (1/M^+(1)),$$ 
but we do not have a rigorous proof of this. We note further that the arguments above show rather indirectly that
$\E^+_{0} (1/M^+(t)) \in(0,\infty)$ for all $t>0$, a fact which does not seem to follow straightforwardly from the construction of ${\mathcal Q}^+_0$. More precisely, it is then easy to deduce that the process 
$(1/M^+(t): t>0)$ is a $\P^+_0$ supermartingale.

\subsection{Tagging a cell randomly according to the intrinsic area}
\label{sec:area}

Throughout this section, we assume that Cram\'er's hypothesis \eqref{omega-} holds. 
Similarly to Sec.~\ref{sec:plus}, but with simplifications due to the uniform integrability of ${\mathcal M}^-$ (see Lemma \ref{L5'}), we introduce the probability measure $\widehat{\mathcal P}^-_x$ describing the joint law of a tagged leaf ${\mathcal L}$ on $\partial \U$ 
and a cell system ${\mathcal X}=({\mathcal X}_u: u\in\U)$. The law of $({\mathcal X}_u: u\in\U)$ under 
$\widehat{\mathcal P}^-_x$  is absolutely continuous with respect to  ${\mathcal P}_x$  with density $x^{-\omega_{-}}{\mathcal M}^-(\infty)$, and conditionally on $({\mathcal X}_u: u\in\U)$, the random leaf ${\mathcal L}$
has the law ${\mathcal A}(\cdot) /{\mathcal M}^-(\infty)$.
Theorem \ref{T3} entails in particular that for $\alpha \geq 0$, the distribution $\P^-_x$ of the growth-fragmentation process ${\bf X}$ under $\widehat{\mathcal P}^-_x$ is absolutely continuous with respect to $\P_x$, with density
$x^{-\omega_-}M^-(\infty)$. 
 We use the same notation as that introduced in Sec.~\ref{sec:plus} and state without proof analogous results.

\begin{proposition} \label{P3} 
 For every $t\geq 0$, every measurable function $f: [0,\infty)\to [0,\infty)$, and every $\bar {\mathcal F}_t$-measurable random variable $B(t)\geq 0$, we have
$${\mathcal E}_x\left(\sum_{i=1}^{\infty} X^{\omega_{-}}_i(t)f(X_i(t))B(t) \right) = x^{\omega_{-}} \widehat{\mathcal E}^-_x(f(\hat{\mathcal X}(t))B(t)),$$
with the convention that $f(\partial)=0$.  
\end{proposition}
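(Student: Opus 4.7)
The plan is to follow the strategy of the proof of Proposition~\ref{P2}, with $\omega_-$ in place of $\omega_+$ and the tagged leaf now chosen according to the intrinsic area measure $\mathcal{A}$. The preliminary identity I would first establish, which plays the role of \eqref{eqlam} in this setting, is that for every $n\geq 0$, every $v\in\U$ with $|v|=n+1$, and every event $A \in \mathcal{G}_{n+1}$,
$$\widehat{\mathcal{P}}^-_x\!\left(\{\mathcal{L}(n+1)=v\}\cap A\right) \;=\; x^{-\omega_-}\,\mathcal{E}_x\!\left(\mathbf{1}_A\,\mathcal{X}_v^{\omega_-}(0)\right).$$
This follows from the definition of $\widehat{\mathcal{P}}^-_x$ upon conditioning on $\mathcal{G}_{n+1}$, together with the identity $\mathcal{E}_x(\mathcal{A}(B(v))\mid\mathcal{G}_{n+1})=\mathcal{X}_v^{\omega_-}(0)$. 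The latter is a consequence of the branching property of cell systems and of the fact that $\mathcal{E}_y(\mathcal{M}^-(\infty)) = y^{\omega_-}$ under $\mathcal{P}_y$ (via the martingale property of $\mathcal{M}^-$ and its uniform integrability, see Lemma~\ref{L5'}).

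I would then first treat the case where $B(t)$ is $\bar{\mathcal{F}}_t\wedge\mathcal{G}_k$-measurable for some fixed $k$. Since $f(\partial)=0$ and $\hat{\mathcal{X}}(t)=\partial$ on $\{t\geq b_{\mathcal{L}}\}$, and since the events $\{b_{\mathcal{L}(n+1)}>t\}$ increase to $\{b_{\mathcal{L}}>t\}$, monotone convergence gives
$$\widehat{\mathcal{E}}^-_x\!\left(f(\hat{\mathcal{X}}(t))\,B(t)\right) = \lim_{n\to\infty}\widehat{\mathcal{E}}^-_x\!\left(f(\hat{\mathcal{X}}(t))\,B(t)\,\mathbf{1}_{b_{\mathcal{L}(n+1)}>t}\right).$$
For $n\geq k$, an application of the preliminary identity rewrites each term inside the limit as
$$x^{-\omega_-}\,\mathcal{E}_x\!\left(B(t)\sum_{|v|=n+1}\mathcal{X}_v^{\omega_-}(0)\,\mathbf{1}_{b_v>t}\,f\!\left(\mathcal{X}_{u^t(v)}(t-b_{u^t(v)})\right)\right),$$
where $u^t(v)$ denotes the unique ancestor of $v$ alive at time $t$.

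To conclude, I would group the sum according to the value of $u^t(v)=u$ and invoke Lemma~\ref{L6} combined with the Markov property of cell processes: conditionally on $\bar{\mathcal{F}}_t$, the family of descendants of a cell $u$ alive at $t$ which are born after time $t$ forms a fresh cell system of law $\mathcal{P}_{\mathcal{X}_u(t-b_u)}$, and for $n\geq|u|$ the inner sum $\sum_{|v|=n+1,\,u^t(v)=u}\mathcal{X}_v^{\omega_-}(0)$ coincides with the value at generation $n-|u|$ of the Malthusian martingale of this fresh system, and hence has conditional mean $\mathcal{X}_u(t-b_u)^{\omega_-}$. Plugging this in, letting $n\to\infty$ by monotone convergence, and identifying $\mathbf{X}(t)$ with the multiset $\{\!\!\{\mathcal{X}_u(t-b_u):u\text{ alive at }t\}\!\!\}$ then yields the stated identity; a standard monotone class argument removes the restriction on $B(t)$. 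The main subtlety will lie in the precise identification of the inner sum with the Malthusian martingale of the fresh cell system rooted at $u$ at time $t$, but this is natural in view of the construction of $\mathcal{A}$ as a limit along generations and parallels the corresponding step in the proof of Proposition~\ref{P2}.
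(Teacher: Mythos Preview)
Your approach is correct and is precisely the argument the paper has in mind: Proposition~\ref{P3} is stated there without proof, as the direct analogue of Proposition~\ref{P2} with $\omega_-$ in place of $\omega_+$ and the tagged leaf sampled from the intrinsic area measure~$\mathcal{A}$. One small correction: in your preliminary identity the event $A$ should lie in $\mathcal{G}_n$, not $\mathcal{G}_{n+1}$, since $\mathcal{E}_x(\mathcal{A}(B(v))\mid\mathcal{G}_{n+1})=\sum_i\mathcal{X}_{vi}^{\omega_-}(0)$ rather than $\mathcal{X}_v^{\omega_-}(0)$ (conditioning on $\mathcal{G}_n$ does give $\mathcal{X}_v^{\omega_-}(0)$); this does not affect the argument, as the random variables to which you apply the identity are all $\mathcal{G}_n$-measurable.
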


In turn, the  spinal decomposition has now the following form. The function $\Phi^-(q)\coloneqq \kappa(q+\omega_-)$ is the Laplace exponent of a L\'evy process $\eta^-$, whose L\'evy measure can be expressed in the form $\e^{\omega_-x}(\Lambda (\d x)+ \tilde \Lambda(\d x))$. This enables us to mark the negative jumps of $\eta^-$ just as in Sec.~\ref{sec:plus}, and the number $N^-(t)$ of marked jumps up to time $t$ is a Poisson process with intensity $-\Psi(\omega_-)$. We then introduce the law $Q^-_x$ of the self-similar Markov process $Y^-$ with characteristics $(\Phi^-,\alpha)$ started from $x$. In the obvious notation (in particular $I^-\coloneqq \int_{0}^{\infty}\exp(-\alpha \eta^-(s))\d s$ and $\tau^-_t$ refers to the Lamperti's time substitution in this setting), we arrive at:

\begin{theorem}\label{T9}  The distribution of $(\hat{\mathcal X}(t), n_t)_{0\leq t< b_{\mathcal L}}$ under $\widehat{\mathcal P}^-_1$ is the same as that of 
$$(Y^-(t), N^-(\tau^-_t))_{0\leq t < I^-}.$$
Further, conditionally on $(\hat{\mathcal X}(t), n_t)_{0\leq t< b_{\mathcal L}}$,
the processes $\hat{\bf X}_{n,j}$ for $ n\geq 0$ and $ j\geq 1$ are independent, and more precisely, each $\hat{\bf X}_{n,j}$ has the (conditional) law $\P_x$, with $x$ the absolute size of the negative jump of $\hat{\mathcal X}$ with label $(n,j)$. 
\end{theorem}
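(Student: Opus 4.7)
The plan is to mimic the proof of Theorem \ref{T2}, substituting the Malthusian root $\omega_-$ for $\omega_+$ throughout. Lamperti's time-substitution reduces matters to the homogeneous case $\alpha=0$, and the branching property of cell systems together with the Markov structure of $\eta^-$ further reduces the claim to identifying, under $\widehat{\mathcal P}^-_1$, the joint law of the Eve cell $({\mathcal X}_\varnothing(s))_{0\leq s<b_{{\mathcal L}(1)}}$, the first spinal displacement $\ln{\mathcal X}_{{\mathcal L}(1)}(0)-\ln{\mathcal X}_\varnothing({{\mathcal L}(1)}-)$, and the off-spine growth-fragmentations generated by the other negative jumps of ${\mathcal X}_\varnothing$ on $[0,b_{{\mathcal L}(1)}]$.

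Two basic ingredients are needed. First, the analogue of \eqref{eqlam}: using the martingale property of ${\mathcal M}^-$ and the tower property applied to ${\mathcal A}(B(v))/{\mathcal M}^-(\infty)$, one checks that
$$\widehat{\mathcal P}^-_x\bigl({\mathcal L}(n+1)=v\mid {\mathcal G}_n\bigr)=\frac{{\mathcal X}_v^{\omega_-}(0)}{{\mathcal M}^-(n)},\qquad |v|=n+1.$$
Second, Lemma \ref{Lphi+} applied to $\Phi^-=\kappa(\cdot+\omega_-)$ yields the decomposition $\eta^-=\xi^-+\nu$ with $\xi^-$ and $\nu$ independent, $\nu$ a compound Poisson process of intensity $-\Psi(\omega_-)=\int_{(-\infty,0)}\e^{\omega_- x}\tilde\Lambda(\d x)<\infty$ whose jump-size law is proportional to $\e^{\omega_- x}\tilde\Lambda(\d x)$, and $\xi^-$ a L\'evy process with Laplace exponent $\Psi(\cdot+\omega_-)-\Psi(\omega_-)$. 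The first jump time $T_1$ of $\nu$ is then exponentially distributed, independent of $(\xi^-(s))_{s<T_1}$ and of $\Delta\nu(T_1)$.

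With these inputs in hand, the first-generation identification proceeds by computing, for arbitrary bounded $F$ and $g$, the expectation of $F({\mathcal X}_\varnothing(s):s<b_{{\mathcal L}(1)})\,g(\ln{\mathcal X}_{{\mathcal L}(1)}(0)-\ln{\mathcal X}_\varnothing({{\mathcal L}(1)}-))$ under $\widehat{\mathcal P}^-_1$. Combining the biasing formula above with the Lamperti transform ${\mathcal X}_\varnothing=\exp(\xi)$ and the L\'evy compensation formula, this expectation rewrites as
$$E\!\left(\int_0^\infty F(\xi(s):s<t)\,\e^{\omega_-\xi(t)}\,\d t\right)\int_{(-\infty,0)}g(y)\,\e^{\omega_- y}\,\tilde\Lambda(\d y),$$
which matches exactly the joint law of $((\xi^-(s))_{s<T_1},\Delta\nu(T_1))$. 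Iterating along the spine via the branching property of cell systems identifies $(\hat{\mathcal X}(t), n_t)_{0\leq t<b_{\mathcal L}}$ in law with $(Y^-(t), N^-(\tau^-_t))_{0\leq t<I^-}$.

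The conditional independence and marginal laws $\P_x$ of the off-spine growth-fragmentations $\hat{\bf X}_{n,j}$ follow by exactly the test-functional argument used at the end of the proof of Theorem \ref{T2}: one expresses a joint expectation under $\widehat{\mathcal P}^-_1$ as a biased sum over the negative jumps of ${\mathcal X}_\varnothing$ and then invokes the branching property under ${\mathcal P}_1$ to factor out the off-spine contributions. I do not anticipate any genuine obstacle here; the subtlety present in Theorem \ref{T2} coming from the fact that ${\mathcal M}^+$ only converges to $0$ is absent, since ${\mathcal M}^-$ is uniformly integrable by Lemma \ref{L5'} and no delicate truncation procedure is needed.
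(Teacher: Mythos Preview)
Your proposal is correct and follows exactly the approach the paper intends: the paper explicitly states Theorem \ref{T9} \emph{without proof} as an analogue of Theorem \ref{T2}, so mimicking that proof with $\omega_-$ in place of $\omega_+$ is precisely what is expected. Your verification of the analogue of \eqref{eqlam} via the intrinsic area measure and the uniform integrability of ${\mathcal M}^-$ is the one step that deserves a line of justification (and you give it), after which the compensation-formula computation and the off-spine factorization go through verbatim.
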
 

An important difference compared to Sec.~\ref{sec:plus} is that now the L\'evy process $\eta^-$ drifts to $-\infty$, so $Y^-$ is absorbed at $\partial$ after an a.s. finite time if $\alpha <0$ (and more precisely, $Y^-(\zeta-)=0$  $Q^-_x$-a.s.), or has an infinite lifetime and converges to $0$ at infinity if $\alpha\geq 0$. In both cases, we can rank the negative jumps of $Y^-$ in the decreasing order, and introduce the law ${\mathcal Q}^-_x$ of the cell system in which the Eve cell evolves according to $Y^-$ and all the other cells according to $X$. The spinal decomposition then shows that the distribution of the growth-fragmentation ${\bf X}$ under ${\mathcal Q}^-_x$ is $\P^-_x$ (recall that when $\alpha \geq 0$, the latter is absolutely continuous with respect to $\P_x$, with density $x^{-\omega_-}M^-(\infty)$).

\section{A distinguished  
family of growth-fragmentations}
\label{sec:gen}

The goal of this section is twofold. First, we give different ways to characterize the law of a self-similar growth-fragmentation. Recall that by \cite{QShi}, the law of a self-similar growth-fragmentation is characterized by a pair $(\kappa,\alpha)$ where $\kappa$ is the cumulant function associated with a driving L\'evy process and $\alpha \in \R$ is the self-similarity parameter. Theorem \ref{thm:kappaphi} shows that the cumulant function $\kappa$ is characterized by one of the following quantities:
\begin{itemize}
\item any shift of the cumulant function $\kappa$, that is any function $q \mapsto \kappa(\omega+q)$ for fixed $\omega \geq 0$. As a consequence, the law of the process $Y^{+}$ (or of $Y^{-}$) introduced in Sec.~\ref{sec:plus} (resp.~Sec.\ref{sec:area}) characterizes the law of a self-similar growth-fragmentation;
\item a particular cell process, which  roughly speaking describes the evolution of the size of the locally largest fragment (that is the cell obtained by following the largest fragment at each splitting).
\end{itemize}
  These are analytical results of independent interest.

Using this, we identify a remarkable one-parameter class of  cumulant functions $\{\kappa_{\theta} : \theta \in (1/2,3/2]\}$ which are closely related to stable L\'evy processes
as well as to random maps (see Sec.~\ref{sec:maps} for the latter connection).  More precisely, for every $ \theta \in ( \frac{1}{2}, \frac{3}{2}]$, we show the existence of a driving L\'evy process (see  \eqref{eq:PsiTheta} for the expression of its Laplace exponent) such that the associated cumulant function  $\kappa_{\theta}$  is\begin{equation}
\label{eq:kappatheta}\kappa_{\theta}(q) = \frac{\cos(\pi(q-\theta)) }{\sin(\pi(q-2\theta))} \cdot \frac{\Gamma (q-\theta)}{\Gamma (q-2\theta)}, \qquad \theta<q<2\theta+1.
\end{equation}
In the notation of Sec.~\ref{sec:many-to-one}, $\kappa_{\theta}$ satisfies Cram\'er's condition with \begin{equation}
\label{eq:cramertheta}\omega_{-}=\theta+1/2, \qquad  \omega_{+}=\theta+3/2
\end{equation} and we will see below that $\Phi^-(q) = \kappa(\omega_{-}+q)$  and $\Phi^+( q)=\kappa(\omega_{+}+q)$ are, up to  scaling constants, the Laplace exponents  of the L\'evy process appearing in the Lamperti representation of a strictly $\theta$-stable L\'evy process with positivity parameter $\rho$ conditioned to die continuously at $0$, resp.~to stay positive, and such that   \begin{eqnarray} \label{eq:positivity}\theta(1-\rho)=1/2. \end{eqnarray}
We refer to \cite{Cha96,CC08} for a definition of these processes.  In other words, the process  $Y^{+}$ and $Y^-$ defined in Sec.~\ref{sec:plus}, \ref{sec:area} and associated with the self-similar growth-fragmentation characterized by the pair $(\kappa_{\theta},-\theta)$  are distributed as the $\theta$-stable L\'evy processes with positivity parameter $\rho$ conditioned to stay positive, resp.~conditioned to die continuously at $0$.

In the particular case $\theta=3/2$, note that there are no positive jumps and $\kappa_{3/2}(q)= \frac{\Gamma(q-3/2)}{\Gamma(q-3)}$ so that, up to a scaling constant, the  growth-fragmentation characterized by the pair $(\kappa_{3/2},-1/2)$ is the one that has been considered in \cite{BCK} (see in particular Eq.~(32) in \cite{BCK}). Using the connection with random maps established in Sec.~\ref{sec:maps} we will be able to identify the law of  the intrinsic area of these growth-fragmentations with cumulant function $\kappa_{\theta}$, which turn out to be size-biased stable distributions (Corollary \ref{cor:area}).

\subsection{Characterizing the cumulant function of a growth-fragmentation
}

If $\kappa$ is a cumulant function of a growth-fragmentation and satisfies \eqref{omega+}, we have seen in Lemma \ref{Lphi+} that $q \mapsto \kappa(w+q)$ is the Laplace exponent of a (possibly killed) L\'evy process provided that $\kappa(w)\leq 0$ (this lemma is actually stated for $w=\omega_{+}$, but the argument is the same).  Conversely, if  $\Phi$ is the Laplace exponent of a (possibly killed) L\'evy process, it is natural to ask if there exists a growth-fragmentation with cumulant function  $\kappa$  and a value $w$ such that $\Phi(\cdot)=\kappa(w+\cdot)$, and in this case if $w$ is uniquely determined. It turns out that such a growth-fragmentation does not always exist, and we now provide a necessary and sufficient condition for this to hold.

\begin{theorem}\label{thm:kappaphi}Let $\Phi$ be a Laplace exponent of a (possibly killed) L\'evy process written in the form 
\begin{equation}
\label{eq:defPhi}\Phi(q) =  \Phi(0)+ \frac{1}{2}s^2q^2 + b' q +\int_{\R}\left( \e^{qy}-1+q(1-\e^y)\right) \Lambda(\d y)\,,\qquad q\geq 0,
\end{equation}
where $\sigma^2\geq 0$, $b\in\R$, $\Phi(0) \leq 0$  and $\Lambda$ is a measure on $\R$ such that 
$\int(1\wedge y^2)\Lambda(\d y)<\infty$ and\footnote{The condition $\int_{y>1}\e^{y}\Lambda(\d y)<\infty$ may be replaced by the weaker condition that there exists $q>0$ such that  $\int_{y>1}\e^{qy}\Lambda(\d y)<\infty$, and by considering an additional cutoff in \eqref{eq:defPhi} but we shall not enter such considerations.} $\int_{y>1}\e^{y}\Lambda(\d y)<\infty$.
The following two assertions are equivalent:
\begin{enumerate}
\item[(i)] 
There exists a unique quintuple $(\sigma^{2}, b , {\tt k} , w, \Lambda_{L})$ with  $\sigma^2\geq 0$, $b\in\R$, ${\tt k}\geq 0$, $w \geq 0$ and $\Lambda_{L}$ a measure on $(-\ln(2),\infty)$ with $\int_{(-\ln(2), \infty)}(1\wedge y^2)\Lambda_{L}(\d y)<\infty$
such that we have  $$\kappa(w+q)=\Phi(q), \qquad \forall  q \geq 0,$$ where we have put
\begin{equation}
\label{eq:defkappa}\kappa(q)=-{\tt k}+ \frac{1}{2}\sigma^2q^2 + bq +\int_{(- \ln(2),\infty)}\left( \e^{qy}-1+ (1-\e^{y})^q \mathbbm{1}_{y<0}+q(1-\e^{y}) \right) \Lambda_{L}(\d y).
\end{equation} 
\item[(ii)] Let $\nu$ be the image of the measure $\Lambda$ by the map $x \mapsto \e^{x}$. There exists a unique $w \geq 0$ such that the measure $x^{-w} \nu(\d x)$, restricted to $(0,1)$, is symmetric with respect to $1/2$. Set
\begin{equation}
\label{eq:f}f(q)= \Phi(q-w)- \int_{(0,\infty)}\left( \e^{qy}-1+q(1-\e^{y})\right) \e^{-w y} \Lambda(\d y).\end{equation}
Then $f(q)$ is well defined and finite for every $q \in [1,w+1]$, the function $f$ admits an analytic continuation to $[1,+\infty)$, and we have \begin{equation}
\label{eq:ineqPhi}15 f(4) \geq 6f(5)+10 f(3).
\end{equation}
\end{enumerate}
In addition, when these assertions are satisfied, $s^{2}=\sigma^{2}$, $\Lambda_{L}(\d y)=\e^{-w y} \Lambda(\d y)$ on $(-\ln(2),\infty)$ and 
$${\tt k}=15 f(4) - 6f(5)-10 f(3).$$ 
In particular, ${\tt k}=0$ if and only if the inequality in $ \eqref{eq:ineqPhi}$ is an equality.
\end{theorem}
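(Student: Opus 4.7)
The plan is to match the L\'evy--Khintchin representation of $\Phi$ with that of $\kappa(w+\cdot)$ obtained by expanding \eqref{eq:defkappa}. The key geometric fact is that the involution $y\mapsto\ln(1-e^y)$ on $(-\infty,0)$ has the unique fixed point $-\ln 2$ and swaps $(-\infty,-\ln 2)$ with $(-\ln 2,0)$; in the multiplicative coordinate $u=e^y$ it becomes $u\mapsto 1-u$ on $(0,1)$, which is exactly the symmetry appearing in (ii). Matching coefficients will give the relation $\Lambda(\d y)=e^{wy}(\Lambda_L+\tilde\Lambda_L)(\d y)$, where $\tilde\Lambda_L$ is the image of $\mathbf{1}_{y<0}\Lambda_L$ under that involution, exactly as in Lemma~\ref{Lphi+}(i); and the killing rate ${\tt k}$ will be isolated by a judiciously chosen linear functional.

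For the direction (i) $\Rightarrow$ (ii), I would expand $\kappa(w+q)-\kappa(w)$ by plugging into \eqref{eq:defkappa}, split the integral over $(-\ln 2,0)$ and $(0,\infty)$, and change variable $u=\ln(1-e^y)$ in the $(1-e^y)^w((1-e^y)^q-1)$ piece to transfer it from $(-\ln 2,0)$ to $(-\infty,-\ln 2)$. After adding the compensator needed to produce the integrand $e^{qy}-1+q(1-e^y)$, this yields
$$\Phi(q)-\Phi(0)=\tfrac12 \sigma^2 q^2+b'q+\int_\R\!\big(e^{qy}-1+q(1-e^y)\big)\Lambda(\d y),\quad \Lambda(\d y)=e^{wy}\big(\Lambda_L+\tilde\Lambda_L\big)(\d y),$$
for a uniquely determined $b'$. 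Passing to $u=e^y$, the measure $u^{-w}\nu$ on $(1/2,1)$ is the image of $\Lambda_L|_{(-\ln 2,0)}$, and on $(0,1/2)$ it is the image of the same measure under $u\mapsto 1-u$, so $u^{-w}\nu|_{(0,1)}$ is symmetric about $1/2$. Uniqueness of $w$ follows because two admissible exponents $w_1\neq w_2$ would force the non-constant function $u^{w_1-w_2}$ to be symmetric about $1/2$, a contradiction.

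For the formula ${\tt k}=15f(4)-6f(5)-10f(3)$, using $\Phi(q-w)=\kappa(q)$ together with the identification $\Lambda_L=e^{-wy}\Lambda$ on $(-\ln 2,\infty)$, the subtraction in \eqref{eq:f} cancels exactly the $(0,\infty)$-part of the integral in $\kappa(q)$, leaving
$$f(q) = -{\tt k}+\tfrac12\sigma^2 q^2+b q+\int_{(-\ln 2, 0)}\!\big(e^{qy}-1+(1-e^y)^q+q(1-e^y)\big)e^{-wy}\Lambda(\d y).$$
The functional $F[g]\coloneqq 15g(4)-6g(5)-10g(3)$ satisfies $F[1]=-1$, $F[q]=F[q^2]=0$, and the polynomial identity
$$\big(6z^5-15z^4+10z^3\big)+\big(6(1-z)^5-15(1-z)^4+10(1-z)^3\big)=1,\quad z\in(0,1),$$
(verified by expanding $(1-z)^3(6(1-z)^2-15(1-z)+10)$) ensures that $F$ annihilates the $y$-integrand pointwise. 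Consequently $F[f]={\tt k}\geq 0$, which is exactly \eqref{eq:ineqPhi}, with equality iff ${\tt k}=0$.

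The converse (ii) $\Rightarrow$ (i) then proceeds by defining $\sigma^2\coloneqq s^2$, $\Lambda_L(\d y)\coloneqq e^{-wy}\Lambda(\d y)$ on $(-\ln 2,\infty)$, and ${\tt k}\coloneqq 15f(4)-6f(5)-10f(3)\geq 0$ by hypothesis. The symmetry condition guarantees that the image $\tilde\Lambda_L$ of $\mathbf{1}_{(-\ln 2,0)}\Lambda_L$ under the involution coincides with $e^{-wy}\Lambda$ on $(-\infty,-\ln 2)$, so the relation $\Lambda=e^{wy}(\Lambda_L+\tilde\Lambda_L)$ holds on all of $\R$. Reading $b$ off the coefficient of $q$ in the shifted representation, the arguments of the first paragraph then run in reverse and give $\kappa(w+q)=\Phi(q)$; uniqueness of the quintuple is immediate since each of its parameters is an explicit functional of $(\Phi,w)$. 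The main obstacle I foresee is the careful L\'evy-measure bookkeeping across the fixed point $-\ln 2$ (together with checking that the integrability assumption $\int_{y>1}e^y\Lambda(\d y)<\infty$ suffices for the analytic continuation of $f$ to $[1,\infty)$), and pinning down the combinatorial identity that isolates ${\tt k}$: the choice of evaluation points $3,4,5$ and coefficients $(-10,15,-6)$ is essentially forced once one requires the annihilating functional to kill $1$ with unit residue, $q$, $q^2$, and the full $y$-parametric family of integrands on $(-\ln 2,0)$.
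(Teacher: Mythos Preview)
Your proposal is correct and follows essentially the same route as the paper: identify the L\'evy measure of $\Phi$ with $e^{wy}(\Lambda_L+\tilde\Lambda_L)$ via the involution $y\mapsto\ln(1-e^y)$ (the paper phrases this in the multiplicative variable as the symmetrized measure $\overline\nu_L$), deduce the symmetry of $x^{-w}\nu$ about $1/2$, then isolate ${\tt k}$ by a linear combination of values of $f$ that annihilates the quadratic, linear, and integral parts. The only stylistic difference is in this last step: the paper uses two separate polynomial identities, $2P_3-3P_2\equiv0$ and $5P_4-2P_5-5P_2\equiv0$ with $P_q(x)=x^q+(1-x)^q-1+q(1-x)$, obtaining $2f(3)-3f(2)={\tt k}+3\sigma^2$ and $5f(4)-2f(5)-5f(2)=2{\tt k}+5\sigma^2$, and then eliminates $\sigma^2$; you instead apply the single functional $F=15\,\mathrm{ev}_4-6\,\mathrm{ev}_5-10\,\mathrm{ev}_3$ directly, checking $F[1]=-1$, $F[q]=F[q^2]=0$, and the neat identity $6z^5-15z^4+10z^3+6(1-z)^5-15(1-z)^4+10(1-z)^3\equiv 1$, which is exactly the paper's elimination carried out in advance.
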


Given a growth-fragmentation with cumulant function $\kappa$, Theorem \ref{thm:kappaphi} (and more precisely \eqref{eq:defkappa})  shows that  the self-similar Markov process associated with the  (possibly killed) L\'evy process with Laplace exponent 
\begin{equation}
\label{eq:LL}q \quad  \longmapsto \quad -{\tt k}+ \frac{1}{2}\sigma^2q^2 + bq +\int_{(-\ln(2),\infty)}\left(\e^{qy}-1+q(1-\e^{y}) \right) \Lambda_{L}(\d y)
\end{equation}
may be used as the cell process to construct the growth-fragmentation. Roughly speaking, this self-similar Markov process  describes the evolution of the size of the locally largest fragment (that is the cell obtained by following the largest fragment at each splitting, indeed  $X(t) \geq X(t-)/2$ for every $t>0$ for this self-similar Markov process $X$) in the growth-fragmentation.  In particular, Theorem \ref{thm:kappaphi} gives a means to analytically identify this process.

Theorem \ref{thm:kappaphi} also implies that the laws of the processes $Y^{+}$ and $Y^{-}$ introduced in Sec.~\ref{sec:plus} and \ref{sec:area}  characterize the law of a self-similar growth-fragmentation.

\begin{proof}
We first show that $(i)$ implies $(ii)$. Let $(\sigma^{2}, b , {\tt k} , w, \Lambda_{L})$ be a quintuple such that $(i)$ holds, and let $\nu_{L}$ be the image of the measure $\Lambda_{L}$ by the map $x \mapsto\e^{x}$. Introduce  the symmetrized measure $\overline{\nu}_{L}$ of $\nu_{L}$ on $(0,1)$ defined by
$$\int_{(0,1)} g(x) \overline{\nu}_{L}(\d x) = \int_{(1/2,1)} (g(x)+g(1-x)) \nu_{L}(\d x)$$
for every nonnegative measurable function $g$. By using the definition of $\kappa$, a straightforward computation yields the  equality
$$\kappa(w+q)=\kappa(w)+ \frac{1}{2} \sigma^{2} q^{2} + b_{1} q+\int_{(0,1)}\left( x^{q}-1+q(1-x)\right) x^{\omega} \overline{\nu}_{L}(\d x)+\int_{(1,\infty)}\left( x^{q}-1+q(1-x)\right) x^{\omega} {\nu}_{L}(\d x)$$
for every $q  \geq 0$,
with a certain $b_{1} \in \R$.
Since by hypothesis $\kappa(w+q)=\Phi(q)$ for every $q \geq 0$,  the L\'evy-Khintchin formula implies that $\kappa(w)=\Phi(0)$, $x^{\omega}  \overline{\nu}_{L}(\d x)=\nu(\d x)$ on $(0,1)$ and that $x^{\omega}  {\nu}_{L}(\d x)=\nu(\d x)$ on $(1,\infty)$. In particular, on $(0,1)$, $x^{-\omega} \nu(\d x)$ is symmetric with respect to $1/2$. 

We now check that
$$15 f(4) - 6f(5)-10 f(3)={\tt k}$$
and \eqref{eq:ineqPhi} will follow. To this end,  first notice $f(q)< \infty$ for every $q \in [1,w+1]$ since $\int_{y>1}\e^{y}\Lambda(\d y)<\infty$. In addition, we have
\begin{eqnarray*}
f(q)&=& \kappa(q)-  \int_{(0,\infty)}\left(\e^{qy}-1+q(1-\e^{y})\right)  \Lambda_{L}(\d y)\\
&=& -{\tt k}+ \frac{1}{2} \sigma^{2} q^{2} + b q+\int_{\R_{-}}\left(\e^{qy}+(1-\e^{y})^{q}-1+q(1-\e^{y})\right) {\Lambda_{L}}(\d y).
\end{eqnarray*}
so that $f$ admits indeed an analytic continuation on $[1,\infty)$. Set $P_{q}(x)=x^{q}+(1-x)^{q}-1+q(1-x)$ for every $q, x \in \mathbb{R}$. Since for every $x \in \mathbb{R}$ we have $2P_{3}(x)-3P_{2}(x)=0$ and $5 P_{4}(x)-2P_{5}(x)-5P_{2}(x)=0$, it follows  that
$$2 f(3)-3f(2)={\tt k}+3 \sigma^{2} \qquad \textrm{and} \qquad 5 f(4)-2 f(5)-5f(2)=2{\tt k}+5\sigma^{2}.$$
As a consequence, $15 f(4) - 6f(5)-10 f(3)={\tt k}$, and the proof of the first implication is complete.

Finally, if $(ii)$ is satisfied, the previous calculations show that $\kappa(q) \coloneqq \Phi(q-w)$ can be written in the form \eqref{eq:defkappa} with  $\sigma^{2}=s^{2}$, $\Lambda_{L}(\d y)=e^{-w y} \Lambda(\d y)$ on $(-\ln(2),\infty)$, 
${\tt k}=15 f(4) - 6f(5)-10 f(3)$ and a certain value of $b \in \R$.\end{proof}

\subsection{A one-parameter family of cumulant functions}
\label{sec:oneparameter}
Recall that if $\kappa$ is defined by \eqref{eqkappa} and satisfies \eqref{omega+}, we have seen in Lemma \ref{Lphi+} that $\Phi^{+}$ is the Laplace exponent of a L\'evy process drifting to $+\infty$. As an application of Theorem \ref{thm:kappaphi}, we exhibit a distinguished family of cumulant functions for which the Laplace exponent $\Phi^+$ (and also $\Phi^-$) takes a particularly simple form. This sheds some new light on a calculation performed by Miller \& Sheffield \cite[Sec.~4]{MS15} (see the Remark \ref{rem:MS} at the end of this section). Let us mention that the proof of Proposition \ref{prop:hyper} is technical since it uses hypergeometric L\'evy processes \cite{KP13} (and hence hypergeometric functions) and may be skipped in first reading.

\begin{proposition}\label{prop:hyper}Assume that $\Phi^{+}$ is the Laplace exponent of a hypergeometric L\'evy process without killing and drifting to $+\infty$, that is
\begin{equation}
\label{eq:phiplus}\Phi^{+}(z)= - \frac{\Gamma(\gamma-z)}{\Gamma(-z)} \cdot \frac{\Gamma( \widehat{\beta}+\widehat{\gamma}+z)}{\Gamma( \widehat{\beta}+z)}.
\end{equation}
with $\widehat{\beta} > 0$, $\gamma, \gc \in (0,1)$. Then the assertions of Theorem \ref{thm:kappaphi} are satisfied if and only if $\bc=1$ and $\gc \in (0,1/2]$. Then $\Phi^{+}$ is the Laplace exponent of the L\'evy process appearing in the Lamperti representation of a strictly $\theta$-stable L\'evy process with positivity parameter $\rho$ conditioned to stay positive with $\theta= \gamma+\gc \in (0,3/2]$ and $\rho=\gamma/\theta$.

In addition, the associated growth-fragmentation has no killing (that is ${\tt k}=0$ in \eqref{eq:defkappa}) if and only if $\theta(1-\rho)=1/2$ (in particular, $1/2 < \theta \leq 3/2$), and then its cumulant function is
$$\kappa_{\theta}(q)= \frac{\cos(\pi(q-\theta)) }{\sin(\pi(q-2\theta))} \cdot \frac{\Gamma (q-\theta)}{\Gamma (q-2\theta)}, \qquad \theta<q<2\theta+1.$$
\end{proposition}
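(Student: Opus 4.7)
The strategy is to apply Theorem \ref{thm:kappaphi} with $\Phi = \Phi^{+}$ and test condition (ii) to decide when a valid cumulant function $\kappa$ exists. First I would write down the explicit Lévy density of $\Phi^{+}$ using the Kuznetsov--Pardo formula for hypergeometric Lévy processes: with the parametrisation given in \eqref{eq:phiplus} (corresponding to $\beta=1$ in their notation), the density on $\mathbb{R}_{-}$ takes the form $\pi(x) = C\cdot e^{(\widehat{\beta}+\widehat{\gamma})x}(1-e^{x})^{-\widehat{\gamma}-1}\,F(e^{x})$, where $F(u) = {}_{2}F_{1}(1+\widehat{\gamma},\eta;\widehat{\beta}+\widehat{\gamma};u)$ and $\eta = \gamma+\widehat{\beta}+\widehat{\gamma}$. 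Pushing forward by $x\mapsto u=e^{x}$ gives the density of $\nu$ on $(0,1)$.

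Next I would impose the symmetry requirement of Theorem \ref{thm:kappaphi}(ii), namely that $u^{-w}\nu(\d u)$ be invariant under $u\mapsto 1-u$. Matching the powers of $u$ and $1-u$ in the resulting identity reduces the problem to the functional equation
$$F(u)\bigl(1-u\bigr)^{-a} = F(1-u)\,u^{-a}$$
for some exponent $a$ depending on $w,\widehat{\beta},\widehat{\gamma}$. Because the connection formula for ${}_{2}F_{1}$ expresses $F(1-u)$ as a combination of a power of $u$ and a power of $u$ times a non-trivial ${}_{2}F_{1}$ series in $u$, this identity cannot hold on an interval unless $F$ collapses to a pure power of $(1-u)$. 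The classical reduction ${}_{2}F_{1}(a,b;a;z) = (1-z)^{-b}$ forces $\widehat{\beta}+\widehat{\gamma} = 1+\widehat{\gamma}$, i.e.\ $\widehat{\beta}=1$. Conversely, if $\widehat{\beta}=1$, then $F(u)=(1-u)^{-\eta}$ with $\eta=1+\gamma+\widehat{\gamma}$, and matching exponents in the symmetry identity determines $w$ uniquely.

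Having established $\widehat{\beta}=1$, I would identify $\Phi^{+}$ with the Laplace exponent of the Lamperti--Lévy process associated with a strictly $\theta$-stable Lévy process with positivity parameter $\rho$ conditioned to stay positive, via the Caballero--Chaumont representation, setting $\theta=\gamma+\widehat{\gamma}$ and $\rho=\gamma/\theta$. The constraints $\gamma,\widehat{\gamma}\in(0,1)$ translate to $\theta\in(0,3/2]$. Analogously, one would verify that $\Phi^{-}(q)=\kappa(\omega_{-}+q)$ is the Laplace exponent of the Lamperti Lévy process of the same stable process conditioned to be absorbed continuously at $0$, giving the claim about $Y^{-}$.

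Finally, the killing rate is extracted from the explicit formula ${\tt k} = 15f(4)-6f(5)-10f(3)$ in Theorem \ref{thm:kappaphi}. A direct calculation, using \eqref{eq:phiplus} with $\widehat{\beta}=1$ and computing the integral correction in \eqref{eq:f} via the Beta function, reduces ${\tt k}$ to a closed-form expression in $\gamma,\widehat{\gamma}$ whose vanishing is equivalent to $\widehat{\gamma}=1/2$, i.e.\ $\theta(1-\rho)=1/2$; the inequality \eqref{eq:ineqPhi} then restricts us to $\widehat{\gamma}\in(0,1/2]$. Under ${\tt k}=0$, I would compute $\kappa(q)=\Phi^{+}(q-w)$ explicitly; the gamma reflection formula $\Gamma(z)\Gamma(1-z)=\pi/\sin(\pi z)$ and the identity $\cos(\pi z)=\sin(\pi(1/2-z))$ collapse the resulting product of four gamma functions into the compact form $\kappa_{\theta}(q)$ announced in the statement. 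The main obstacle is the necessity half of Step 2: rigorously ruling out $\widehat{\beta}\neq 1$ requires a careful analysis of the monodromy of ${}_{2}F_{1}$ at $u=0$ and $u=1$ to show that no non-degenerate hypergeometric factor can satisfy the required exchange symmetry.
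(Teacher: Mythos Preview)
Your overall strategy matches the paper's: write down the Kuznetsov--Pardo density of $\nu$ on $(0,1)$, impose the symmetry condition from Theorem~\ref{thm:kappaphi}(ii), deduce $\widehat{\beta}=1$, identify the resulting $\Phi^{+}$ with the Lamperti exponent of a stable process conditioned to stay positive, and then compute $\mathtt{k}$ via the formula for $f$. Two points where your execution diverges from the paper are worth noting.

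\medskip
\textbf{Necessity of $\widehat{\beta}=1$.} You flag this as the main obstacle and propose to attack it via monodromy of ${}_{2}F_{1}$. The paper avoids this entirely. Writing the symmetry condition as
\[
\frac{{}_{2}F_{1}(1+\widehat{\gamma},\eta;\eta-\gamma;z)}{{}_{2}F_{1}(1+\widehat{\gamma},\eta;\eta-\gamma;1-z)}
=\Bigl(\frac{z}{1-z}\Bigr)^{w-\widehat{\beta}-\widehat{\gamma}+1},
\]
it applies the standard connection formula expressing ${}_{2}F_{1}(a,b;c;z)$ in terms of two ${}_{2}F_{1}$ series in $1-z$ and then simply lets $z\to 0$. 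Comparing the leading behaviour on both sides forces $1+\widehat{\gamma}=\eta-\gamma$, i.e.\ $\widehat{\beta}=1$, and simultaneously pins down $w=\theta+\widehat{\gamma}+1$. No monodromy analysis is needed.

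\medskip
\textbf{Computing $\mathtt{k}$.} You plan to evaluate $15f(4)-6f(5)-10f(3)$ and to handle the integral correction in \eqref{eq:f} ``via the Beta function''. The paper does two things differently. First, since the hypergeometric process has no Gaussian part ($\sigma^{2}=0$), the proof of Theorem~\ref{thm:kappaphi} gives the simpler identity $\mathtt{k}=2f(3)-3f(2)$, which is what the paper actually uses. Second, the integral over $(1,\infty)$ in \eqref{eq:f} is not a mere Beta integral: after the substitution $z=e^{y}$, the measure $z^{-w}\nu(\d z)$ on $(1,\infty)$ has density proportional to $z^{-2\theta-2}(1-z^{-1})^{-\theta-1}$, which the paper recognises as the L\'evy density of a member of Kuznetsov's $\beta$-family and evaluates via \cite[Proposition~9]{Kuz10}. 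This yields a closed form for $f(q)$ involving $\Gamma(2\theta+1-q)/\Gamma(\theta+1-q)$, from which $\mathtt{k}=\cos(\widehat{\gamma}\pi)\cdot 2\Gamma(2\theta)/\Gamma(\theta)$ follows directly; hence $\mathtt{k}\ge 0$ iff $\widehat{\gamma}\in(0,1/2]$ and $\mathtt{k}=0$ iff $\widehat{\gamma}=1/2$. Your Beta-function sketch would need to reproduce this computation, and citing \cite{Kuz10} is the cleanest way to do so.

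\medskip
Finally, the aside about $\Phi^{-}$ and $Y^{-}$ is not part of the proposition as stated; the paper treats that separately.
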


\begin{proof} Let $\nu$ be the image  by the map $x \mapsto\e^{x}$ of the L\'evy measure of the L\'evy process with Laplace exponent given by \eqref{eq:phiplus}. By \cite[Proposition 1]{KP13}, setting $\eta=\widehat{\beta}+\gamma+\widehat{\gamma}$, the density of $\nu$ on $(0,1)$ is $$ \nu(z)=- \frac{\Gamma(\eta)}{\Gamma(\eta-\gamma) \Gamma(-\widehat{\gamma})} z^{\bc+\gc-1} {}_{2}F_{1}(1+\gc,\eta;\eta-\gamma;z), \quad \textrm{with} \quad  {}_{2}F_{1}(a,b;c;z)= \sum_{n \geq 0} \frac{\Gamma{(a+n)} \Gamma(b+n) \Gamma(c)}{\Gamma(a) \Gamma(b) \Gamma(n+c)} z^{n}.$$
Let $w \geq 0$ be such that $z^{-w} \nu(z)$ is symmetric with respect to $1/2$ on $(0,1)$. Then
$$ \frac{{}_{2}F_{1}(1+\gc,\eta;\eta-\gamma;z)}{{}_{2}F_{1}(1+\gc,\eta;\eta-\gamma;1-z)} =  \left(  \frac{z}{1-z} \right) ^{w-\bc-\gc+1}.$$
Using the formula (see e.g. \cite[p.~291]{WW96})
\begin{eqnarray*}
{}_{2}F_{1}(a,b;c;z) &=&  \frac{\Gamma(c) \Gamma(c-a-b)}{\Gamma(c-a) \Gamma(c-b)} {}_{2}F_{1}(a,b;a+b+1-c;1-z) \\
&& \qquad \qquad + \frac{\Gamma(c)\Gamma(a+b-c)}{\Gamma(a)\Gamma(b)} (1-z)^{c-a-b} {}_{2}F_{1}(c-a,c-b;1+c-a-b;1-z)
\end{eqnarray*}
and taking $z \rightarrow 0$,
it is a simple matter to check that this implies that $1+\gc=\eta-\gamma$, so that $ \bc=1$. Setting $\theta=\gamma+\gc$, this also yields $w=\theta+\gc+1$ and ${}_{2}F_{1}(1+\gc,\eta;\eta-\gamma;z)=(1-z)^{-\theta-1}$.   As a consequence, by \cite[Proposition 1]{KP13},  the density of $\nu$ is
$$\nu(z)=  \frac{\Gamma(\theta+1)\sin(\pi \widehat{\gamma})}{\pi} \frac{z^{\widehat{\gamma}}}{(1-z)^{\theta+1}} \mathbbm{1}_{0<z<1}+ \frac{\Gamma(\theta+1)\sin(\pi \gamma)}{\pi} \frac{z^{\theta-\gamma}}{(z-1)^{\theta+1}} \mathbbm{1}_{z>1},$$
so that by \cite[Theorem 1]{KP13},  $\Phi^{+}$ is indeed the Laplace exponent of the L\'evy process appearing in the Lamperti representation of a strictly $\theta$-stable L\'evy process with positivity parameter $\rho=\gamma/\theta$ conditioned to stay positive.

It remains to see if \eqref{eq:ineqPhi} holds, and to this end we calculate  $\int_{(1,\infty)}\left( x^{q}-1+q(1-x) \right) x^{-w} \nu(\d x)$ up to a linear term in $q$. On $(1,\infty)$, we have
$$ z^{-w} \nu(\d z)= \frac{\Gamma(\theta+1)\sin(\pi \gamma)}{\pi} \frac{1}{(z(z-1))^{\theta+1}}= \frac{\Gamma(\theta+1)\sin(\pi \gamma)}{\pi} \frac{z^{-2\theta-2}}{(1-z^{-1})^{\theta+1}}.$$
Since a L\'evy process with L\'evy measure $ z^{-w} \nu(\d z)$ on $(1,\infty)$ belongs to the so-called class of $\beta$-family of L\'evy processes introduced by Kuznetsov \cite{Kuz10}, by 
\cite[Proposition 9]{Kuz10} (with, in the notation of the latter reference, $\beta_{1}=1$, $\lambda_{1}=\theta+1$, $\alpha_{1}=2\theta+1$) we get the existence of $b \in \R$ such that
$$\int_{(1,\infty)}\left( x^{q}-1+q(1-x) \right) x^{-w} \nu(\d x)= - \frac{\sin(\gamma \pi)}{\sin (\theta \pi)} \left(  \frac{\Gamma(2\theta+1-q)}{\Gamma(\theta+1-q)}- \frac{\Gamma(2\theta+1)}{\theta+1} \right)+bq, \quad  0 \leq q <2\theta+1.$$
As a consequence,
$$f(q)= \frac{\sin(\pi(\theta+ \gc-q) )}{\pi} \Gamma(2\theta+1-q) \Gamma(q-\theta)+ \frac{\sin(\gamma \pi)}{\sin (\theta \pi)} \left(  \frac{\Gamma(2\theta+1-q)}{\Gamma(\theta+1-q)}- \frac{\Gamma(2\theta+1)}{\theta+1} \right)-bq, \quad  0 \leq q <2\theta+1.$$
Since $\sigma^{2}=0$, the proof of Theorem \ref{thm:kappaphi} shows that ${\tt k}=2 f(3)-3f(2)$. A straightforward computation then gives
$${\tt k} = \cos(\gc \pi) \cdot \frac{2\Gamma(2\theta)}{\Gamma(\theta)}.$$
Therefore ${\tt k} \geq 0$ if and only if $\gc \in (0,1/2]$, which implies that $\theta \in (0,3/2]$. In addition, ${\tt k}=0$ if and only if $ \theta(1-\rho)=\gc=1/2$. 
\end{proof}

In the particular case of $\kappa_{\theta}$ (with $ \theta \in (1/2,3/2]$), the proof of  Proposition \ref{prop:hyper} actually gives the explicit Laplace exponent \eqref{eq:LL} of the L\'evy process involved in the self-similar Markov process describing the evolution of the locally largest fragment in the associated growth-fragmentation.  Let $\nu_{\theta}$ be the measure on $(1/2,\infty)$ defined by 
$$  \nu_{\theta}(\d x)=\frac{\Gamma(\theta+1)}{\pi} \left(  \frac{1}{(x(1-x))^{\theta+1}} \mathbbm{1}_{1/2<x<1}+ \sin(\pi (\theta-1/2)) \cdot  \frac{1}{(x(x-1))^{\theta+1}} \mathbbm{1}_{x>1} \right) \d x$$
and let $\Lambda_{\theta}$ be the image of $\nu_{\theta}$ by the map $x \mapsto \ln x$. Set
\begin{equation}
\label{eq:PsiTheta}\Psi_{\theta}(q)= \left( \frac{ \Gamma (2-\theta)}{2 \Gamma (2-2 \theta) \sin (\pi \theta)}+\frac{\Gamma (\theta+1) B_{\frac{1}{2}}(-\theta,2-\theta)}{\pi } \right) q +\int_{\R}\left(\e^{qy}-1+q(1-\e^{y}) \right) \Lambda_{\theta}(\d y)
\end{equation}
where  $B_{1/2}(a,b)= \int_{0}^{1/2} t^{a-1}(1-t)^{b-1} \d t$ is the incomplete Beta function.
Then
$$\kappa_{\theta}(q)= \Psi_{\theta}(q)+\int_{(-\infty,0)}(1-\e^{y})^q \Lambda_{\theta}(\d y), \qquad \theta<q<2\theta+1.$$

Indeed, the proof of Proposition \ref{prop:hyper}  shows the
 existence of $b \in \R$ such that
$$ \frac{\cos(\pi(q-\theta)) }{\sin(\pi(q-2\theta))} \cdot \frac{\Gamma (q-\theta)}{\Gamma (q-2\theta)}=  bq+\int_{1/2}^{\infty} \left( x^{q}-1+(1-x)^{q} \mathbbm{1}_{x<1}+q(1-x) \right)  \nu_{\theta}(\d x), \qquad \theta<q<2\theta+1.$$
 To find the value of $b$, we simply specify the latter equality for $q=2$:
$$-\frac{ \Gamma (2-\theta)}{2 \Gamma (2-2 \theta) \sin (\pi  \theta)}= 2b+2 \int_{1/2}^{1} \left( 1-x \right)^{2}  \nu_{\theta}(\d x)+\int_{1}^{\infty} \left(1-x \right)^{2}  \nu_{\theta}(\d x).$$
Since
$$ \int_{1/2}^{1}  \frac{1}{x^{\theta+1} (1-x)^{\theta-1}} \d x=-\frac{\Gamma (1-\theta) \Gamma (2-\theta)}{\theta \Gamma (2-2 \theta)}-B_{\frac{1}{2}}(-\theta,2-\theta)$$
and $$ \int_{1}^{\infty}\frac{1}{x^{\theta+1} (x-1)^{\theta-1}} \d x =    \frac{\Gamma (2-\theta) \Gamma (2 \theta-1)}{\Gamma (\theta+1)},$$
we obtain the announced value of $b$.

 In particular, by taking $\theta=3/2$, since $B_{\frac{1}{2}}(-3/2,1/2)=-8/3$, we get that
$$ \frac{\Gamma(q-3/2)}{\Gamma(q-3)} = - \frac{2}{\sqrt{\pi}} q + \frac{3}{4\sqrt{\pi}} \int_{1/2}^{1}(x^{q}-1+q(1-x)+(1-x)^{q}) \cdot \frac{1}{(x(1-x))^{5/2}}\d x \quad  (q >3/2),$$
which gives a  proof of the identity \cite[Eq.~(32)]{BCK}.

\begin{remark}[Interpretation of a calculation by Miller \& Sheffield]\label{rem:MS} Let us draw a connection between this section and a calculation performed by Miller \& Sheffield \cite[Sec.~4]{MS15}. We first  translate (non-rigorously) their setup in our framework of growth-fragmentations\footnote{More precisely, keeping the notation of \cite[Sec.~4]{MS15}, Theorem 4.6 in \cite{MS15} indicates that under $\widetilde{\mu}^{1,L}_{\textrm{DISK}}$, the process describing the boundary lengths of increasing balls from the root is a self-similar growth-fragmentation under the tilted probability measure $\mathbb{P}^{-}_{L}$. The process $(L_{r})$ is $Y^{-}$ (the size of the tagged fragment under $ \widehat{\mathcal{{P}}}^{-}_{L}$), the process $(M^{1}_{r})$ is the size of the Eve cell when one uses the locally largest cell process for the Eve cell under the tilted probability measure $ \widehat{\mathcal{{P}}}^{-}_{L}$ and the process $(M_{r})$ is the size of the Eve cell when one uses the locally largest cell process for the Eve cell under the non-tilted probability measure $ \widehat{\mathcal{{P}}}_{L}$. Theorem 4.6 in \cite{MS15} indicates that the process $(L_{r})$ evolves as a time-reversed $\theta$-stable continuous state branching process.}: Consider a growth-fragmentation with pair $(\kappa,\alpha)$ such that Cram\'er's hypothesis holds. Assume that the process $Y^{-}$ (in the notation of Sec.~\ref{sec:area}) is the time-reversal of a $\theta$-stable branching process. We claim that necessarily we have $\theta=3/2$. Indeed, if $Y^-$ is a time-reversal of the $\theta$-stable branching process, by standard reversal arguments $Y^-$ is a Lamperti time-change of the $\theta$-stable process with no positive jumps conditioned to die continuously at $0$. Therefore by Theorem \ref{thm:kappaphi} we must have $\kappa=\kappa_{\theta}$ and the only $\theta$ for which the process has no positive jumps is $\theta= 3/2$.
\end{remark}

{\begin{remark} The boundary case $\theta=1/2$ and $ \rho=0$ corresponds to $\kappa_{1/2}(q)= -\frac{\Gamma(q-1/2)}{\Gamma(q-1)}$, which is (up to a constant factor) the cumulant function of the self-similar pure fragmentation that occurs when splitting the Brownian Continuum Random Tree at heights; see \cite{BeSSF} and \cite{UB}.
Roughly speaking, in this situation, we have $\omega_-=1$ whereas $\omega_+$ does not exist, as $\kappa_{1/2}$ is a non-increasing function on $[1,\infty)$. Further,  the function $\Phi_{1/2}^-(q)= \kappa_{1/2}(q+1)$ can be identified
as the Laplace exponent of the L\'evy process appearing in the Lamperti representation of the negative of a $1/2$-stable subordinator killed when it becomes negative, and conditioned to hit $0$; see \cite{UB} for details. Note further that in this case, it would make of course no sense to condition this process to stay positive. 
\end{remark}

\section{Applications to large random planar maps} 
\label{sec:maps}

 In this section, we show that  growth-fragmentations with cumulant function \eqref{eq:kappatheta}  appear in the Markovian explorations of  particular models of random planar maps which are, roughly speaking, the dual maps of the so-called stable maps of Le Gall \& Miermont \cite{LGM09}. In particular,  for $ \theta \in (1,3/2]$, we shall show that when taken with the proper self-similarity index, they describe the scaling limit of the perimeters of cycles obtained by slicing these random maps at all heights. For $\theta=3/2$, this was observed  in \cite{BCK} for random Boltzmann triangulations.  An an application, this link will allow us to identify the law of the intrinsic area of  these growth-fragmentations (as defined in Sec.~\ref{sec:area}). 

\subsection{Critical non-generic Boltzmann planar maps}
\label{sec:boltzmann}

We first present the model of random planar maps we are dealing with. As usual, all planar maps in this work are rooted, i.e.~come with a distinguished oriented edge; for technical simplicity we will  only consider \emph{bipartite} planar maps, that is all faces have even degree. If $ \map$ is a (rooted bipartite) planar map we denote by $\mathsf{Faces}( \map)$ the set of its faces, and by
$\rootface \in \mathsf{Faces}( \map)$ the face adjacent to the right of the root edge. This face is called the root face of the map, and the origin vertex of the root edge is called the origin of the map. The integer $ \mathrm{deg}( \rootface)$ is  the perimeter of $\map$; note that the perimeter of a bipartite map must be even due to the parity constraint. We write $| \map|$ for the total number of vertices of $ \map$. For $\ell \geq 0$ and $n \geq 0$, we denote by $ \mathsf{Map}^{(\ell)}_{n}$ the set of all (rooted bipartite) planar maps  of perimeter $2 \ell$ with $n$ vertices. By convention, $ \mathsf{Map}^{(0)}_{1}$ contains a single ``vertex map''. We finally set $ \mathsf{Map}^{(\ell)}= \cup_{n \geq 0} \mathsf{Map}^{(\ell)}_{n}$. Any planar map with at least one edge can be seen as a planar map with perimeter 2 by simply splitting the root edge into a root face of degree $2$. We shall implicitly make this identification many times in this section.

 Given a non-zero sequence $ \mathbf{q}= (q_{k})_{ k \geq 1}$ of non-negative real numbers, we define a measure $ {\tt w}$ on the set of all (finite) bipartite planar maps by the formula
$${\tt w}(\map) \coloneqq \prod_{f\in\faces(\map)\backslash \{ \rootface\}} q_{\deg(f)/2}, \qquad   \map \in \bigcup_{\ell \geq 0} \mathsf{Map}^{(\ell)}.
$$
We then set
 \begin{eqnarray} W^{(\ell)}_{n} = {\tt w}\big(\mathsf{Map}^{(\ell)}_{n}\big), \qquad W^{(\ell)} = \sum_{n \geq 0}  W_{n}^{(\ell)}   \quad \mbox{ and }  \quad W^{(\ell)}_{\bullet} = \sum_{n \geq 0} n\,W^{(\ell)}_{n},   \label{eq:defWl}\end{eqnarray} where the dependence in $ \mathbf{q}$ is implicit. 
 We  assume that $ \mathbf{q}$ is admissible, meaning that  $W^{(\ell)}_{\bullet}< \infty$ for one value of $\ell \geq 1$ (or, equivalently,  that  $W^{(\ell)}_{\bullet} < \infty$ for every $\ell \geq 1$, see e.g.~\cite{Bud15}). 
As in \cite[Sec.~2.2]{LGM09} and in \cite{BBG12}, we henceforth focus  on the case where the admissible weight sequence $ \mathbf{q}$ is critical i.e.~$\sum_{n\geq 0} n^2 W_n^{(\ell)}=\infty$ for some $\ell>0$? (see \cite[Proposition 4.3]{BCMperco}) and non-generic in the sense that
\begin{eqnarray} \label{eq:asymptoticqk} q_{k}  \quad \mathop{\sim}_{k \rightarrow \infty} \quad  c\, \gamma^{k-1} \, k^{-\theta-1}, \qquad \mbox{ for a certain }  \theta\in \left( \frac{1}{2}, \frac{3}{2}\right) \textrm{ and } c, \gamma>0.  \end{eqnarray} 
The reader should keep in mind that the above weight sequence $ \mathbf{q}$ must be very fine-tuned to achieve non-generic criticality, see \cite{LGM09,Bud15,BBG12}. To avoid these complications, one may decide to work with the following concrete admissible, critical and non-generic weight sequence for fixed $\theta\in \left( \frac{1}{2}, \frac{3}{2}\right)$, see \cite[Sec.~5]{BCgrowth}:
\begin{equation}
\label{eq:explicite}q_{k}=c  \gamma^{k-1} \frac{\Gamma \left(  k-\theta -\frac{1}{2}\right) }{ \Gamma \left( \frac{1}{2}+k \right) } \mathbbm{1}_{k \geq 2}, \qquad \gamma = \frac{1}{4\theta + 2}, \qquad c= \frac{-\sqrt{\pi}}{2 \Gamma( \frac{1}{2}-\theta)}.
\end{equation} 
Note that the values $a$ and $\kappa$ in \cite{Bud15,BCgrowth} are denoted here by respectively $\theta+1$ and $\gamma$.

We denote by $\mathbb{P}^{(\ell)}$  the probability measure $ {\tt w}( \cdot \mid \cdot \in \mathsf{Map}^{(\ell)})$ and we say that a random map with distribution $\mathbb{P}^{(\ell)}$ is a $ \mathbf{q}$-Boltzmann planar map with perimeter $2\ell$. Under our assumptions on $ \mathbf{q}$, the scaling limit of these maps under $ \mathbb{P}^{(1)}$ conditioned on having a fixed large number of vertices, is given (at least along subsequences) by the so-called stable maps of Le Gall \& Miermont \cite{LGM09} (which are random compact metric spaces that  look like randomized versions of the Sierpinski carpet or gasket).

We shall also consider pointed $\mathbf{q}$-Boltzmann planar maps. By definition, a pointed map is a pair $(\map,v)$ where $\map$ is a planar map and $v \in \map$ is a vertex (which is called the distinguished vertex). We denote  by ${\mathbb{P}}^{(\ell)}_{\bullet}$ the probability distribution of the set of all pointed planar maps given by
\begin{equation}
\label{eq:defpbullet}{\mathbb{P}}^{(\ell)}_{\bullet} ((\map,v))=  \frac{{\tt w}(\map) }{W^{(\ell)}_{\bullet}}, \qquad \map \in \mathsf{Map}^{(\ell)}, v \in \map.
\end{equation}
We  say that a random variable with distribution ${\mathbb{P}}^{(\ell)}_{\bullet}$ is a pointed $ \mathbf{q}$-Boltzmann planar map with perimeter $2\ell$.  If $(B^{(\ell)}_{\bullet},v_{\bullet})$ is such a random variable, note that for every $\mathfrak{m} \in \mathsf{Map}^{(\ell)}$, we have $\P(B^{(\ell)}_{\bullet}=\mathfrak{m})=  {|\map|  {\tt w}(\map) }/{W^{(\ell)}_{\bullet}}$, so a pointed $\mathbf{q}$-Boltzmann planar map is a size-biased $ \mathbf{q}$-Boltzmann planar map with perimeter $2\ell$. Under our assumptions on $ \mathbf{q}$, the degree of a typical face of a pointed $ \mathbf{q}$-Boltzmann planar map 
  is in the domain of attraction of a stable law of index $\theta+1/2$ (see \cite[Sec.~3.2]{LGM09})

It is further possible to define an infinite version of a $ \mathbf{q}$-Boltzmann planar map with perimeter $2 \ell$  as the local limit of  $ \mathbf{q}$-Boltzmann planar maps with perimeter $2\ell$ conditioned to have  size tending to $ \infty$, see  \cite[Theorem 6.1]{St14}.  The law of  this infinite version is denoted by $\mathbb{P}^{(\ell)}_{\infty}$, which is a probability measure on the set of all infinite (bipartite) planar maps with perimeter $2 \ell$.

\paragraph{Enumeration.}
We now recall some important enumeration results (we refer to \cite{Bud15} and \cite{BBG12} for proofs, see also \cite{BCgrowth} where they have been gathered under the present form). First, recalling that $\mathbf{q}$ is an admissible, critical and non-generic weight sequence such that \eqref{eq:asymptoticqk}  holds, we read from \cite[Eq. 3.15, Eq. 3.16]{BBG12}  that
 \begin{eqnarray} \label{eq:asymptowl} W^{(\ell)}   \quad \mathop{\sim}_{\ell \rightarrow \infty} \quad   \frac{c}{ 2\cos ( (\theta+1) \,\pi)} \gamma^{-\ell-1}\ell^{-\theta-1}.\end{eqnarray}
The asymptotic behavior of $W^{(\ell)}_{\bullet}$  can be deduced from the following surprisingly universal identity (see  \cite[Eq. (17)]{Bud15})
 \begin{eqnarray} \label{eq:egalbullet}
 \gamma^\ell W_{\bullet}^{(\ell)} = h^\newsearrow(\ell) \coloneqq 2^{-2\ell} \binom{2\ell}{\ell}, \qquad  \ell \geq 1.
 \end{eqnarray}

We shall also need to estimate the asymptotic behavior of the partition functions of size-constrained Boltzmann planar maps. The following result appears in \cite[Sec.~3.3]{BBG12} for $\ell=1$, but we give a proof in the general case for completeness.

\begin{proposition}\label{prop:equivtaille}For every fixed $\ell \geq 1$,
$$ W^{(\ell)}_{n}  \quad \mathop{\sim}_{n \rightarrow \infty} \quad c_{0} \cdot h^\uparrow(\ell) \cdot \gamma^{-\ell}  \cdot n^{-\frac{4(\theta+1)}{2\theta+1}}, \quad \textrm{with } c_{0}= \frac{1}{2|\Gamma ( -  \frac{1}{\theta+1/2}) |} \cdot \left( \frac{2 \gamma \cos(\pi(\theta+1)) \Gamma(\theta+3/2) }{c \sqrt{\pi}}  \right)^{ \frac{1}{\theta+1/2}}.$$
\end{proposition}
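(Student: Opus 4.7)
My plan is to reduce the case of a general boundary $\ell\geq 2$ to the case $\ell=1$, which is established in \cite[Sec.~3.3]{BBG12} (with the normalization $h^{\uparrow}(1)=1$), by exploiting the spatial Markov (``peeling'') structure of $\mathbf{q}$-Boltzmann maps; the function $h^{\uparrow}$ will encode the boundary dependence via the harmonicity for the underlying peeling walk.

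First I would set up the peeling exploration as in \cite{Bud15}: each step reveals one face of $B^{(\ell)}$, and the perimeter $(P_k)_{k\geq 0}$ of the unexplored region is a Markov chain started at $\ell$, absorbed at $0$, whose transitions depend only on $\mathbf{q}$. The tail \eqref{eq:asymptoticqk} places the underlying unkilled walk on $\mathbb{Z}$ in the domain of attraction of a stable law of index $\theta+\tfrac12$, and $h^{\uparrow}$ is the positive harmonic function for this walk killed on $(-\infty,0]$; the law $\mathbb{P}^{(\ell)}_{\infty}$ is precisely the Doob $h^{\uparrow}$-transform. The Markov property of peeling then yields, for any fixed $k\geq 1$, the identity
$$W^{(\ell)}_n\;=\;\sum_{\mathcal{H}} w(\mathcal{H})\, W^{(P(\mathcal{H}))}_{\,n - V(\mathcal{H})},$$
where the sum runs over all non-terminated peeling histories of length $k$, $w(\mathcal{H})$ is the product of $\mathbf{q}$-weights of the revealed faces, and $P(\mathcal{H})$, $V(\mathcal{H})$ are respectively the residual perimeter and the number of discovered vertices. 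Dividing by $n^{-4(\theta+1)/(2\theta+1)}$ and passing to the limit inside the sum, using the target asymptotic on the right-hand side, reduces the identity to
$$h^{\uparrow}(\ell)\,\gamma^{-\ell}\;=\;\sum_{\mathcal{H}} w(\mathcal{H})\, h^{\uparrow}(P(\mathcal{H}))\,\gamma^{-P(\mathcal{H})}.$$
But this is exactly the harmonicity of $h^{\uparrow}$ for the $\mathbf{q}$-peeling walk iterated $k$ times (after absorbing the $\gamma^{-p}$ factors into the Cramer-type tilt which turns the partition-function-weighted peeling into the $h^{\uparrow}$-conditioned walk). The recursion is therefore self-consistent, and combined with the $\ell=1$ case it pins down the $\ell$-dependence.

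The main obstacle will be the uniformity needed to exchange the finite sum and the limit $n\to\infty$. Concretely I would need (i) a uniform bound $W^{(p)}_m\leq C(p)\,m^{-4(\theta+1)/(2\theta+1)}$ valid for all $m$ large, with $C(p)$ summable against the peeling law (one expects $C(p)\leq C'\gamma^{-p}h^{\uparrow}(p)$ up to constants), and (ii) a tail estimate showing that $V(\mathcal{H})$ is stochastically small on the relevant scale so that replacing $n-V(\mathcal{H})$ by $n$ in the asymptotic introduces an error vanishing uniformly in $\mathcal{H}$. Both follow from the domain-of-attraction assumption on the peeling walk and the moment estimates of \cite{BCgrowth}. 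Finally, the constant $c_0$ itself is identified in the $\ell=1$ case via a local limit theorem for the stable walk underlying the peeling; its explicit value given in the statement corresponds to the density at $1$ of a one-sided $(\theta+\tfrac12)$-stable law, normalized as dictated by the tail constant $c$ in \eqref{eq:asymptoticqk}.
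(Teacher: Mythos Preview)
The paper's proof is entirely different from yours and purely analytic: it uses the identity $\sum_{n\geq 0} n\, W^{(\ell)}_n\, g^n = g\cdot x(g)^{\ell}\cdot W^{(\ell)}_{\bullet}$ from \cite{BCgrowth}, where $x(g)$ satisfies a Lagrangean equation $x(g)=g\,\Phi(x(g))$ whose singular expansion at $g=1$ is read off from \eqref{eq:asymptoticqk}; transfer theorems (\cite[Thm.~VI.18]{FS09}) then give $[g^n]x(g)$ directly, and the $\ell$-dependence falls out of the factor $x(g)^{\ell}$ together with $W^{(\ell)}_{\bullet}=\gamma^{-\ell}h^{\newsearrow}(\ell)$ and $h^{\uparrow}(\ell)=2\ell\,h^{\newsearrow}(\ell)$. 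No peeling and no uniformity arguments are needed.

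Your probabilistic route has a genuine gap: the peeling recursion together with the harmonicity of $h^{\uparrow}$ shows only that the claimed asymptotic is \emph{consistent} with one peeling step; it does not show that $\lim_{n\to\infty} n^{\beta} W^{(\ell)}_n$ exists at all. There is no actual reduction to $\ell=1$: after any fixed number $k$ of peeling steps the residual half-perimeter $P(\mathcal H)$ ranges over all of $\mathbb{Z}_{\geq 0}$ (face-discovery events $\mathsf C_j$ \emph{increase} the perimeter), so the right-hand side of your identity still involves $W^{(p)}_m$ for arbitrarily large $p$, and no induction on $\ell$ is available. Moreover, the claim that $V(\mathcal H)$ is negligible on the relevant scale is not innocuous: in a gluing event $\mathsf G_{k_1,k_2}$ one of the two pieces is a full Boltzmann map whose size has tail index $\beta-1\in(\tfrac32,2)$, so the convolution $\sum_m W^{(k_1)}_m W^{(k_2)}_{n-m}$ is dominated by the big-jump regime $m\in\{O(1),\,n-O(1)\}$, and controlling it requires precisely the uniform-in-$p$ polynomial bound on $W^{(p)}_m$ that the proposition itself is supposed to provide. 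A salvageable version would argue that both $\limsup_n n^{\beta}W^{(\ell)}_n$ and $\liminf_n n^{\beta}W^{(\ell)}_n$ are $\nu$-harmonic in $\ell$ and coincide at $\ell=1$, then invoke uniqueness of $h^{\uparrow}$; but this still needs the a~priori bounds you have not established.
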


\begin{proof}Note that, for every $g \in [0,1]$,
$$\sum_{n \geq 0} n W_{n}^{(\ell)} g^{n} = \sum_{\map \in \mathsf{Map}^{(\ell)}, v \in \map } {\tt w}(\map)g^{|\map|},$$
and that this quantity is equal to $W^{(\ell)}_{\bullet}$ for $g=1$. The discussions just before and after Eq.~(24) in \cite{BCgrowth} show that 
\begin{equation}
\label{eq:g}\sum_{n \geq 0} n W_{n}^{(\ell)} g^{n} = g \cdot x(g)^{\ell} \cdot W_{\bullet}^{(\ell)},
\end{equation}
where $x(g) \in (0,1]$ is the unique solution of
$$g=F(x(g)), \qquad \textrm{where} \qquad  F(x)= \frac{x}{4 \gamma}  \left( 1-\sum_{k=1}^{\infty}  \left( \frac{x}{4\gamma} \right) ^{k-1} \binom{2k-1}{k} q_{k} \right) .$$
Equivalently, we have
$$x(g)=g \cdot \Phi(x(g)),  \textrm{ with }\Phi(z)= \frac{z}{F(z)}.$$
By \eqref{eq:asymptoticqk}, using \cite[VI.19 p407]{FS09} we have that $F(z)=1- \frac{c \Gamma(-1/2-\theta)}{2\gamma\sqrt{\pi}}(1-z)^{\theta+1/2}+o((1-z)^{\theta+1/2})$ as $z \rightarrow 1, |z|<1$, within a cone, so that
$$\Phi(z) =1+ \frac{c \Gamma(-1/2-\theta)}{2\gamma\sqrt{\pi}}(1-z)^{\theta+1/2}+o((1-z)^{\theta+1/2}) \textrm{ as } z \rightarrow 1, |z|<1, \textrm{ within a cone}.$$
By  \cite[VI.18 p407]{FS09}, it follows that
$$[g^{n}] x(g)  \quad \mathop{\sim}_{n \rightarrow \infty} \quad  \frac{1}{|\Gamma ( -  \frac{1}{\theta+1/2}) |} \cdot  \left(  \frac{2\gamma\sqrt{\pi}}{c \Gamma(-1/2-\theta)} \right)^{\frac{1}{\theta+1/2}}   \frac{1}{n^{ \frac{1}{\theta+1/2}+1}}.$$
 The result readily follows from \eqref{eq:g}.
  \end{proof}

\paragraph{Harmonic functions. }The functions  $h^{\newsearrow}$ and $h^{\uparrow}$, which, remarkably, do not depend on the weight sequence $ \mathbf{q}$ will play an important role below, in particular through their relation with a random walk whose step distribution we now define. Let $\nu$ be the probability measure on $ \mathbb{Z}$ defined by 
 \begin{eqnarray} \label{eq:defnu} \nu(k) = \left\{ \begin{array}{ll} q_{k+1} \gamma^{-k} & \mbox{for } k \geq 0 \\
2W^{(-1-k)} \gamma^{-k} & \mbox{for } k \leq -1. \end{array} \right.  \end{eqnarray}
Under our assumptions, $\nu$ is indeed a probability distribution which is centered and in the domain of attraction of the $\theta$-stable law with positivity parameter $\rho$, and which further satisfies \eqref{eq:positivity}. Let $(S_{n})_{n \geq 0}$ be the random walk on $\Z$  with independent increments distributed according to $\nu$. It has been remarked in \cite{Bud15} that $h^{\uparrow}$, with the convention $h^\uparrow(\ell)=0$ when $\ell\leq 0$, is up to a multiplicative constant the unique harmonic function on $\{1,2,3,\ldots\}$ for this random walk (we say that $h^\uparrow$ is $\nu$-harmonic at these points) that vanishes on $\{\ldots,-2,-1,0\}$. This fact has been used in \cite{Bud15} to give an alternative definition of critical weight sequences (see Corollary 1 in \cite{Bud15}). We also note that $h^\newsearrow$ is the 
 discrete derivative of the function $h^\uparrow$, namely
$$h^\newsearrow(\ell) = h^{\uparrow}(\ell+1)-h^\uparrow(\ell), \qquad \ell \geq 0.$$ It readily follows that $h^\newsearrow$ is harmonic on $\{1,2,3,\ldots\}$ and vanishes on $\{\ldots, -2,-1\}$.  By classical results \cite{BD94} we deduce that the $h$-transform of the walk $(S_{n})$ with the harmonic functions $ h^\uparrow$ (resp.\,$h^\newsearrow$) can be interpreted as the walk $(S_{n})$ conditioned to stay positive (resp.\,to be absorbed at $0$ without touching $ \mathbb{Z}^-$). This fact should be reminiscent of the various transformations performed in Sec.~\ref{sec:spine1}.

\subsection{Edge-peeling explorations}

As we said above, the growth-fragmentation processes of Sec.~\ref{sec:oneparameter} with cumulant function $\kappa_{\theta}$ will appear as a scaling limit of the perimeters of the holes encountered in the Markovian explorations of $ \mathbf{q}$-Boltzmann random planar maps. By exploration we mean the so-called ``lazy peeling process'' introduced in \cite{Bud15}.  We first present the branching peeling exploration (using the presentation of \cite{BCgrowth}) in a deterministic setting and move to random maps in the next section.

\subsubsection{Submaps in the primal and dual maps}
Let $\map$ be a (bipartite rooted) planar map. A submap $ \mathfrak{e} \subset \mathfrak{m}$ of the map $ \mathfrak{m}$ is a finite planar map with a certain number of distinguished faces with simple boundary (no pinch points) called the holes of $ \mathfrak{e}$ and which can be ``filled-in'' with appropriate planar maps with a (general) boundary so as to recover $ \mathfrak{m}$. More precisely if $ h_{1}, \ldots ,h_{k}  \in  \mathsf{Faces}(  \mathfrak{e})$ are the holes of $ \mathfrak{e}$ then there exist planar maps $ \mathfrak{u}_{1}, ... ,  \mathfrak{u}_{k}$ whose perimeters match those of $h_{1}, ... , h_{k}$ and such that the gluings of $ \mathfrak{u}_{i}$ inside $h_{i}$ give $ \mathfrak{m}$. 

To perform this gluing operation, we implicitly assume that an oriented edge has been distinguished (e.g.~using a deterministic rule) on the boundary of each hole $ h_{i}$ of $ \mathfrak{e}$, on which we glue the root edge of $ \mathfrak{u}_{i}$. Notice that after this gluing operation, it might happen that several edges on the boundary  of a given hole of $ \mathfrak{e}$ get identified because the boundary of $ \mathfrak{u}_{i}$ may not be simple, see Fig.\,\ref{fig:gluing} below. We will alternatively speak of ``gluing'' as ``filling-in the hole''.

\begin{figure}[!h]
 \begin{center}
 \includegraphics[width=12cm]{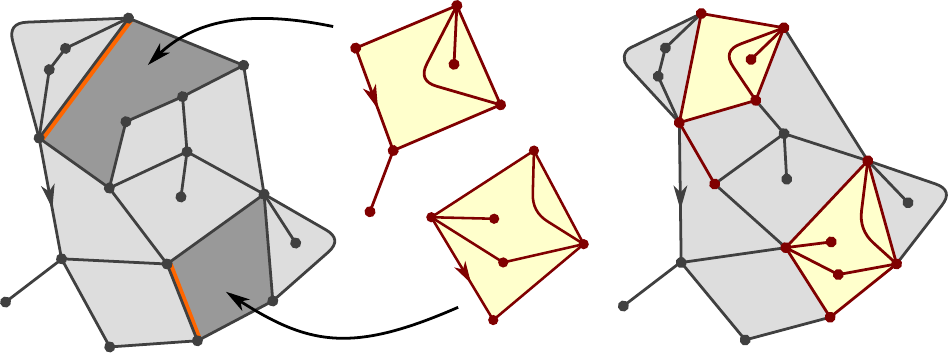}
 \caption{ \label{fig:gluing} An illustration of the filling-in of holes: The left figure shows a planar map with two holes (the darker faces) equipped with distinguished edges on their boundary. The right figure shows the result of filling-in the holes with the pair of maps shown in the middle.}
 \end{center}
 \end{figure}
It is easy to see that this operation is rigid (see \cite[Definition 4.7]{AS03}) in the sense that if $ \mathfrak{e} \subset \map$, then the maps $ (\mathfrak{u}_{i})_{1 \leq i \leq k}$ are uniquely defined (in other words, if one glues different maps inside a given planar map with holes, one gets different maps after the gluing procedure). This definition even makes sense when $ \mathfrak{e}$ is a finite map and $ \map$ is an infinite map.

\subsubsection{Branching edge-peeling explorations}
\label{sec:br}

We now define the branching edge-peeling exploration, which is a means to explore a planar map edge after edge. If $ \mathfrak{e}$ is a planar map with holes, a cycle\footnote{Contrary to \cite{BCK}, the cycles cannot be seen as self-avoiding loops on the original map $\map$ since they are closed paths which may visit twice the same edge; they are called frontiers in \cite{Bud15}. } of $\mathfrak{e}$ is by definition a connected subset of edges adjacent to a hole of $ \mathfrak{e}$. We denote by $ \mathcal{C}( \mathfrak{e})$ the union of the cycles of $ \mathfrak{e}$.  Formally, a branching peeling exploration depends on a function $ \mathcal{A}$, called the \emph{peeling algorithm}, which associates with any planar map with holes $  \mathfrak{e}$  an edge of $ \mathcal{C}( \mathfrak{e}) \cup \{\dagger\}$, where $\dagger$ is a cemetery point which we interpret as ending the exploration. In particular, if $  \mathfrak{e}$ has no holes, we must have $ \mathcal{A}(  \mathfrak{e}) = \dagger$. We say that this peeling algorithm is deterministic, meaning that no randomness is involved in the definition of $ \mathcal{A}$. 

 Intuitively speaking, given the peeling algorithm $ \mathcal{A}$, the branching edge-peeling process of a planar map $\map$ is a way to iteratively explore $\map$ starting from its boundary and  discovering at each step a new edge by \emph{peeling an edge}  determined by the algorithm $ \mathcal{A}$. If $\mathfrak{e} \subset \map$ is a planar map with holes and $e$ is an edge belonging to a cycle  $\mathscr{C}$ of $\mathfrak{e}$, the planar map with holes $\mathfrak{e}_{e}$ obtained by peeling $e$ is defined as follows. Let $\mathsf{F}_{e}$ be the face of $\map$ that is adjacent to the same side of $e$ as the hole in $\mathfrak{e}$. Then there are two possibilities, see Fig.\,\ref{fig:peel}:

\begin{itemize}
\item \emph{Event $ \mathsf{C}_{k}$}: the face $\mathsf{F}_{e}$ is not a face of $\mathfrak{e}$ and has degree $2k$. Then  $\mathfrak{e}_{e}$ is obtained by gluing $\mathsf{F}_{e}$ on $e$.

\item  \textit{Event $\mathsf{G}_{k_{1},k_{2}}$}: the face $\mathsf{F}_{e}$ is actually a face of $\mathfrak{e}$. In this case, the edge $e$ is identified in $\map$ with another edge $e'$ of the same cycle $ \mathcal{C}$ where $2k_{1}$ (resp.~$2k_{2}$) is the number of edges of $ \mathcal{C}$ strictly between $e$ and $e'$ when turning in clockwise order around the cycle, and $\mathfrak{e}_{e}$ is the map after this identification in $\mathfrak{e}$.
\end{itemize}

When $k_{1}>0$ and $k_{2}>0$, note that the event $\mathsf{G}_{k_{1},k_{2}}$ results in the splitting of a hole into two holes, and the event $\mathsf{G}_{0,0}$ results in the disappearance of a hole.

\begin{figure}[!h]
 \begin{center}
   \includegraphics[width=.9\linewidth]{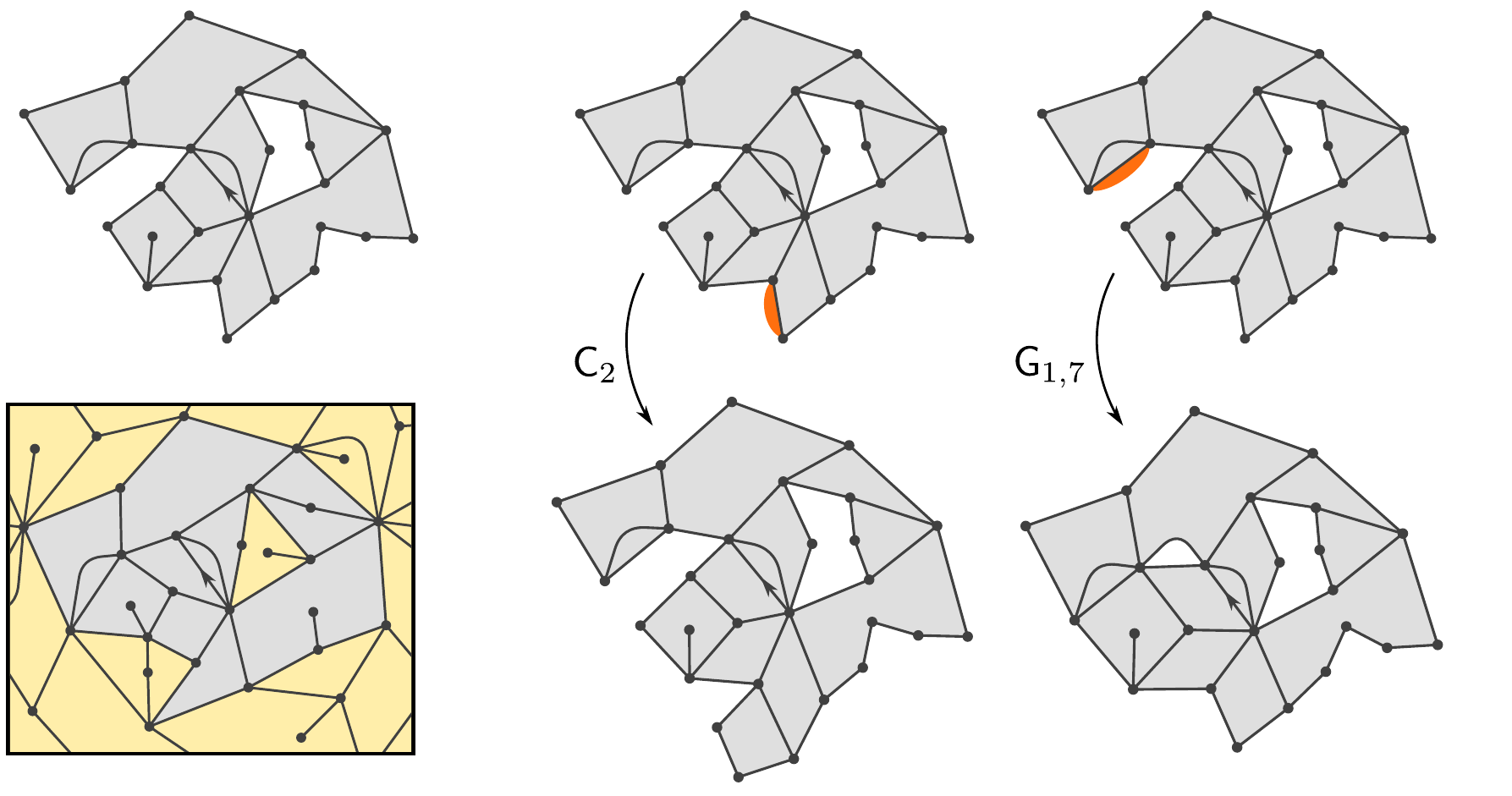}
 \caption{\label{fig:peel}Illustration of the different edge-peeling events. The left column illustrates the submap $\emap$ (top) of a map $\map$ (bottom). The center and right columns represent two different peeling events, with the edge to be peeled indicated in orange.}
 \end{center}
 \end{figure}

Formally, if $\map$ is a (finite or infinite) planar map, the branching edge-peeling exploration of $ \map$ with algorithm $ \mathcal{A}$ is by definition the sequence of planar maps with holes
$$ \mathfrak{e}_{0}(\map) \subset  \mathfrak{e}_{1}(\map) \subset \cdots \subset \mathfrak{e}_{n}(\map) \subset \cdots \subset \map,$$ obtained as follows:
\begin{itemize}
\item the map $  \mathfrak{e}_{0}(\map)$ is made of a simple face corresponding to the root face $  \rootface$ of the map and a unique hole of the same perimeter;
\item for every $ i \geq 0$, if  $  \mathcal{A}(\mathfrak{e}_{i}(\map)) \ne \dagger$, then the planar map with holes $ \mathfrak{e}_{i+1}(\map)$ is obtained from $ \mathfrak{e}_{i}(\map)$ by peeling the edge $ \mathcal{A}( \mathfrak{e}_{i}(\map))$. If $  \mathcal{A}(\mathfrak{e}_{i}(\map))= \dagger$, then $ \mathfrak{e}_{i+1}(\map) = \mathfrak{e}_{i}(\map)$ and the exploration process stops.
\end{itemize}

In particular, observe that if  $\mathfrak{e}_{i}(\map) \neq  \mathfrak{e}_{i-1}(\map)$ with $i \geq 1$, then $\mathfrak{e}_{i}(\map) $ has exactly $i$ internal edges (which are by definition edges of $\mathfrak{e}_{i}(\map)$ which do not belong to cycles).  If $i \geq 0$, the map with holes  $ \mathfrak{e}_{i}( \map)$ is obviously a (deterministic) function of $ \map$. But  note that  $( \mathfrak{e}_{j}( \map); 0 \leq j \leq i)$
is also a (deterministic) function of $ \mathfrak{e}_{i}( \map)$
. Finally, to simplify notation, we will often write $  \mathfrak{e}_{i}$ instead of $  \mathfrak{e}_{i}(\map)$.

\begin{remark} At this point, the reader may compare the above presentation with that of \cite[Sec.~2.3]{BCK}. In the peeling process considered in \cite[Sec.~2.3]{BCK}, the sequence $ \mathfrak{e}_{0} \subset \cdots \subset \mathfrak{e}_{n} \subset \cdots \subset \map$ is again a sequence of maps with simple holes (with the slight difference that in this case the holes can share vertices but not edges) but (unless the peeling has stopped), $ \mathfrak{e}_{i+1}$ is obtained from $ \mathfrak{e}_{i}$ by the addition of a new \emph{face}. Furthermore, in this peeling process, $ \map$ is obtained from $ \mathfrak{e}_{i}$ by filling-in the holes of $ \mathfrak{e}_{i}$ with maps having \emph{simple} boundary. In other words, the peeling process of \cite[Sec.~2.3]{BCK} is ``face''-peeling, while in the present work we have an ``edge''-peeling.
\end{remark}

In the sequel,  we usually simply say peeling exploration instead of branching edge-peeling exploration. Notice also that in our notation the sequence of explored maps $ (\mathfrak{e}_{i})$ depends obviously on the underlying map, but also on the peeling algorithm $ \mathcal{A}$. In the following it should be clear from the context which statements are valid for all peeling explorations and which for specific ones.

\subsection{Peeling of random Boltzmann maps}
\label{sec:peeling}

When dealing with finite or infinite Boltzmann planar maps, we  work on the canonical space $ \Omega$ of all the (rooted bipartite, possibly infinite) random maps with holes, possibly pointed. This space is equipped with the Borel $\sigma$-field for the local topology. The notation \begin{center}
$ \mathbb{P}^{(\ell)}, \mathbb{E}^{(\ell)},\quad$ resp.~$ \mathbb{P}^{(\ell)}_{\infty}, \mathbb{E}^{(\ell)}_{\infty},\quad$ resp.~${\mathbb{P}}^{(\ell)}_{\bullet},{\mathbb{E}}^{(\ell)}_{\bullet}$\end{center} is used for the probability and expectation on $\Omega$ relative to the law of a  $\mathbf{q}$-Boltzmann map with perimeter $2 \ell$, resp.~the infinite $ \mathbf{q}$-Boltzmann map with perimeter $ 2 \ell$, resp.\,a pointed $\mathbf{q}$-Boltzmann map with perimeter $2 \ell$. A generic element of the canonical space will be either denoted by $\map$ or by $\map_{\bullet}=(\map,v_{\bullet})$ if it is pointed.

In this section, we fix a peeling algorithm $ \mathcal{A}$ and  first  compute the law of the branching edge-peeling exploration  $ \mathfrak{e}_{0} \subset \mathfrak{e}_{1} \subset \cdots \subset \map$ under $ \mathbb{P}^{(\ell)}$,  $ {\mathbb{P}}^{(\ell)}_{\bullet}$ and $ \mathbb{P}^{(\ell)}_{\infty}$. We denote by $ \mathcal{F}_{n}$ the $\sigma$-algebra on $\Omega$ generated by the functions $\map \mapsto \mathfrak{e}_{0}(\map) ,\map \mapsto \mathfrak{e}_{1}(\map), \ldots, \map \mapsto  \mathfrak{e}_{n}(\map)$.  If $ \mathrm{e}$ is a planar map with holes, we set
$$ \widetilde{\tt w}( \mathrm{e}) = \prod_{ \begin{subarray}{c}
f \in \faces( \mathrm{e}) \backslash \{ \rootface\} \\
f \mathrm{ \ is \ not \ a \ hole}
\end{subarray}} q_{ \mathrm{deg}(f)/2}$$
and let $|\mathrm{e}|$ be the number of internal vertices of $\mathrm{e}$ (an internal vertex of $\mathrm{e}$ is a vertex that does not belong to a cycle of  $\mathrm{e}$).
Finally, to simplify notation,  for every $\ell \geq 0$, we set
$$ f^\uparrow( \ell) = \frac{ h^\uparrow(\ell)}{W^{(\ell)}\gamma^{\ell}} \qquad \mbox{ and } \qquad f^{{\newsearrow}}( \ell) = \frac{ h^{\newsearrow }(\ell)}{W^{(\ell)}\gamma^{\ell }} \underset{ \eqref{eq:egalbullet}}{=}   \frac{W^{(\ell)}_{\bullet}}{W^{(\ell)}}.$$
 Note that, by \eqref{eq:asymptowl} we have for some $c,c'>0$
\begin{equation}
\label{eq:asymph} f^\uparrow(\ell)  \quad \mathop{\sim}_{\ell \rightarrow \infty} \quad  c\cdot 
\ell^{\theta+3/2}, \qquad  f^\newsearrow(\ell)  \quad \mathop{\sim}_{\ell \rightarrow \infty} \quad c'
\cdot \ell^{\theta+1/2}.
\end{equation}

\begin{proposition} \label{prop:peelinggenerallaw} Fix $\ell \geq 1$ and $n \geq 0$. Let $ \mathrm{e}$ be a planar map with holes which can be obtained after $n$ peeling steps starting from a simple face of perimeter $2\ell$. Denote by   $\ell_{1}, \ell_{2}, \ldots, \ell_{k}$ the half-perimeters of the holes of $ \mathrm{e}$. Then
\begin{equation}
\label{eq:peellaw1}\mathbb{P}^{(\ell)}(   \mathfrak{e}_{n} = \mathrm{e}) = \frac{\widetilde{\tt w}(\mathrm{e})}{W^{(\ell)}} \prod_{i = 1}^k W^{( \ell_{i})}, \qquad {\mathbb{P}}^{(\ell)}_{\bullet}(   \mathfrak{e}_{n} = \mathrm{e}) = \frac{1}{f^\newsearrow(\ell)} \left(|\mathrm{e}|+ \sum_{j=1}^{k} f^\newsearrow( \ell_{j}) \right) \cdot \mathbb{P}^{(\ell)}(   \mathfrak{e}_{n} = \mathrm{e}) 
\end{equation}
and
\begin{equation}
\label{eq:peellaw2}\mathbb{P}^{(\ell)}_{\infty}( \mathfrak{e}_{n}=  \mathrm{e}) =  \frac{1}{f^\uparrow(\ell)}    \left(  \sum_{j= 1}^k f^\uparrow( \ell_{j}) \right) \cdot \mathbb{P}^{(\ell)}(   \mathfrak{e}_{n} = \mathrm{e}).
\end{equation}
Furthermore:
\begin{enumerate}
\item[(i)]
Under $ \mathbb{P}^{(\ell)}$ and conditionally on $\{  \mathfrak{e}_{n} =  \mathrm{e}\}$, the random maps filling-in the holes of $ \mathfrak{e}_{n}$ inside $ \map$ are independent $ \mathbf{q}$-Boltzmann maps. 
\item[(ii)] Under $ {\mathbb{P}}^{(\ell)}_{\bullet}$, conditionally given that $\{  \mathfrak{e}_{n} =  \mathrm{e}\}$ and that  $v_{\bullet}$ is not an internal vertex of $\mathrm{e}$,  the planar maps filling-in the holes of $ \mathfrak{e}_{n}$ inside $ \map$ are independent, all being $ \mathbf{q}$-Boltzmann maps, except for the $J$-th hole which is filled-in with a pointed $ \mathbf{q}$-Boltzmann map distributed as  $ {\mathbb{P}}^{(\ell_{J})}_{\bullet}$,  where the index $J$ is chosen at random, independently and proportionally to $j \mapsto f^{\newsearrow}(\ell_{j})$.
\item[(iii)] Under $ \mathbb{P}^{(\ell)}_{\infty}$ and conditionally on $\{  \mathfrak{e}_{n} =  \mathrm{e}\}$,  the planar maps filling-in the holes of $ \mathfrak{e}_{n}$ inside $ \map$ are independent, all being $ \mathbf{q}$-Boltzmann maps, except for the $J$-th hole which is filled-in with an infinite $ \mathbf{q}$-Boltzmann map of perimeter $2 \ell_{J}$ where the index $J$ is chosen at random, independently and proportionally to $j \mapsto f^{\uparrow}(\ell_{j})$.
 \end{enumerate}
\end{proposition}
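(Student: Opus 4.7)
The plan is to compute, for each of the three measures, the total weight of maps $\mathfrak{m}$ with $\mathfrak{e}_n(\mathfrak{m}) = \mathrm{e}$, by summing over the ``fillings'' of the $k$ holes of $\mathrm{e}$; the explicit formula then exposes (i)--(iii) by direct reading. The crucial inputs are the rigidity of the gluing operation and the multiplicativity of ${\tt w}$ over faces.

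For the first identity of \eqref{eq:peellaw1}: because the peeling algorithm is deterministic, any $\mathfrak{m}$ with $\mathfrak{e}_n(\mathfrak{m}) = \mathrm{e}$ is obtained \emph{uniquely} by filling the $k$ holes of $\mathrm{e}$ with some tuple $(\mathfrak{u}_1, \ldots, \mathfrak{u}_k) \in \mathsf{Map}^{(\ell_1)} \times \cdots \times \mathsf{Map}^{(\ell_k)}$. Since the hole of $\mathrm{e}$ becomes, after gluing, the (unweighted) root face of $\mathfrak{u}_j$, we get ${\tt w}(\mathfrak{m}) = \widetilde{\tt w}(\mathrm{e}) \prod_j {\tt w}(\mathfrak{u}_j)$; summing over fillings gives the stated formula and (i) as a by-product, since the $\mathfrak{u}_j$'s are then independent ${\bf q}$-Boltzmann.

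For the pointed case the key combinatorial observation is the identity $|\mathfrak{m}| = |\mathrm{e}| + \sum_{j=1}^k |\mathfrak{u}_j|$. This is because, in the gluing, each boundary vertex of the $j$-th hole of $\mathrm{e}$ (which lies on a cycle, hence is not among the $|\mathrm{e}|$ internal vertices) is identified with exactly one boundary vertex of $\mathfrak{u}_j$, so the hole-boundary vertices are already accounted for in $|\mathfrak{u}_j|$ and nothing is double-counted. Summing $|\mathfrak{m}|\,{\tt w}(\mathfrak{m})$ over the fillings yields
$$
W^{(\ell)}_{\bullet}\,\mathbb{P}^{(\ell)}_{\bullet}(\mathfrak{e}_n = \mathrm{e}) = \widetilde{\tt w}(\mathrm{e}) \Bigl( |\mathrm{e}| \prod_j W^{(\ell_j)} + \sum_{j=1}^k W^{(\ell_j)}_{\bullet} \prod_{i \neq j} W^{(\ell_i)} \Bigr),
$$
which, after dividing by $W^{(\ell)}$ and using $f^{\newsearrow}(\ell) = W^{(\ell)}_{\bullet}/W^{(\ell)}$, rearranges into the pointed formula in \eqref{eq:peellaw1}. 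Assertion (ii) then follows by reading the decomposition: the $|\mathrm{e}|$-term corresponds to $v_\bullet$ being internal in $\mathrm{e}$, and the $j$-th term to $v_\bullet$ lying in the $j$-th filling with that filling being a pointed Boltzmann map of perimeter $2\ell_j$; the relative weights give precisely the announced size-biased choice of the index $J \propto f^{\newsearrow}(\ell_j)$.

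For the infinite case I would start from the identification of $\mathbb{P}^{(\ell)}_{\infty}$ as the local limit of $\mathbb{P}^{(\ell)}$ conditioned on having many vertices (Stephenson \cite{St14}) applied to events depending only on $\mathfrak{e}_n$. By (i), conditionally on $\mathfrak{e}_n = \mathrm{e}$ the quantity $|\mathfrak{m}| - |\mathrm{e}|$ is the sum of independent variables $|\mathfrak{u}_j|$ whose tails are given by Proposition~\ref{prop:equivtaille}:
$$
\mathbb{P}(|\mathfrak{u}_j| \geq N) \sim c_0\, f^{\uparrow}(\ell_j)\, N^{-\frac{4(\theta+1)}{2\theta+1}+1}\cdot \mathrm{const}, \qquad N\to\infty,
$$
so the ``one big jump'' principle applied to this heavy-tailed sum shows that the conditioning is asymptotically realized by a single filling $\mathfrak{u}_J$ being large, with $J$ chosen proportionally to $f^{\uparrow}(\ell_j)$. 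Taking the ratio with the same asymptotic for a single-hole map (corresponding to $\mathrm{e} = \rootface$ of perimeter $2\ell$) produces exactly the factor $\sum_j f^{\uparrow}(\ell_j)/f^{\uparrow}(\ell)$, and a second application of this argument shows that the conditional law of the large filling converges to $\mathbb{P}^{(\ell_J)}_{\infty}$, proving (iii) together with \eqref{eq:peellaw2}.

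The main obstacle is the combinatorial identity $|\mathfrak{m}| = |\mathrm{e}| + \sum_j |\mathfrak{u}_j|$: when a filling $\mathfrak{u}_j$ has non-simple boundary, several positions of its boundary walk are the same vertex, which forces the gluing to identify several distinct hole-boundary vertices of $\mathrm{e}$ into a single vertex of $\mathfrak{m}$. One must check that this identification process is consistent and that the resulting total vertex count is indeed $|\mathrm{e}|$ plus the vertices of each $\mathfrak{u}_j$ counted with multiplicity one. Once this is settled, the remaining steps are routine algebraic rearrangements and a classical one-big-jump estimate.
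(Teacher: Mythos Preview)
Your proposal is correct and follows essentially the same route as the paper. The paper also derives the $\mathbb{P}^{(\ell)}$ formula directly from rigidity and multiplicativity of the Boltzmann weight, handles $\mathbb{P}^{(\ell)}_{\bullet}$ by splitting according to whether $v_\bullet$ is internal or lies in some $\mathfrak{u}_j$ (which is exactly your identity $|\mathfrak{m}|=|\mathrm{e}|+\sum_j|\mathfrak{u}_j|$ unpacked), and treats $\mathbb{P}^{(\ell)}_{\infty}$ by conditioning on $|\mathfrak{m}|=N$, applying the asymptotics of Proposition~\ref{prop:equivtaille}, and invoking the one-big-jump phenomenon (they cite \cite[Lemma~2.5]{AS03}) before passing to the local limit via \cite{St14}. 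The vertex-counting identity you single out as the main obstacle is used by the paper as well, without further comment; it holds because the hole boundaries in $\mathrm{e}$ are simple, so every cycle vertex of $\mathrm{e}$ is absorbed into exactly one $\mathfrak{u}_j$ under the gluing, even when the boundary of $\mathfrak{u}_j$ is not simple.
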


\begin{proof} Since the peeling algorithm is deterministic and by rigidity, the event $ \{\mathfrak{e}_{n} = \mathrm{e}\} $ happens if and only if $\map$ is obtained from $\mathrm{e}$ by filling-in the holes of $\mathrm{e}$ with certain maps. The assertions concerning $ \mathbb{P}^{(\ell)}$  simply follow from the definition of the Boltzmann measure. The identities concerning $\mathbb{P}^{(\ell)}_{\infty}$ are obtained in a similar way to those of Proposition 3 in \cite{BCK}, but we give a full proof for sake of completeness. We have
\begin{eqnarray*}
 {\mathbb{P}}^{(\ell)}( \mathfrak{e}_{n}(\mathfrak{m})=\textrm{e} \mid  |\mathfrak{m}|=N) &=& \frac{\widetilde{\tt  w}(\textrm{e})}{W^{(\ell)}_{N}}   \sum_{ \underset{\map_{1} \in \mathsf{Map}^{(\ell_{1})}_{n_{1}}, \ldots,\map_{k} \in \mathsf{Map}^{(\ell_{k})}_{n_{k}}}{n_{1}+ \cdots+n_{k}=N-|\mathrm{e}|} } {\tt w}(\map_{1}) \cdots {\tt w}(\map_{k}) \\
 &=& \frac{\widetilde{\tt w}(\textrm{e})}{W^{(\ell)}_{N}}   \sum_{{n_{1}+ \cdots+n_{k}=N-|\mathrm{e}|}} W^{(\ell_{1})}_{n_{1}} \cdots W^{(\ell_{k})}_{n_{k}}.
 \end{eqnarray*}
Using Proposition \ref{prop:equivtaille}, it is then an easy matter to
verify that, for any $\varepsilon>0$, we can choose $K$ sufficiently large so that as $N \rightarrow \infty$, the asymptotic contribution of terms
corresponding to choices of $n_{1}, \ldots, n_{k}$ where $n_{i} \geq K$ for two distinct values of $ i \in \{1, \ldots, m\}$ is bounded above by $\varepsilon$ (see \cite[Lemma 2.5]{AS03}), so that by Proposition \ref{prop:equivtaille}
\begin{eqnarray*}
 {\mathbb{P}}^{(\ell)}( \mathfrak{e}_{n}(\mathfrak{m})=\textrm{e} \mid  |\mathfrak{m}|=N)    \displaystyle \mathop{\longrightarrow}_{N \rightarrow \infty}   \frac{\widetilde{\tt w}(\textrm{e})}{\gamma^{-\ell} h^\uparrow(\ell)} \cdot  \sum_{i=1}^{k} \gamma^{-\ell_{i}}h^\uparrow(\ell_{i}) \prod_{\underset{j \neq i}{j =1}}^{k} W^{(\ell_{j})} =
 \frac{1}{f^\uparrow(\ell)}    \left(  \sum_{j= 1}^k f^\uparrow( \ell_{j}) \right) \cdot \mathbb{P}^{(\ell)}(   \mathfrak{e}_{n} = \mathrm{e}).
 \end{eqnarray*}
By \cite[Theorem 6.1]{St14}, $ {\mathbb{P}}^{(\ell)}( \mathfrak{e}_{n}(\mathfrak{m})=\textrm{e} \mid  |\mathfrak{m}|=N) \rightarrow \mathbb{P}^{(\ell)}_{\infty}( \mathfrak{e}_{n}=  \mathrm{e}) $. This completes the proof of \eqref{eq:peellaw2}. The assertion $(iii)$ is established by using similar arguments and is left to the reader.

 Let us now prove the assertions concerning ${\mathbb{P}}^{(\ell)}_{\bullet}$. By definition of  ${\mathbb{P}}^{(\ell)}_{\bullet}$ (see \eqref{eq:defpbullet}),
\begin{eqnarray*}
 {\mathbb{P}}^{(\ell)}_{\bullet}(   \mathfrak{e}_{n} = \mathrm{e}, v_{\bullet} \textrm{ is an internal vertex of } \mathrm{e}) & = & \frac{ |e| \widetilde{\tt w}(\textrm{e})}{W^{(\ell)}_{\bullet}} \cdot \sum_{\map_{1} \in \mathsf{Map}^{(\ell_{1})}, \ldots,\map_{k} \in \mathsf{Map}^{(\ell_{k})}} {\tt w}(\map_{1}) \cdots {\tt w}(\map_{k})\\
 &=&  \frac{ |e| \widetilde{\tt w}(\textrm{e})}{W^{(\ell)}_{\bullet}} \cdot W^{(\ell_{1})} \cdots W^{(\ell_{k})} =\frac{ \widetilde{\tt w}(\mathrm{e})}{ W^{(\ell)}}   \left(  \prod_{i = 1}^k W^{( \ell_{i})} \right) 
 \cdot \frac{1}{f^\newsearrow(\ell)} \cdot |\mathrm{e}|,
\end{eqnarray*}
where we have used the fact that $ f^{{\newsearrow}}( \ell) ={W^{(\ell)}_{\bullet}}/{W^{(\ell)}}$ for the last equality. Similarly, for $1 \leq j \leq k$,
\begin{eqnarray*}
 {\mathbb{P}}^{(\ell)}_{\bullet}(   \mathfrak{e}_{n} = \mathrm{e}, v_{\bullet} \in \map_{j}) & = & \frac{\widetilde{\tt w}(\textrm{e})}{W^{(\ell)}_{\bullet}} \cdot \sum_{\map_{1} \in \mathsf{Map}^{(\ell_{1})}, \ldots,\map_{k} \in \mathsf{Map}^{(\ell_{k})}} |\map_{j}| \cdot {\tt w}(\map_{1}) \cdots {\tt w}(\map_{k})\\
 &=&  \frac{\widetilde{\tt w}(\textrm{e})}{W^{(\ell)}_{\bullet}} \cdot W^{(\ell_{1})} \cdots W^{(\ell_{k})}  \cdot \frac{W^{(\ell_{j})}_{\bullet}}{W^{(\ell_{j})}}=\frac{ \widetilde{\tt w}(\mathrm{e})}{ W^{(\ell)}}   \left(  \prod_{i = 1}^k W^{( \ell_{i})} \right) 
 \cdot \frac{1}{f^\newsearrow(\ell)} \cdot f^\newsearrow(\ell_{j}).
\end{eqnarray*}
The expression of ${\mathbb{P}}^{(\ell)}_{\bullet}(   \mathfrak{e}_{n} = \mathrm{e}) $ then follows by summing the previous equalities. Assertion $(ii)$ is established in a similar way, and we leave details to the reader.\end{proof}

\medskip

We now exhibit two martingales that appear in every peeling exploration of a $ \mathbf{q}$-Boltzmann planar map. As before, we denote by $ \mathfrak{e}_{0} \subset \cdots \subset \mathfrak{e}_{n} \subset \cdots $ the branching peeling exploration of an underlying planar map $\map$ (recall that the peeling algorithm $ \mathcal{A}$ is fixed). To simplify notation, for every $n \geq 0$, we let $\boldsymbol{\ell}(n)=(\ell_{1}(n), \ell_{2}(n), \ldots)$ be the lengths of the holes of the explored map $ \mathfrak{e}_{n}$. If $g$ is a function and $\boldsymbol{\ell}=(\ell_{1}, \ell_{2}, \cdots)$ is a  sequence of integers we simply write $g( \boldsymbol{ \ell})$ for the sum $ g( \ell_{1}) + g( \ell_{2}) + \cdots$.

\begin{proposition} Under $ \mathbb{P}^{(\ell)}$, the processes 
$$ \Big(f^\uparrow \big(  \boldsymbol{ \ell}(n)\big)\Big)_{n \geq 0} \quad \mbox{ and } \quad \Big( |  \mathfrak{e}_{n}| + f^\newsearrow \big(  \boldsymbol{ \ell}(n)\big)\Big)_{n \geq 0}$$
are positive $( \mathcal{F}_{n})$ martingales, which are respectively called the cycle and the area martingales. The cycle martingale is not uniformly integrable and converges almost surely to $0$, whereas the area martingale is closed and at time $n$ is equal to the conditional expectation of the total number of vertices of a  $ \mathbf{q}$-Boltzmann planar map given the map with holes obtained after $n$ peeling steps.
\label{prop:martingales}
\end{proposition}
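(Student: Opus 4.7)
My plan is to read off both martingale properties directly from Proposition \ref{prop:peelinggenerallaw} by identifying the two processes, suitably normalized, as density processes between the probability measures $\mathbb{P}^{(\ell)}$, $\mathbb{P}^{(\ell)}_\bullet$, and $\mathbb{P}^{(\ell)}_\infty$ on the canonical space. Indeed, the formulas in \eqref{eq:peellaw1}--\eqref{eq:peellaw2} say exactly that, on the $\sigma$-algebra $\mathcal{F}_n$, the Radon-Nikodym derivatives are
$$\left.\frac{\d\mathbb{P}^{(\ell)}_\infty}{\d\mathbb{P}^{(\ell)}}\right|_{\mathcal{F}_n}=\frac{f^\uparrow(\boldsymbol{\ell}(n))}{f^\uparrow(\ell)},\qquad\left.\frac{\d\mathbb{P}^{(\ell)}_\bullet}{\d\mathbb{P}^{(\ell)}}\right|_{\mathcal{F}_n}=\frac{|\mathfrak{e}_n|+f^\newsearrow(\boldsymbol{\ell}(n))}{f^\newsearrow(\ell)}.$$
Because the measures $\mathbb{P}^{(\ell)}_\infty$ and $\mathbb{P}^{(\ell)}_\bullet$ are well-defined probability measures on $\Omega$ consistent with the filtration $(\mathcal{F}_n)_{n\geq 0}$, the density processes are automatically nonnegative $(\mathcal{F}_n,\mathbb{P}^{(\ell)})$-martingales; multiplying by the (deterministic, strictly positive) constants $f^\uparrow(\ell)$ and $f^\newsearrow(\ell)$ preserves this.

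For the explicit description of the area martingale I would combine part (i) of Proposition \ref{prop:peelinggenerallaw} with the obvious identity $f^\newsearrow(\ell)=W^{(\ell)}_\bullet/W^{(\ell)}=\mathbb{E}^{(\ell)}[|\map|]$. Conditionally on $\mathfrak{e}_n=\mathrm{e}$, the full map $\map$ consists of $\mathrm{e}$ together with independent $\mathbf{q}$-Boltzmann maps $\map_1,\ldots,\map_k$ glued into its holes; summing their expected sizes yields
$$\mathbb{E}^{(\ell)}\bigl[|\map|\,\big|\,\mathcal{F}_n\bigr]=|\mathfrak{e}_n|+\sum_{i=1}^{k}\mathbb{E}^{(\ell_i(n))}[|\map_i|]=|\mathfrak{e}_n|+f^\newsearrow(\boldsymbol{\ell}(n)),$$
which is the claimed closure property (and reconfirms the martingale property, since any conditional expectation of an integrable random variable along a filtration is a uniformly integrable martingale; integrability is guaranteed by $\mathbb{E}^{(\ell)}[|\map|]=f^\newsearrow(\ell)<\infty$, i.e.\ admissibility of $\mathbf{q}$).

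The remaining claim is that the cycle martingale converges a.s.\ to $0$ and is not uniformly integrable. The non--uniform integrability is immediate: its $\mathbb{P}^{(\ell)}$-expectation equals $1$ at every $n$, so any $L^1$ limit could not be zero. For the a.s.\ convergence to $0$, I would invoke the standard fact that a density process $Z_n=\d\mathbb{Q}|_{\mathcal{F}_n}/\d\mathbb{P}|_{\mathcal{F}_n}$ converges $\mathbb{P}$-a.s.\ to the density of the absolutely continuous part of $\mathbb{Q}$ with respect to $\mathbb{P}$ on $\mathcal{F}_\infty$. Here the main point is therefore to verify that $\mathbb{P}^{(\ell)}$ and $\mathbb{P}^{(\ell)}_\infty$ are mutually singular, which I would do by exhibiting a tail event of $(\mathcal{F}_n)$ that separates them: under $\mathbb{P}^{(\ell)}$ the map has finitely many vertices a.s., hence the peeling algorithm terminates and the perimeter sequence $\boldsymbol{\ell}(n)$ is eventually empty, whereas under $\mathbb{P}^{(\ell)}_\infty$ the explored map $\mathfrak{e}_n$ always contains an infinite map in one of its holes and the total perimeter of the holes is unbounded (this is essentially the content of Proposition \ref{prop:peelinggenerallaw}(iii)).

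The main obstacle is the last step: a fully clean argument for a.s.\ vanishing of the cycle martingale requires a choice of peeling algorithm for which one can explicitly check that $\boldsymbol{\ell}(n)$ does not stay bounded under $\mathbb{P}^{(\ell)}_\infty$ (so that the two measures are genuinely singular on $\mathcal{F}_\infty$). Once that singularity is established, the martingale convergence theorem together with the Lebesgue decomposition of $\mathbb{P}^{(\ell)}_\infty$ with respect to $\mathbb{P}^{(\ell)}$ finishes the proof.
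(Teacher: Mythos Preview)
Your approach is essentially the same as the paper's: the authors also derive the cycle martingale by recognizing $f^\uparrow(\boldsymbol{\ell}(n))/f^\uparrow(\ell)$ as the Radon--Nikodym density of $\mathbb{P}^{(\ell)}_\infty$ with respect to $\mathbb{P}^{(\ell)}$ on $\mathcal{F}_n$, and the area martingale from the identity $\mathbb{E}^{(\ell)}[|\map|]=f^\newsearrow(\ell)$ combined with Proposition~\ref{prop:peelinggenerallaw}(i), referring to \cite{BCK} for details. Your singularity argument for a.s.\ vanishing of the cycle martingale is a bit more elaborate than necessary---under $\mathbb{P}^{(\ell)}$ the map is finite, so (for an algorithm that peels as long as holes remain) the hole perimeters eventually all vanish and the martingale is literally $0$ from some step on---but the logic is sound and the caveat you flag about the peeling algorithm is the correct one.
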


\begin{remark} For the connection with growth-fragmentation processes that we have in mind, the above two martingales are the discrete analogs of respectively the martingale $M^+$ and the martingale $M^-$ studied in the preceding sections.
\end{remark}
\begin{proof} The proofs are the same as that of Propositions 6 and 7 in \cite{BCK}. More precisely, from Proposition \ref{prop:peelinggenerallaw} it follows that $f^\uparrow( \boldsymbol{\ell}(n))/ f^\uparrow(\ell)$ is the Radon-Nikodym derivative of the branching peeling exploration until step $n$ under $ \mathbb{P}^{(\ell)}_{\infty}$ with respect to the same exploration under $ \mathbb{P}^{(\ell)}$. This implies the first assertion. For the second one, we similarly rely on Proposition \ref{prop:peelinggenerallaw} and observe that
\begin{equation}
\label{eq:esvolume}\mathbb{E}^{(\ell)}(|\map|) = \frac{W^{(\ell)}_{\bullet}}{W^{(\ell)}} \underset{ \eqref{eq:egalbullet}}{=}  f^\newsearrow(\ell).\qedhere
\end{equation}
\end{proof}

\subsection{Scaling limits for the perimeter of a distinguished cycle}

\label{sec:distcycle}
In this section, we establish a scaling limit result for the perimeter of a distinguished cycle under $ \mathbb{P}^{(\ell)}$,  $\mathbb{P}^{(\ell)}_{\bullet}$ and  $\mathbb{P}^{(\ell)}_{\infty}$. We fix a peeling algorithm $ \mathcal{A}$ such that $ \mathcal{A}(\map) \neq \dagger$ if $ \map$ has at least one hole. Recall that $(S_{n})_{n \geq 0}$ denotes the random walk on $\Z$ with jump distribution $\nu$.

\paragraph{Infinite Boltzmann planar maps.} When performing a branching peeling exploration under $\mathbb{P}^{(\ell)}_{\infty}$, there is  one cycle that  plays a particular role, namely the one separating the origin from infinity (the infinite version of a  $ \mathbf{q}$-Boltzmann planar map has almost surely one end \cite[Lemma 6.3]{St14}).  Specifically, during a peeling exploration of an infinite planar map with one end, we define a family of distinguished cycles $  (\mathscr{C}_{\infty}(i))_{i \geq 0}$ as follows. The initial distinguished cycle $\mathscr{C}_{\infty}(0)$ is the only cycle of $ \mathfrak{e}_{0}$ and $ \sigma_{0}=0$.
Then, inductively, for $i \geq 0$, if $\mathscr{C}_{\infty}(i)=\dagger$ (the cemetery point), set $\mathscr{C}_{\infty}(i+1)=\dagger$, and otherwise define $\sigma_{i+1}= \inf \{ j >\sigma_{i} : \mathcal{A}(  \mathfrak{e}_{j}) \in \mathscr{C}_{\infty}(i) \}$ (with the usual convention $\inf \emptyset =\infty$). If $\sigma_{i+1}=\infty$, we define $\mathscr{C}_{\infty}(i+1)=\mathscr{C}_{\infty}(i)$. Otherwise, when peeling the edge $\mathcal{A}(  \mathfrak{e}_{\sigma_{i+1}})$, we define $\mathscr{C}_{\infty}(i+1)$ depending on what peeling event happens:
\begin{itemize}
\item If the event $\mathsf{C}_{k}$ occurs, we define  $\mathscr{C}_{\infty}(i+1)$ to be the new cycle thus created,
\item If the event $\mathsf{G}_{0,0}$ occurs (disappearance of the hole), we define $\mathscr{C}_{\infty}(i+1)=\dagger$,
\item If the event $ \mathsf{G}_{k_{1},k_{2}}$ occurs with $(k_{1},k_{2}) \neq (0,0)$, two new cycles are created (one possibly empty) when peeling the edge $\mathcal{A}(  \mathfrak{e}_{\sigma_{i+1}})$. We define $ \mathscr{C}_{\infty}(i+1)$ to be the cycle  that separates the origin of the map from infinity.
\end{itemize}
Observe that under $\mathbb{P}^{(\ell)}_{\infty}$, the event $ \mathsf{G}_{0,0}$ has probability $0$. Finally, we agree by convention that the perimeter of $\dagger$ is $0$, and for every $i \geq 0$ we let $P_{\infty}(i)$ be the half-perimeter of $\mathscr{C}_{\infty}(i)$.

It follows from Proposition \ref{prop:peelinggenerallaw} that, under  $\mathbb{P}^{(\ell)}_{\infty}$, the process $(P_{\infty}(i): i \geq 0)$ evolves as a Markov chain on the positive integers starting from $\ell$ with explicit transitions:
$${\mathbb{P}}^{(\ell)}_{\infty}( P_{\infty}(i+1)=k+m \mid  P_{\infty}(i)=m) = \nu(k) \cdot \frac{h^\uparrow(m+k)}{h^\uparrow(m)} \qquad (k \geq -m).$$
 This Markov chain  can be seen as the random walk $(S_{n})_{n \geq 0}$ starting from $\ell$ conditioned to remain positive, that is, rigorously, as the Doob $h^{\uparrow}$-
 transform of the random walk $(S_{n})_{n \geq 0}$ killed when entering $\Z_-$ (see \cite{Bud15},\cite[Sec.~1.2.2]{BCgrowth}).

\paragraph{Pointed Boltzmann planar maps.}

When performing a branching peeling exploration under ${\mathbb{P}}^{(\ell)}_{\bullet}$, there is  one cycle that  plays a particular role, namely the one separating the origin from $v_{\bullet}$.  Specifically,
we define a family of distinguished cycles $  (\mathscr{C}_{\bullet}(i))_{i \geq 0}$ exactly as in the case of infinite Boltzmann planar maps, with the only difference that if the event $ \mathsf{G}_{k_{1},k_{2}}$ occurs we define $ \mathscr{C}_{\bullet}(i+1)$ to be the cycle which is filled-in by the planar map containing the distinguished vertex $v_{\bullet}$. If the latter cycle is empty, i.e. when $v_\bullet$ is encountered, we set $\mathscr{C}_{\bullet}(i+1)=\dagger$. Finally, for every $i \geq 0$ we let $P_{\bullet}(i)$ be the half-perimeter of $\mathscr{C}_{\bullet}(i)$.

It follows from Proposition \ref{prop:peelinggenerallaw} that, under  ${\mathbb{P}}^{(\ell)}_{\bullet}$, the process $(P_{\bullet}(i): i \geq 0)$ evolves as a Markov chain on the non-negative integers starting from $\ell$ with the following explicit transitions:
$${\mathbb{P}}^{(\ell)}_{\bullet}( P_{\bullet}(i+1)=k+m \mid  P_{\bullet}(i)=m) = \nu(k) \cdot \frac{h^\newsearrow(m+k)}{h^\newsearrow(m)} \qquad (k \geq -m).$$
This Markov chain  can therefore be seen as the random walk $(S_{n})_{n \geq 0}$ starting from $\ell$, absorbed when entering $\Z_-$ and 
conditioned to be absorbed at $0$,  or, equivalently, as a Doob $h^\newsearrow$-transform of the random walk $(S_{n})_{n \geq 0}$ killed when entering $\Z_-$.

\paragraph{Boltzmann planar maps.}  During a peeling exploration under $\mathbb{P}^{(\ell)}$, we can still distinguish a natural family of cycles, namely the locally largest ones. Specifically,  we define a family of distinguished cycles $  (\mathscr{C}_{\ast}(i))_{i \geq 0}$ exactly as in the case of infinite Boltzmann planar maps, with the only difference that if the event $ \mathsf{G}_{k_{1},k_{2}}$ occurs with $(k_{1},k_{2}) \neq (0,0)$,  one creates two new cycles (one possibly empty), and we define  $ \mathscr{C}_{\ast}(i+1)$ to be the cycle with largest perimeter  (if $\ell_{1}=\ell_{2}$, we choose between the two in a deterministic way). Finally, for every $i \geq 0$ we let $P_\ast(i)$ be the half-perimeter of $\mathscr{C}_{\ast}(i)$.

Again, Proposition \ref{prop:peelinggenerallaw} shows that under  ${\mathbb{P}}^{(\ell)}$,  $(P_{\ast}(i): i \geq 0)$ is a Markov chain on the positive integers starting from $\ell$ with the following explicit transitions:
$${\mathbb{P}}^{(\ell)}( P_{*}(i+1)=k+m \mid  P_{*}(i)=m) = \nu(k) \cdot \frac{\gamma^{m+k}W^{(m+k)}}{\gamma^m W^{(m)}} \qquad (k > \frac{-m-1}{2} ),$$ and the transition is just half of the last display if $m-1$ is even and $-k = \frac{m+1}{2}$. However, we will not need the exact value of these transitions in the following.

\paragraph{Scaling limits for the perimeter of  the distinguished cycle.} We now introduce the different scaling limits of the three families of distinguished cycles we have just defined. They are all related to the growth-fragmentation $ (\mathbf{X}^{(-\theta)}_{\theta}(t), t \geq 0)$ with cumulant function $\kappa_{\theta}$ given by \eqref{eq:kappatheta} and self-similarity parameter $-\theta$.

Using the results of Sec.~\ref{sec:oneparameter}; we may construct $ \mathbf{X}_{\theta}^{(-\theta)}$ by choosing the evolution of the Eve cell to be the self-similar Markov process $X_{\theta}^{(-\theta)}$ with characteristics $(\Psi_{\theta},-\theta)$, where we recall that $\Psi_{\theta}$ is the Laplace exponent \eqref{eq:PsiTheta}. In particular, $X_{\theta}^{(-\theta)}$ does not make negative jumps larger than half of its current value and, roughly speaking, describes the evolution of the size of the locally largest particle.
Recalling the results of the last section, the processes $Y_{\theta}^{+}$ and $Y_{\theta}^{-}$ defined in Sec.~\ref{sec:plus} and Sec.~\ref{sec:area}  corresponding to the evolution of the tagged particles in the biased versions of $ \mathbf{X}_{\theta}^{(-\theta)}$ are distributed as follows: Let $\Upsilon_{\theta}$ be the $\theta$-stable L\'evy process with positivity parameter $\rho=\Pr{\Upsilon_{\theta}(1) \geq 0}$ satisfying $\theta(1-\rho)=1/2$ and normalized so that its L\'evy measure is $$ \frac{\Gamma(1+\theta)}{\pi} \cdot \cos((1+\theta)\pi) \cdot \frac{\d x}{x^{1+\theta}} \mathbbm{1}_{x>0}+  \frac{\Gamma(1+\theta)}{\pi} \cdot \frac{\d x}{|x|^{1+\theta}} \mathbbm{1}_{x<0}.$$
Then introduce $\Upsilon^\uparrow_{\theta}$ and $\Upsilon_{\theta}^\newsearrow$ the versions of  the L\'evy process $\Upsilon_{\theta}$  conditioned to stay positive and respectively to die continuously at $0$ when it enters $ \R_{-}$ (see \cite{Cha96,CC08} for the definition of these processes). Then by the result of the last section we have $Y_{\theta}^+ = \Upsilon_{\theta}^\uparrow$ and $Y_{\theta}^-= \Upsilon_{\theta}^\newsearrow$ in law.

\begin{proposition} \label{prop:scalingll} Assume that $ \mathbf{q}$ is admissible, critical, non-generic and satisfies $q_{k} \sim c \cdot \gamma^{k-1} \cdot k^{-1-\theta}$ as $k \rightarrow \infty$, with $\theta \in (1/2,3/2)$. Then, setting  $\mathsf{c}_{ \mathbf{q}}= \frac{\pi c}{ \Gamma(1+\theta) \cos( (1+\theta) \pi)}$ the following three convergences hold  in distribution for the Skorokhod $J_{1}$ topology on $\mathbb{D}(\R_{+},\R)$:
\begin{eqnarray*}
\textrm{under }  \mathbb{P}^{(\ell)}_{\infty}, \qquad  \left( \frac{1}{\ell} \cdot P_{\infty}(\lfloor \ell^{\theta} \cdot t \rfloor ) : t \geq 0\right)  &\displaystyle \mathop{\longrightarrow}^{(d)}_{\ell \rightarrow \infty} & \big( {Y}^{+}_{\theta}(  \mathsf{c}_{ \mathbf{q}} \cdot t) : t \geq 0\big);\\
\textrm{under }  \mathbb{P}^{(\ell)},   \qquad  \left( \frac{1}{\ell} \cdot P_{*}(\lfloor \ell^{\theta}\cdot t \rfloor ) : t \geq 0\right)  &\displaystyle \mathop{\longrightarrow}^{(d)}_{\ell \rightarrow \infty} & \big( {X}^{(-\theta)}_{\theta}(  \mathsf{c}_{ \mathbf{q}} \cdot t) : t \geq 0\big);\\
\textrm{under }  {\mathbb{P}}^{(\ell)}_{\bullet}, \qquad  \left( \frac{1}{\ell} \cdot P_{\bullet}(\lfloor \ell^{\theta}\cdot t \rfloor ) : t \geq 0\right)  &\displaystyle \mathop{\longrightarrow}^{(d)}_{\ell \rightarrow \infty} & \big( {Y}^{-}_{\theta}(  \mathsf{c}_{ \mathbf{q}} \cdot t) : t \geq 0\big).
\end{eqnarray*}
\end{proposition}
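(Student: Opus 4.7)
My plan is to recognise each of the three Markov chains $P_\infty$, $P_\bullet$, $P_\ast$ as a transformation of a single underlying random walk $(S_n)$ with step law $\nu$, and then leverage a suitable invariance principle for each. Combining \eqref{eq:defnu}, \eqref{eq:asymptoticqk} and the enumeration estimate \eqref{eq:asymptowl}, the tails of $\nu$ satisfy $\nu(k)\sim c\,k^{-\theta-1}$ and $\nu(-k)\sim (c/|\cos(\theta\pi)|)\,k^{-\theta-1}$ as $k\to+\infty$, so $\nu$ is centred by criticality and lies in the domain of normal attraction of a $\theta$-stable law with the asymmetry prescribed in the statement. Matching these tail constants with the Lévy measure of $\Upsilon_\theta$ fixes the normalisation $\mathsf{c}_{\mathbf{q}}=\pi c/(\Gamma(1+\theta)\cos((1+\theta)\pi))$ and yields the Skorokhod convergence
$$\big(\tfrac{1}{\ell}S_{\lfloor \ell^\theta t\rfloor}\big)_{t\geq 0}\ \stackrel{(d)}{\Longrightarrow}\ \big(\Upsilon_\theta(\mathsf{c}_{\mathbf{q}}\,t)\big)_{t\geq 0}$$
when the walk is started at $\ell$ (so $\Upsilon_\theta$ starts at $1$).

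The chains $P_\infty$ and $P_\bullet$ are, by the explicit transitions recalled in Sec.~\ref{sec:distcycle}, the Doob $h^\uparrow$- and $h^\newsearrow$-transforms of $(S_n)$ killed upon entering $\Z_-$, both started from $\ell$. Caravenna and Chaumont's invariance principle for random walks conditioned to stay positive, together with its counterpart for walks conditioned to hit $0$ continuously, lifts the above convergence of $S$ to the corresponding convergences of $P_\infty$ and $P_\bullet$ towards $\Upsilon_\theta$ conditioned to stay positive and to die continuously at $0$, respectively. By Sec.~\ref{sec:oneparameter} these two conditioned Lévy processes are precisely $Y_\theta^+$ and $Y_\theta^-$ through the Lamperti representation, which yields the first and third convergences of the statement.

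The chain $P_\ast$ under $\mathbb{P}^{(\ell)}$ is not a Doob transform of $(S_n)$: its transition from $m$ to $m+k$ is proportional to $\nu(k)\,\gamma^k W^{(m+k)}/W^{(m)}$ and is supported on $k>-(m+1)/2$ (with a half-weight at $k=-(m+1)/2$). Writing $z=(m+k)/m$, the restriction $z>1/2$ exactly mirrors the support $(1/2,\infty)$ of the multiplicative Lévy measure $\nu_\theta$ of Sec.~\ref{sec:oneparameter}, and \eqref{eq:asymptowl} gives $\gamma^k W^{(m+k)}/W^{(m)}\sim z^{-\theta-1}$; a direct computation of the rescaled jump intensity at position $x=m/\ell$ then yields $\mathsf{c}_{\mathbf{q}}\,x^{-\theta}\,\nu_\theta(\mathrm{d}z)$, which matches the Lévy system of the self-similar Markov process $X_\theta^{(-\theta)}$ with Laplace exponent $\Psi_\theta$ from \eqref{eq:PsiTheta}. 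I expect this case to be the main obstacle: since the transition kernel is not an $h$-transform, one cannot directly invoke an existing invariance principle, so one must either set up ad-hoc finite-dimensional convergence and tightness arguments controlling the cumulative contribution of near-halving jumps, or --- the route signalled in the introduction and relying on Sec.~\ref{sec:spine1} --- deduce the identification of the limit from that of $P_\infty$ via the spinal decomposition (Theorem~\ref{T2}) together with Theorem~\ref{thm:kappaphi}, which ensures that the law of the locally largest cell process $X$ is completely determined by the law of its $\mathcal{M}^+$-biased counterpart $Y^+$.
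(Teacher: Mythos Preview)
Your treatment of $P_\infty$ and $P_\bullet$ is exactly the paper's: both are Doob transforms of the $\nu$-walk, and the Caravenna--Chaumont invariance principle \cite{CC08} reduces everything to the unconditioned convergence $\ell^{-1}S_{\lfloor\ell^\theta\cdot\rfloor}\Rightarrow\Upsilon_\theta(\mathsf{c}_{\mathbf{q}}\cdot)$, which is quoted from \cite{BCgrowth,BCMCauchy}.

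For $P_\ast$ the paper does follow your route (b), but the mechanism is not Theorem~\ref{thm:kappaphi}, and your description leaves a genuine gap. Theorem~\ref{thm:kappaphi} only says that the law of $X_\theta^{(-\theta)}$ is \emph{determined} by that of $Y_\theta^+$; it gives no handle on the convergence of $P_\ast$ itself. The missing ingredient is an explicit \emph{discrete} absolute continuity relation that you can read off directly from the two transition kernels you wrote down: since $\gamma^m W^{(m)}=h^\uparrow(m)/f^\uparrow(m)$, one has for every path $x_0=\ell,x_1,\ldots,x_n$ with $x_{i+1}\geq \tfrac12 x_i$,
\[
\mathbb{P}^{(\ell)}\bigl(P_\ast(0)=x_0,\ldots,P_\ast(n)=x_n\bigr)
=\frac{f^\uparrow(\ell)}{f^\uparrow(x_n)}\,
\mathbb{P}^{(\ell)}_\infty\bigl(P_\infty(0)=x_0,\ldots,P_\infty(n)=x_n\bigr).
\]
Because $f^\uparrow(\ell)\sim c\,\ell^{\theta+3/2}$, this transfers the already-established first convergence: for any bounded continuous $F$ on $\mathbb{D}([0,T],\R)$ vanishing on paths that jump below half their current value,
\[
\mathbb{E}^{(\ell)}\Bigl[F\bigl(\ell^{-1}P_\ast(\lfloor\ell^\theta\,\cdot\rfloor)\bigr)\Bigr]
\ \longrightarrow\
\mathbb{E}\Bigl[Y_\theta^+(\mathsf{c}_{\mathbf{q}}T)^{-(\theta+3/2)}\,F\bigl(Y_\theta^+(\mathsf{c}_{\mathbf{q}}\,\cdot)\bigr)\Bigr].
\]
The right-hand side is then identified with $\mathbb{E}\bigl[F(X_\theta^{(-\theta)}(\mathsf{c}_{\mathbf{q}}\,\cdot))\bigr]$ via Proposition~\ref{P2} (with $\omega_+=\theta+3/2$) and Theorem~\ref{T2}: under $\widehat{\mathcal P}^+_1$ the tagged cell is $Y_\theta^+$, and undoing the bias by $\hat{\mathcal X}(T)^{-\omega_+}$ turns the expectation into a sum over all cells; the constraint ``no jump below half'' then singles out the Eve cell (the locally largest one), collapsing the sum to a single term. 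Your route (a) via direct tightness would also work---it is essentially the approach of \cite{BK14}, which the paper explicitly mentions as an alternative it chose not to take.
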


\begin{proof}We have seen that  the process $(P_{\bullet}(i): i \geq 0)$ (resp.~$(P_{\infty}(i): i \geq 0)$) evolves as the random walk $(S_{n})_{n \geq 0}$ starting from $\ell$ and  conditioned to be absorbed at $0$ before touching $ \mathbb{Z}_{-}$ (resp.~conditionned to stay positive) or, equivalently as the Doob $h^\newsearrow$-transform (resp.~ $h^{\uparrow}$) of the random walk $(S_{n})_{n \geq 0}$ absorbed when entering $\Z_-$. The first and third convergences follow from the general invariance principle proved \cite[Theorem 1.1 and 1.3]{CC08} once we know that 
$$ \frac{1}{\ell} S_{\ell^{\theta}} \xrightarrow[\ell\to\infty]{(d)} \Upsilon_{\theta}(1).$$  
The last display is proved in the case $\theta \ne 1$ in \cite[Proposition 3.2]{BCgrowth} but with another constant due to the different normalization of the L\'evy measure. The more delicate Cauchy case $\theta=1$ is \cite[Proposition 2]{BCMCauchy}.

For the second assertion, instead of adapting the proof of \cite[Proposition 9]{BCK} and applying results of \cite{BK14}, we shall transfer a convergence in distribution under $\mathbb{P}^{(\ell)}_{\infty}$ to a convergence under $\mathbb{P}^{(\ell)}$ by using absolute continuity relations in both the discrete setting (relying on Sec.~\ref{sec:peeling}) and the continuous setting (relying on Prop.~\ref{P2}). Specifically, Proposition \ref{prop:peelinggenerallaw} shows that for every $n\geq 0$ and every sequence $x_0=\ell, x_1, \ldots, x_n$ in $\{1, 2, \ldots\}$ with $x_{i+1}\geq \frac{1}{2}x_i$ for all $i=0, \ldots, n-1$, there is the identity
$$\P^{(\ell)}\left( P_\ast(0)=x_0, \ldots, P_\ast(n)=x_n\right)=\frac{f^{\uparrow}({\ell)}}{f^{\uparrow}(x_n)}
\P^{(\ell)}_{\infty}\left( P_{\infty}(0)=x_0, \ldots,P_{\infty}(n)=x_n\right).$$
As a consequence, if we fix $T>0$ and let $F : \mathbb{D}([0,T],\R) \rightarrow \R_{+}$ be a bounded continuous function such that $F(X)=0$ if $X \in  \mathbb{D}([0,T],\R) $ is such that there exists  $0<t \leq T$ with $X(t)<X(t-)/2$, we get that
$$
\E^{(\ell)} \left( F \left( \frac{1}{\ell} P_\ast(\lfloor \ell^{\theta}t \rfloor ) :  0 \leq t \leq T\right)   \right)   \quad \mathop{\longrightarrow}_{\ell \rightarrow \infty} \quad \E\left( {Y}^{+}_{\theta}(  \mathsf{c}_{ \mathbf{q}} \cdot T)^{-(\theta+3/2)} F \left( Y^{+}_{\theta}(  \mathsf{c}_{ \mathbf{q}} \cdot t) : 0 \leq t \leq T \right) \right),$$
where we have also used the estimate  \eqref{eq:asymph} concerning the asymptotic behavior of $f^{\uparrow}({\ell)}$. 

Assume that the growth-fragmentation  $ (\mathbf{X}^{(-\theta)}_{\theta}(t), t \geq 0)$ is constructed by using the self-similar Markov process  $X^{(-\theta)}_{\theta}$ for the evolution of the Eve cell. Recall from Sec.~\ref{sec:spine1} the notation $\widehat{\mathcal P}^+_1$  describing the joint distribution of a cell system and a leaf,  and that $\hat{\mathcal X}(t)$ is the size of the tagged cell at time $t$.  In particular, we have $\omega_{+}=\theta+3/2$ (recall \eqref{eq:cramertheta}). In addition, by Theorem \ref{T2},  $\hat{\mathcal X}$ under $\widehat{\mathcal P}^+_1$ has the same distribution as $Y^{+}_{\theta}$.  Therefore, setting $ \mathbf{X}^{(-\theta)}_{\theta}(t)=\{X_{1}(t),X_{2}(t), \ldots\}$,
\begin{eqnarray*}
 \E\left( {Y}^{+}_{\theta}(  T)^{-(\theta+3/2)} F \left( Y^{+}_{\theta}(   t) : 0 \leq t \leq T \right) \right)  &= & \widehat{\mathcal E}^+_1 \left(  \hat{\mathcal X}(T)^{-\omega_{+}} F \left( \hat{\mathcal X}(t): 0 \leq t \leq T\right) \right)\\
 &\underset{(\textrm{Prop.}~\ref{P2})}{=}&   \mathbb{E}_{1} \left( \sum_{i=1}^{\infty}  F \left(X_{i}(t): 0 \leq t \leq T\right) \right)  \\
 &=& {E} \left( F(X^{(-\theta)}_{\theta}(t) : 0 \leq t \leq T)  \right).
\end{eqnarray*}
For the last equality, we have used the fact that  if $X_{i}(T)>0$ and $X_{i}(t) \geq X_{i}(t-)/2$ for every $0<t <T$, then $X_{i}$ is the Eve cell (it is the locally largest particle) and $X_{i}(t)=X^{(-\theta)}_{\theta}(t)$ for every $0 \leq t \leq T$.  We conclude that
$$\E^{(\ell)} \left( F \left( \frac{1}{\ell} P_\ast(\lfloor \ell^{\theta}t \rfloor ) :  0 \leq t \leq T\right)   \right)   \quad \mathop{\longrightarrow}_{\ell \rightarrow \infty} \quad  \mathbb{E}\left( F(X^{(-\theta)}_{\theta}(\mathsf{c}_{ \mathbf{q}} \cdot t) : 0 \leq t \leq T)  \right).$$
This shows the second convergence and completes the proof.
\end{proof}

As an application of Proposition \ref{prop:scalingll}, we compute the law of the intrinsic area of the growth-fragmentation $ \mathbf{X}^{(-\theta)}_{\theta}$ (recall from Remark \ref{rem:lawM} that this law only depends on the cumulant function $\kappa_{\theta}$). For $\beta \in (0,1)$, let $ \mathfrak{S}_\bullet(\beta)$ be a positive $\beta$-stable random variable with Laplace transform $\mathbb{E} (e^{-\lambda \mathfrak{S}_\bullet(\beta)}) = \exp\left( - (\Gamma(1+1/\beta) \lambda)^{\beta}\right)$. Then $ \mathbb{E} (1/\mathfrak{S}_\bullet(\beta)) = 1$ and we can define a random variable $\mathfrak{S}(\beta)$ by biasing $\mathfrak{S}_{\bullet}(\beta)$ by $x\to 1/x$, that is for any $f \geq 0$
$$ \mathbb{E}(f(\mathfrak{S}(\beta))) =  \mathbb{E}\left(f( \mathfrak{S}_{\bullet}(\beta)) \frac{1}{ \mathfrak{S}_{\bullet}(\beta)}\right).$$

\begin{corollary}\label{cor:area}Fix $\theta \in (1/2,3/2]$. Denote by $ \mathcal{M}^-_{\theta}(\infty)$  the intrinsic area of a growth-fragmentation with cumulant function $\kappa_{\theta}$. Then
$$ \mathcal{M}^-_{\theta}(\infty)   \quad \mathop{=}^{(d)} \quad  \mathfrak{S} \left(  \frac{1}{\theta+ \frac{1}{2}} \right) .$$
\end{corollary}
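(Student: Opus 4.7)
The plan is to match two descriptions of the scaling limit of $|\mathfrak{m}|/\ell^{\theta+1/2}$ under the pointed Boltzmann measure $\mathbb{P}^{(\ell)}_\bullet$ as $\ell\to\infty$: one obtained by singularity analysis of a generating function, and one obtained through the area martingale of Proposition~\ref{prop:martingales} and the spinal decomposition of Theorem~\ref{T9}. Remark~\ref{rem:lawM} ensures that the law of $\mathcal{M}^-_\theta(\infty)$ depends only on $\kappa_\theta$, so using the self-similarity parameter $-\theta$ in the scaling-limit argument is legitimate for identifying this law.

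For the analytic side, I would start from the identity $\mathbb{E}^{(\ell)}_\bullet(g^{|\mathfrak{m}|}) = g \cdot x(g)^\ell$ obtained in the proof of Proposition~\ref{prop:equivtaille}, and exploit the expansion $F(x) = 1 - A(1-x)^{\theta+1/2} + o((1-x)^{\theta+1/2})$ near $x=1$ that follows from $q_k \sim c\gamma^{k-1}k^{-\theta-1}$. Singularity inversion gives $x(g) = 1 - A^{-\beta}(1-g)^\beta + o((1-g)^\beta)$ with $\beta = 1/(\theta+1/2)$, and substituting $g = \exp(-\lambda/\ell^{\theta+1/2})$ yields
$$\mathbb{E}^{(\ell)}_\bullet \exp\!\left(-\lambda\,|\mathfrak{m}|/\ell^{\theta+1/2}\right) \;\longrightarrow\; \exp\!\left(-A^{-\beta}\lambda^\beta\right).$$
Matching with the normalisation $\mathbb{E}\exp(-\lambda\mathfrak{S}_\bullet(\beta)) = \exp(-(\Gamma(1+1/\beta)\lambda)^\beta)$ this identifies the scaling limit of $|\mathfrak{m}|/\ell^{\theta+1/2}$ under $\mathbb{P}^{(\ell)}_\bullet$ as a (possibly rescaled) copy of $\mathfrak{S}_\bullet(\beta)$.

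For the probabilistic side, I would use the closed area martingale $|\mathfrak{e}_n| + f^\newsearrow(\boldsymbol{\ell}(n))$ of Proposition~\ref{prop:martingales}, whose terminal value is $|\mathfrak{m}|$. Combining a functional strengthening of Proposition~\ref{prop:scalingll} that controls the \emph{whole} multiset of perimeters, the asymptotic $f^\newsearrow(\ell) \sim c'\ell^{\omega_-}$ with $\omega_- = \theta+1/2$, and Theorem~\ref{T3}(i), the rescaled martingale at time $\lfloor \ell^\theta t\rfloor$ converges in distribution under $\mathbb{P}^{(\ell)}$ to $c'\,\mathcal{E}_1(\mathcal{M}^-_\theta(\infty) \mid \bar{\mathcal{F}}_{\mathsf{c}_{\mathbf{q}} t})$. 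Letting $t\to\infty$ and using that the martingale is closed by $|\mathfrak{m}|$ produces $|\mathfrak{m}|/\ell^{\omega_-} \to c'\,\mathcal{M}^-_\theta(\infty)$ in distribution under $\mathbb{P}^{(\ell)}$. Since $\mathbb{P}^{(\ell)}_\bullet$ is $\mathbb{P}^{(\ell)}$ biased by $|\mathfrak{m}|$, and Theorem~\ref{T9} identifies this biasing in the scaling limit with the Malthusian tilting $\widehat{\mathcal{P}}^-_1$ of the continuum growth-fragmentation, the corresponding limit under $\mathbb{P}^{(\ell)}_\bullet$ is the size-biased version of $c'\,\mathcal{M}^-_\theta(\infty)$. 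Matching this with the analytic side gives $\mathcal{M}^-_\theta(\infty) \stackrel{(d)}{=} \mathfrak{S}(\beta)$ up to a multiplicative constant, and a verification of all normalisation constants (built into the specific choice of $\kappa_\theta$ and of $\mathsf{c}_{\mathbf{q}}$) shows this constant is $1$.

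The main obstacle is the functional strengthening of Proposition~\ref{prop:scalingll} from the distinguished cycle alone to the entire family of perimeters, in a topology fine enough to pass to the limit in the sum $\sum_i X_i^{\omega_-}$. This requires tightness in an $\ell^q$-type space with $q$ near $\omega_-$, which is sensitive to many small cycles. The spinal decomposition of Theorem~\ref{T9}, accessed through the pointed measure $\mathbb{P}^{(\ell)}_\bullet$ and Proposition~\ref{prop:scalingll} in its biased form, is the key device that circumvents a direct description of the full-particle scaling limit by reducing everything to the single tagged cell whose continuum counterpart $Y^-_\theta$ has been identified explicitly in Proposition~\ref{prop:hyper}.
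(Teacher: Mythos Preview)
Your strategy differs from the paper's, and in its current form it contains a genuine gap.

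The step you flag as the ``main obstacle'' is in fact not supplied: to pass from the single-cell scaling limit of Proposition~\ref{prop:scalingll} to the convergence of the full area martingale $|\mathfrak{e}_n| + f^\newsearrow(\boldsymbol{\ell}(n))$ you need to control the entire multiset of cycle perimeters in an $\ell^{\omega_-}$-type topology. This is essentially the content of Theorem~\ref{thm:GF}, whose proof is long and, more to the point, is only carried out for the dilute phase $\theta\in(1,3/2)$. Your argument therefore both relies on a result at least as hard as the one you are trying to prove, and does not cover the full range $\theta\in(1/2,3/2]$ claimed in the corollary. The spinal decomposition of Theorem~\ref{T9} does not rescue this: it identifies the continuum tilting once the multi-particle scaling limit is in hand, but it does not by itself furnish the tightness needed to take the limit in $\sum_i X_i^{\omega_-}$.

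The paper's proof avoids this difficulty entirely by a fixed-point argument. It takes as input only the convergence $|B^{(\ell)}|/\ell^{\theta+1/2}\to \mathsf{b}_{\mathbf{q}}\,\mathfrak{S}$ under the \emph{unpointed} law $\mathbb{P}^{(\ell)}$ (imported from \cite{BCgrowth}) together with the \emph{single-cell} convergence of Proposition~\ref{prop:scalingll} for the locally largest cycle. Freezing all holes except the locally largest one yields the one-step decomposition $|B^{(\ell)}|=V^{(\ell)}+\sum_i |B_i^{(\delta_i^{(\ell)})}|$; passing to the limit gives only a stochastic inequality $\mathfrak{S}\geq_{\mathrm{sto}}\sum_i \Delta_i^{\omega_-}\mathfrak{S}_i$, but since both sides have mean $1$ (using $\sum_i\mathbb{E}(\Delta_i^{\omega_-})=1$), it is actually an equality in distribution. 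Thus $\mathfrak{S}$ solves the recursive distributional equation~\eqref{eq:pointfixevolume}, whose solution with mean $1$ is unique, and hence $\mathfrak{S}=\mathcal{M}^-_\theta(\infty)$. This route needs no multi-particle tightness, no pointed measure, and no Theorem~\ref{T9}; it works uniformly for $\theta\in(1/2,3/2]$.
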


\begin{proof}Fix $\theta \in (1/2,3/2)$ and let $ \mathbf{q}$ be an admissible, critical, non-generic sequence  with polynomial tail of exponent $-\theta-1$ (e.g.~the one given by given by \eqref{eq:explicite}).  To simplify notation, we write $\mathfrak{S}$ for the random variable $\mathfrak{S}\big(( \theta+1/2)^{-1}\big)$ and we let $B^{(\ell)}$ be a random $\mathbf{q}$-Boltzmann planar map with perimeter $2 \ell$ distributed according to $\P^{(\ell)}$. The key is to use the fact that the area of $B^{(\ell)}$, appropriately rescaled, converges in distribution to $\mathfrak{S}$ as $\ell \rightarrow \infty$.  More precisely, it is proved in \cite[Proposition 4]{BCgrowth} that we have the convergence in distribution 
\begin{equation}\label{eq:volumeconv}
\frac{1}{\ell^{\theta+1/2}} \cdot | B^{(\ell)}|  \quad \mathop{\longrightarrow}^{(d)}_{\ell \rightarrow \infty} \quad   \mathsf{b}_{ \mathbf{q}}\cdot \mathfrak{S}  \qquad \textrm{with} \quad     \mathsf{b}_{ \mathbf{q}}= \frac{2 \gamma \cos(\pi (1+\theta))}{c \sqrt{\pi}}.
\end{equation}
Now imagine that we start a branching peeling exploration of $B^{(\ell)}$ which only peels along the locally largest hole until it stops (that is, the holes which are not the locally largest one are frozen and never explored afterwards, see the proof of Lemma 13 in \cite{BCK} for a precise definition). If we denote by $V^{(\ell)}$ the number of internal vertices revealed during this exploration and if $(\delta_{i}^{(\ell)})_{i \geq 1}$ are the half-lengths of the holes which are frozen, ranked in decreasing order, then by Proposition \ref{prop:peelinggenerallaw} and Proposition \ref{prop:martingales} we have
$$ |B^{(\ell)}| = V^{(\ell)} + \sum_{i \geq 1} |B^{(\delta_{i}^{(\ell)})}_{i}|,$$ where for every $i \geq 1$, the variable  $|B^{(\delta_{i}^{(\ell)})}_{i}|$ has the law of the area of a $ \mathbf{q}$-Boltzmann map of perimeter $2 \delta_{i}^{(\ell)}$ and are independent conditionally on the exploration so far. Denote by $(\Delta_{1}, \Delta_{2}, \ldots)$  the absolute values of the negative jumps of $t \mapsto  {X}_{\theta}^{(-\theta)}(  \mathsf{c}_{ \mathbf{q}} t )$ ranked in decreasing order. Using the fact that by Proposition \ref{prop:scalingll}, $ \ell^{-1} (\delta_{1}^{(\ell)},\delta_{2}^{(\ell)},\ldots)$ converges in distribution to $(\Delta_{1}, \Delta_{2}, \ldots)$, using \eqref{eq:volumeconv} it follows that we have the following  inequality for the stochastic order
\begin{equation}
\label{eq:stoineg} \mathfrak{S} \quad \underset{ \mathrm{sto}}{\geq} \quad  \sum_{ i \geq 0} \Delta_{i}^{\theta+1/2} \mathfrak{S}_{i},
\end{equation}
where $(\mathfrak{S}_{i} : i \geq 1)$ are i.i.d.~copies of $\mathfrak{S}$. By \eqref{eq:pointfixevolume} we also have $\sum_{ i \geq 0} \E(\Delta_{i}^{\theta+1/2})=1$  (recall that in our setting $\omega_{-} = \theta+1/2$), it follows that both sides  of \eqref{eq:stoineg} have the same expectation, so that  \eqref{eq:stoineg} is actually an equality in distribution. Hence $\mathfrak{S}$ is a fixed point of the recursive distributive equation \eqref{eq:pointfixevolume}, which, as we have already seen, has  a unique solution with given mean. Since $\mathfrak{S}$ has mean $1$, the conclusion follows. The case $ \theta = \frac{3}{2}$ is established in a similar way,  by using \cite[Proposition 9]{CLGpeeling} and \cite[Proposition 9]{BCK}, instead of respectively \cite[Proposition 4]{BCgrowth} and Proposition \ref{prop:scalingll}.
\end{proof} 

\subsection{Slicing at heights large planar maps with high degrees}

In this section, we  give a more geometric flavor to our connection between growth-fragmentation processes and planar maps. We show that the scaling limit of the perimeters of the cycles obtained by slicing $B^{(\ell)}$ at fixed heights is a time-changed version of the growth-fragmentation appearing in the previous section. Unlike Proposition \ref{prop:scalingll} which is valid for every $\theta \in (1/2,3/2)$, the results in this section only hold for the so-called dilute phase, where $\theta \in (1,3/2)$.  Before stating the main result, we introduce some notation.

If $\map$ is a (bipartite) planar map, recall that $\map^\dagger$ stands for the dual map of $\map$. If $f$ is a face of $\map$, its height  is by definition the dual graph distance $ \mathrm{d}^\dagger_{ \mathrm{gr}}(f,\rootface)$ in $ \mathfrak{m}^{\dagger}$ between $f$ and the root face $\rootface$. For $r \geq 0$, we let
$$ \mathrm{Ball}_{r}^\dagger( \map)$$
Then $\mathrm{Ball}_{r}^\dagger( \map)$ is a submap of $\map$ with possibly several holes. We denote by 
 $$ {{\bf L}}(r) \coloneqq \left(L_1(r),   L_2(r), \ldots\right),$$ the half-lengths of the cycles of $  \mathrm{Ball}^\dagger_{r}( \map)$ ranked in decreasing order. An example is shown in Fig.\,\ref{fig:slicing}.
 
 \begin{figure}[!h]
  \begin{center}
  \includegraphics[width=0.95\linewidth]{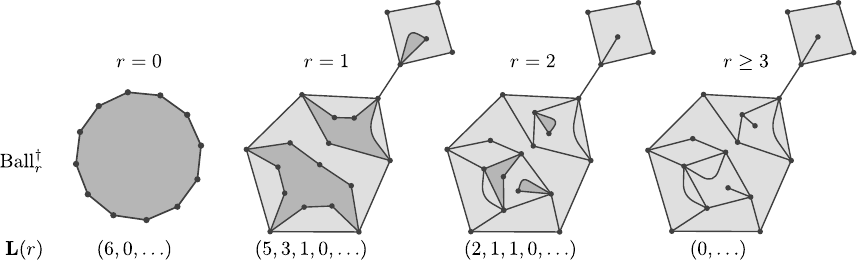}
  \caption{Example of the slicing at heights for a finite planar map (shown on the far right) with root face taken to be the outer face. The dark shaded faces correspond to the holes.\label{fig:slicing}}
  \end{center}
  \end{figure}
 
Let $ \mathbf{X}_{\theta}^{(1-\theta)}$ be the growth-fragmentation process with characteristics $(\kappa_{\theta}, 1-\theta)$: it can be constructed from the process $X_{\theta}^{(1-\theta)}$ with characteristics $(\Psi_{\theta}, 1-\theta)$ for the evolution of the Eve cell. Note from, e.g., Proposition 13.5 in \cite{Kyp}, that the self-similar Markov processes ${X}^{(1-\theta)}_{\theta}$ and ${X}^{(-\theta)}_{\theta}$ (which was introduced just before Proposition \ref{prop:scalingll}) are related by the following time-change relation:
$$X^{(1-\theta)}_{\theta}(t)=X^{(-\theta)}_{\theta}\left(\int_0^t\frac{\d s }{{X}^{(1-\theta)}_{\theta}(s)}\right), \qquad t\geq 0.$$ Recall also from Sec.~\ref{sec:spine1} and Sec.~\ref{sec:area} the notation $ \mathbb{P}^{+}_{1}$ and $\mathbb{P}^{-}_{1}$ for the laws of the biased versions of the growth-fragmentation, and  that $\mathsf{b}_{ \mathbf{q}}= \frac{2 \gamma \cos(\pi (1+\theta))}{c \sqrt{\pi}}$, $\mathsf{c}_{ \mathbf{q}} = \frac{\pi c}{ \Gamma(1+\theta) \cos((1+\theta)\pi)}$. Finally, set
$$\mathsf{a}_{\mathbf{q}}= \frac{1}{2} \left( 1+ \sum_{k=0}^{\infty}(2k+1) \nu(k) \right)$$
and
$$\ell^{\downarrow}_{\theta+3/2} \coloneqq  \left\{{\bf x}=(x_i)_{i\in\N}: x_1\geq x_2\geq \cdots \geq 0 \hbox{ and } \sum_{i=1}^{\infty} x_i^{\theta+3/2}<\infty\right\}.$$

 \begin{theorem}[Slicing at heights]\label{thm:GF} Let $ \mathbf{q}=( q_{k})_{k \geq 1}$ be an admissible critical and non-generic weight sequence satisfying $q_{k} \sim c \gamma^{k-1} k^{-1-\theta}$ as $k \rightarrow \infty$, for $\theta \in (1,3/2)$. Then the following three  convergences hold in distribution
 \begin{eqnarray}
\textrm{under }  \mathbb{P}^{(\ell)}_{\infty}, \qquad   \left(  \frac{1}{\ell} \cdot \mathbf{L}( \ell^{\theta-1} \cdot  t) : t \geq 0  \right)    & \displaystyle\mathop{\longrightarrow}^{(d)}_{\ell \rightarrow \infty} & \left( {\bf X}^{(1-\theta)}_{\theta} \left(  \frac{\mathsf{c}_{\mathbf{q}}}{\mathsf{a}_{\mathbf{q}}} \cdot t \right) : t \geq 0 \right)    \textrm{ under } \mathbb{P}^{+}_{1}\notag\\
\textrm{under }  \mathbb{P}^{(\ell)},   \qquad \left(    \frac{1}{\ell} \cdot    \mathbf{L}( \ell^{\theta-1}\cdot t  )  : t \geq 0     \right)  & \displaystyle\mathop{\longrightarrow}^{(d)}_{\ell \rightarrow \infty} & \left( {\bf X}^{(1-\theta)}_{\theta} \left(  \frac{\mathsf{c}_{\mathbf{q}}}{\mathsf{a}_{\mathbf{q}}} \cdot t \right) : t \geq 0\right)  \textrm{ under } \mathbb{P}_{1} \label{eq:cv2}\\
\textrm{under }  \mathbb{P}^{(\ell)}_{\bullet}, \qquad  \left(   \frac{1}{\ell} \cdot  \mathbf{L}( \ell^{\theta-1} \cdot t  )  : t \geq 0  \right)   & \displaystyle\mathop{\longrightarrow}^{(d)}_{\ell \rightarrow \infty} & \left( {\bf X}^{(1-\theta)}_{\theta} \left(  \frac{\mathsf{c}_{\mathbf{q}}}{\mathsf{a}_{\mathbf{q}}} \cdot t \right)  : t \geq 0 \right)  \textrm{ under } \mathbb{P}^{-}_{1}\notag
 \end{eqnarray}
 in the space of c\`adl\`ag process taking values in $\ell^{\downarrow}_{\theta+3/2}$ equipped with the Skorokhod $J_{1}$ topology. 
 \end{theorem}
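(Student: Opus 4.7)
\textbf{Plan for Theorem \ref{thm:GF}.} The strategy is to combine the peeling-step convergence established in Proposition~\ref{prop:scalingll} with a time-change argument converting peeling steps into dual graph distance. First I would fix a peeling algorithm $\mathcal{A}^\dagger$ that reveals the dual ball $\mathrm{Ball}^\dagger_r(\mathfrak{m})$ layer by layer, by always peeling an edge on a cycle whose opposite side is at smallest current dual height (with some deterministic tie-breaking). Such an algorithm is the natural branching extension of the peeling-by-layers of \cite{Bud15}, and it admits a well-defined (random) peeling time $N(r)$ at which $\mathfrak{e}_{N(r)}$ is exactly $\mathrm{Ball}^\dagger_r(\mathfrak{m})$. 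From that point the problem splits into two asymptotic statements: a convergence of the full cycle process as a function of peeling steps, and a law of large numbers for the time change $r\mapsto N(r)$.

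The first ingredient is to upgrade Proposition~\ref{prop:scalingll} from the single distinguished cycle to joint convergence of the \emph{entire} family of hole perimeters:
\begin{equation*}
\Big(\ell^{-1}\,\boldsymbol{\ell}(\lfloor \ell^\theta t\rfloor)\Big)_{t\geq 0} \;\xrightarrow[\ell\to\infty]{(d)}\; \Big(\mathbf{X}^{(-\theta)}_\theta(\mathsf{c}_{\mathbf{q}} t)\Big)_{t\geq 0}
\end{equation*}
in the Skorokhod space on $\ell^{\downarrow}_{\theta+3/2}$, and similarly under $\mathbb{P}^{(\ell)}_\infty$ and $\mathbb{P}^{(\ell)}_\bullet$ with the tilted limits $\mathbb{P}^+_1$ and $\mathbb{P}^-_1$. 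The existing single-cell convergences, together with the discrete branching property of Proposition~\ref{prop:peelinggenerallaw}(i) and the cell-system construction of Sec.~\ref{sec:cellsystem}--\ref{sec:ssgf}, allow the convergence of finitely many generations to be obtained by induction: once the locally largest cell converges to the Eve cell $X^{(-\theta)}_\theta$, its negative jumps converge to those of the Eve cell and each newborn Boltzmann sub-map of half-perimeter $\ell_i$ is, conditionally, a rescaled copy of the same model.

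The second ingredient is a law-of-large-numbers time change. I would prove that, in rescaled variables,
\begin{equation*}
\frac{N(\ell^{\theta-1} u)}{\ell^\theta} \;-\; \mathsf{a}_{\mathbf{q}} \int_0^{u} \sum_{i\geq 1}\ell^{-1}\ell_i\bigl(N(\ell^{\theta-1} v)\bigr)\, \d v \;\xrightarrow[\ell\to\infty]{\mathbb{P}} 0,
\end{equation*}
expressing that one unit of dual height costs a number of peeling steps proportional to the current total frontier length, with the constant $\mathsf{a}_{\mathbf{q}}$ being the expected ``vertical displacement'' per step in the peeling-by-layers analysis of \cite{Bud15}. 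Combined with the first step, this identifies the scaling-limit time change as $t = \mathsf{a}_{\mathbf{q}}\int_0^{u} X(v)\,\d v$. By the Lamperti time-substitution recalled right before the theorem statement, time-changing $X^{(-\theta)}_\theta$ by $\int_0^t X^{(-\theta)}_\theta(s)^{-1}\,\d s$ precisely produces $X^{(1-\theta)}_\theta$, and since in the cell-system construction each daughter is driven by an independent copy of the same self-similar Markov process, this transformation lifts to the global growth-fragmentation, with the overall prefactor $\mathsf{c}_{\mathbf{q}}/\mathsf{a}_{\mathbf{q}}$ coming from combining the two constants. The three variants follow by observing that the discrete martingales of Proposition~\ref{prop:martingales} (cycle and area martingales) converge to the continuous $M^+$ and $M^-$ of Sec.~\ref{sec:temporal}, and Proposition~\ref{prop:peelinggenerallaw} realises them as the Radon--Nikodym derivatives of $\mathbb{P}^{(\ell)}_\infty$ and $\mathbb{P}^{(\ell)}_\bullet$ with respect to $\mathbb{P}^{(\ell)}$.

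The main obstacle is the joint convergence in $\ell^{\downarrow}_{\theta+3/2}$: while the convergence of any finite number of large cycles is accessible through the branching structure, controlling the $(\theta+3/2)$-tail of small cycles demands uniform moment bounds. The dilute assumption $\theta\in(1,3/2)$ is essential here: it ensures that the peeling time to reach any fixed height stays of order $\ell^{\theta}$ rather than being accumulated from many short-lived small cycles, and it gives $\omega_+=\theta+3/2$ with $\omega_+/\omega_->1$, which is exactly what is needed to exploit Lemma~\ref{L5'} and the martingale $\mathcal{M}^+$ to bound $\sum_i L_i(r)^{\theta+3/2}$ uniformly. This tightness step, via the many-to-one formula of Theorem~\ref{T1} applied at the rescaled level, is expected to be the hardest technical ingredient.
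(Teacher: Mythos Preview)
Your overall architecture---layer-peeling algorithm, single-cycle input from Proposition~\ref{prop:scalingll}, induction over generations via the branching property, and martingale-based tail control---is close in spirit to the paper's, but there is a genuine gap in the time-change step. The claimed intermediate convergence $\ell^{-1}\boldsymbol{\ell}(\lfloor \ell^\theta t\rfloor)\Rightarrow \mathbf{X}^{(-\theta)}_\theta(\mathsf{c}_{\mathbf{q}} t)$ under the layer-peeling algorithm $\mathcal{A}^\dagger$ is not correct as stated: with $\mathcal{A}^\dagger$ the cycles are synchronized by \emph{height}, so in global peeling-step time each cycle's own clock runs at rate $L_i/\sum_j L_j$, not at rate $1$. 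The resulting scaling limit in global peeling-step time is therefore not the growth-fragmentation $\mathbf{X}^{(-\theta)}_\theta$ (in which all cells share a common time $t$). Correspondingly, a single global time change by $\int_0^u \sum_i X_i(v)\,\d v$ does not convert $\mathbf{X}^{(-\theta)}_\theta$ into $\mathbf{X}^{(1-\theta)}_\theta$: the Lamperti substitution relating the self-similarity indices $-\theta$ and $1-\theta$ is applied \emph{cell by cell}, not to the whole configuration at once.

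The paper avoids this by reversing the order of operations. It first freezes all cycles except the locally largest one and proves the single-cycle convergence \emph{directly in the height parametrization}, namely $\ell^{-1}\tilde P_*(\lfloor \ell^{\theta-1}t\rfloor)\Rightarrow X^{(1-\theta)}_\theta(\mathsf{c}_{\mathbf{q}} t/\mathsf{a}_{\mathbf{q}})$, importing the time-change analysis from \cite{BCgrowth} (playing the role that \cite{CLGpeeling} played in \cite{BCK}). Only afterwards is the branching structure invoked, via an $\varepsilon$-cutoff exploration that freezes every cycle whose half-perimeter drops below $\varepsilon\ell$; since only a tight number of large cycles are followed, each can be handled by the single-cycle result and the Markov property, yielding convergence to the $\varepsilon$-cutoff of $\mathbf{X}^{(1-\theta)}_\theta$. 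The $\ell^{\downarrow}_{\theta+3/2}$-control of the frozen dust is then carried out separately on the discrete and continuous sides (as in \cite[Proposition~15 and Lemma~22]{BCK}). For $\mathbb{P}^{(\ell)}_\infty$ and $\mathbb{P}^{(\ell)}_\bullet$ the distinguished cycle is replaced by $\mathscr{C}_\infty$ or $\mathscr{C}_\bullet$, and the tail control is transferred from the $\mathbb{P}^{(\ell)}$ case by absolute continuity rather than redone. Your remarks on why the dilute regime $\theta>1$ and the exponent $\omega_+=\theta+3/2$ matter are on target, but the route through a global $\mathbf{X}^{(-\theta)}_\theta$ limit followed by a global time change should be replaced by the ``single cycle in height time first, then cutoff branching'' scheme.
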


Let us first explain why we need to restrict to the case $\theta \in (1,3/2)$ for this theorem. Let $B^{(\ell),\dagger}$ be the dual of a random map distributed according to $ \mathbb{P}^{(\ell)}$. The second convergence above as well as the results of \cite{BCgrowth} suggest that the typical distances in $ B^{(\ell), \dagger}$ are of order $ \ell^{\theta-1}$ when $\theta \in (1, 3/2)$. However, in the dense case $\theta \in (1/2,1)$ the results of \cite{BCgrowth} suggest that this typical distance should be of order $\log(\ell)$. This logarithmic scaling prevents us from expecting a non-trivial self-similar object in the limit and this is why we restrict our attention to the dilute case $\theta \in (1,3/2)$.

\begin{remark} By \eqref{eq:volumeconv}, under $\mathbb{P}^{(\ell)}$ the convergence $ \frac{1}{\ell^{\theta+1/2}} \cdot |\map| \rightarrow    \mathsf{b}_{ \mathbf{q}} \cdot \mathcal{M}_{\theta}^{-}(\infty)$ holds in distribution, where  $ \mathcal{M}_{\theta}^{-}(\infty)$ is the intrinsic area of $ \mathbf{X}_{\theta}^{(1-\theta)}$ under $ \mathbb{P}_{1}$. This convergence actually holds jointly with \eqref{eq:cv2} but do not prove it here to keep the length of the paper reasonable.
\end{remark}

The proof of Theorem \ref{thm:GF} follows the same lines as that of \cite{BCK} and we will only sketch its proof. We first introduce the peeling algorithm that we use to study the geometric structure of the dual of $ \mathbf{q}$-Boltzmann planar maps, and which is adapted from \cite{BCgrowth}.

Let $ \map$ be a  rooted bipartite planar map. Recall that a peeling exploration starts with $ \mathfrak{e}_{0}$, the map consisting of the root face of $\map$ (seen as a simple cycle). Inductively suppose that at step $i \geq 0$, the following hypothesis is satisfied:
\begin{center}
\begin{minipage}{14cm} $(H)$: There exists an integer $h \geq 0$ such all the faces adjacent to the holes of $ \mathfrak{e}_{i}$ are at height $h$ or $h+1$ in $\map$.  Suppose furthermore that the faces adjacent to a same hole in $ \mathfrak{e}_{i}$ and which are at height $h$ form a connected part on the boundary of that hole.
\end{minipage}
\end{center}
We will take the height of an edge of the boundary of a hole of $ \mathfrak{e}_{i}$ to mean the height of its incident face.  If $(H)$ is satisfied by $ \mathfrak{e}_{i}$ the next edge to peel $ \mathcal{L}^\dagger( \mathfrak{e}_{i} )$ is chosen as follows:
\begin{itemize}
\item If all edges on the boundaries of the holes of $ \mathfrak{e}_{i}$ are at height $h$ then $ \mathcal{L}^\dagger( \mathfrak{e}_{i})$ is a deterministic edge on the boundary of one of its hole,
\item Otherwise $ \mathcal{L}^\dagger( \mathfrak{e}_{i})$ is a deterministic edge at height $h$ such that the next edge in clockwise order around its hole is at height $h+1$.
\end{itemize}

\begin{figure}[!h]
 \begin{center}
 \includegraphics[width=0.7\linewidth]{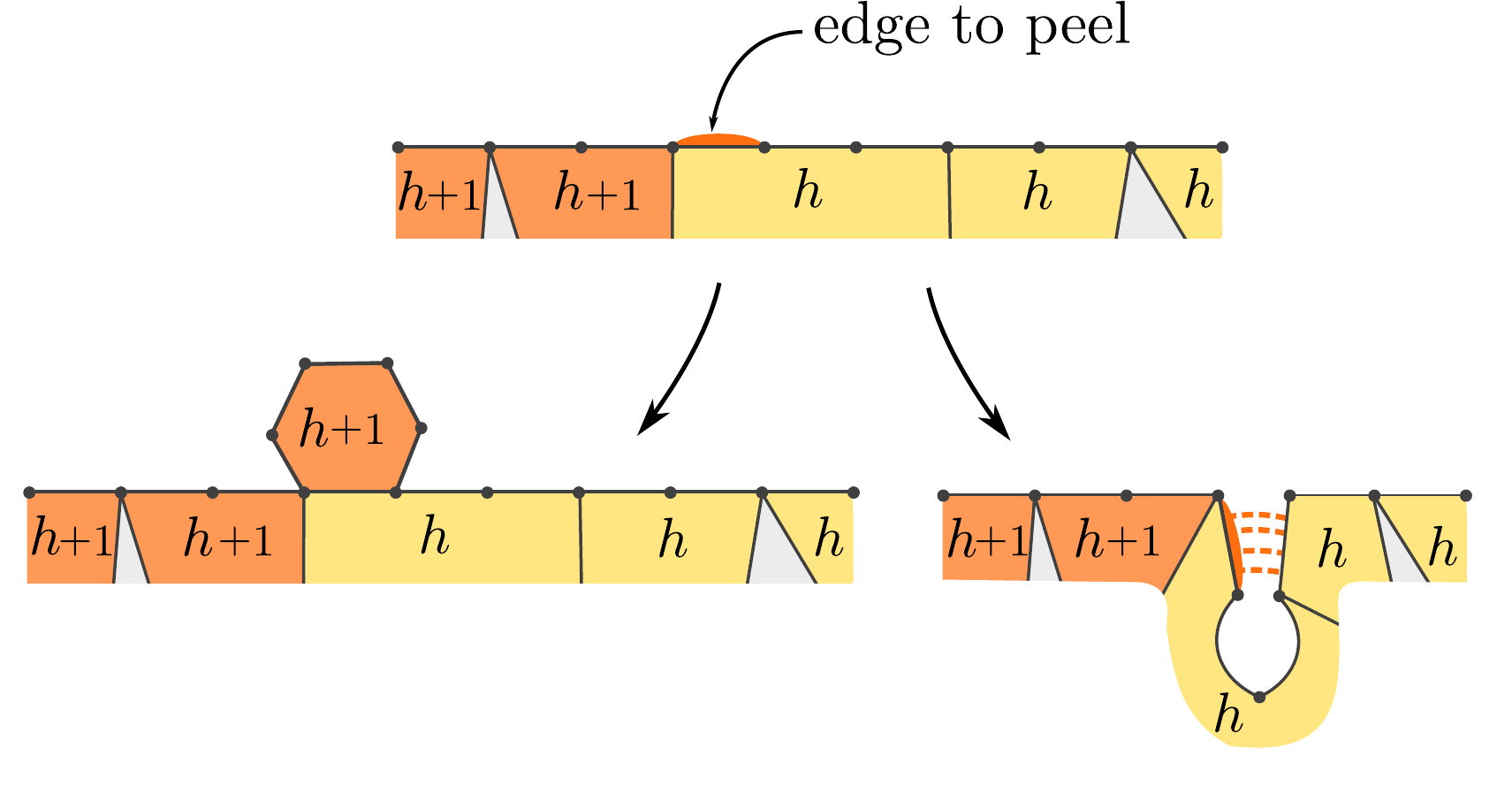}
 \caption{Illustration of the peeling using algorithm $ \mathcal{L}^\dagger$.}
 \end{center}
 \end{figure}

It is easy to check by induction that if one uses the above algorithm starting at step $i={0}$ to peel the edges of $ \map$, then for every $i \geq 0$ the explored map $ \mathfrak{e}_{i}$  satisfies the hypothesis $(H)$ and one can indeed define the peeling exploration of $ \map$ using the algorithm $ \mathcal{L}^\dagger$.  Let us give a geometric interpretation of this peeling exploration. We denote by $ \mathsf{H}( \mathfrak{e}_{i})$ the minimal height in $ \map$ of a face adjacent to one of its holes and let $\theta_{r} = \inf\{ i \geq 0 : \mathsf{H}( \mathfrak{e}_{i}) = r\}$ for $r \geq 0$. We easily prove by induction on $r\geq 0$ that:
$$ \mathfrak{e}_{\theta_{r}} = \mathrm{Ball}^\dagger_{r}( \map).$$

\begin{proof}[Sketch of the proof of  Theorem \ref{thm:GF}.] Let us focus first on the second convergence which is the extension of the convergence proved in \cite{BCK}. We sample a random planar map distributed according to $ \mathbb{P}^{(\ell)}$ and explore its dual metric structure using the above peeling exploration. As in \cite[Sec.~2.6]{BCK}, during this exploration we first track the evolution of the locally largest cycle $ \mathscr{C}_{*}$ starting with the boundary of the map and we freeze all the other cycles when created. By Proposition \ref{prop:peelinggenerallaw} we already know that the scaling limit for the half-perimeter $P_{*}$ of this cycle as a function of the number of peeling steps is given by the process $X_{\theta}^{(-\theta)}$. Our first goal is to prove that the scaling limit of $P_{*}$ in the \emph{height parameter} is given by the process $X_{\theta}^{(1-\theta)}$. This is proved as in \cite[Proposition 12 and Lemma 13]{BCK} using the work \cite{BCgrowth} instead of \cite{CLGpeeling} as the key input. Specifically, if $\tilde{P}_{*}(h)$ is the half-perimeter of the locally largest cycle (that starts with the boundary of the map) when height $h\geq 0$ is reached then under $ \mathbb{P}^{(\ell)}$ we have 
 \begin{eqnarray} \label{eq:cycleheight} \left(\frac{1}{\ell} \tilde{P}_{*}( \lfloor \ell^{\theta-1} \cdot t \rfloor )\right)_{t \geq 0} \xrightarrow[\ell\to\infty]{(d)} \left(X_{\theta}^{(1-\theta)}(  \mathsf{c}_{ \mathbf{q}} t /  \mathsf{a}_{ \mathbf{q}})\right)_{t \geq 0},  \end{eqnarray} in distribution for the Skorokhod $J_{1}$ topology. Once this is granted we perform an exploration with a cutoff $ \varepsilon>0$ exactly as in \cite[Sec.~3.3]{BCK} which consists roughly speaking in exploring the map with the above peeling algorithm but freezing those cycles whose half-perimeters drop below $ \varepsilon \ell$. We then use the Markov property of the underlying random map and  \eqref{eq:cycleheight} to deduce that after performing this cutoff we have convergence of the non-frozen cycles at heights towards the cutoff version of the growth-fragmentation process $ \mathbf{X}_{\theta}^{(1-\theta)}$, see \cite[Corollary 18]{BCK}. The point is that to perform these explorations with cutoff we only need to follow a tight number of locally largest cycles. Finally, we need to control, in the $\ell_{\theta+3/2}$-sense, the error made by the cutoff procedure both on the planar map side (see \cite[Proposition 15]{BCK}) and on the growth-fragmentation side (see \cite[Lemma 22]{BCK}). Here again, the proofs are easily adapted from \cite{BCK} to our case using the discrete and continuous martingales developed in this text rather than those used in \cite{BCK}. 
 
 For the first and third convergence we proceed similarly except that instead of first following the locally largest cycle that starts with the boundary of the map, we first follow the distinguished cycles $ \mathscr{C}_{\infty}$ in the first case and $ \mathscr{C}_{\bullet}$ in the third case. We can then establish the analogs of \eqref{eq:cycleheight} involving respectively the processes $Y^+$ and $Y^-$ associated with $ \mathbf{X}_{\theta}^{(1-\theta)}$ using similar arguments as above together with Proposition \ref{prop:scalingll}. The rest of the proof goes through: we fix $t_{0}>0$ and perform a cut-off exploration to prove that we have convergence of the non-frozen cycles at heights smaller than $ \ell^{\theta-1} t_{0}$ towards the cutoff version of the growth-fragmentation process $ (\mathbf{X}_{\theta}^{(1-\theta)}(s) : 0 \leq s \leq t_{0})$ under $ \mathbb{P}_{1}^+$ and $ \mathbb{P}_{1}^{-}$ respectively. The $\ell^{\theta+3/2}$-control of the error made is then transferred from the previous case using absolute continuity relations between the laws of $( \mathbf{L}(h) : 0 \leq h \leq \ell^{\theta-1} \cdot t_{0})$ under $ \mathbb{P}^{(\ell)},  \mathbb{P}^{(\ell)}_{\bullet}$ and $ \mathbb{P}^{(\ell)}_{\infty}$ and similarly between the laws of $( \mathbf{X}_{\theta}^{(1- \theta)}(s) : 0 \leq s \leq t_{0})$ under $ \mathbb{P}_{1}, \mathbb{P}_{1}^{+}$ and $ \mathbb{P}_{1}^-$. 
 \end{proof}
 
 \bibliographystyle{siam}
 {\small

}
\end{document}